\begin{document}
\setlength{\baselineskip}{16pt}

\parindent 0.5cm
\evensidemargin 0cm \oddsidemargin 0cm \topmargin 0cm \textheight 22.5cm \textwidth 16cm \footskip 2cm \headsep
0cm

\newtheorem{theorem}{Theorem}[section]
\newtheorem{lemma}{Lemma}[section]
\newtheorem{proposition}{Proposition}[section]
\newtheorem{definition}{Definition}[section]
\newtheorem{example}{Example}[section]
\newtheorem{corollary}{Corollary}[section]

\newtheorem{remark}{Remark}[section]

\numberwithin{equation}{section}

\def\p{\partial}
\def\I{\textit}
\def\R{\mathbb R}
\def\C{\mathbb C}
\def\u{\underline}
\def\l{\lambda}
\def\a{\alpha}
\def\O{\Omega}
\def\e{\epsilon}
\def\ls{\lambda^*}
\def\D{\displaystyle}
\def\wyx{ \frac{w(y,t)}{w(x,t)}}
\def\imp{\Rightarrow}
\def\tE{\tilde E}
\def\tX{\tilde X}
\def\tH{\tilde H}
\def\tu{\tilde u}
\def\d{\mathcal D}
\def\aa{\mathcal A}
\def\DH{\mathcal D(\tH)}
\def\bE{\bar E}
\def\bH{\bar H}
\def\M{\mathcal M}

\def\disp{\displaystyle}
\def\undertex#1{$\underline{\hbox{#1}}$}
\def\card{\mathop{\hbox{card}}}
\def\sgn{\mathop{\hbox{sgn}}}
\def\exp{\mathop{\hbox{exp}}}
\def\OFP{(\Omega,{\cal F},\PP)}
\newcommand\JM{Mierczy\'nski}

\newcommand\Q{\ensuremath{\mathbb{Q}}}
\newcommand\Z{\ensuremath{\mathbb{Z}}}
\newcommand\N{\ensuremath{\mathbb{N}}}

\title{Two-species chemotaxis-competition
system with singular sensitivity: Global existence, boundedness, and persistence}
\author{Halil Ibrahim Kurt and Wenxian Shen   \\
Department of Mathematics and Statistics\\
Auburn University\\
Auburn University, AL 36849\\
U.S.A. }

\date{}
\maketitle

\begin{abstract}
This paper is concerned with the following parabolic-parabolic-elliptic chemotaxis system with singular sensitivity and Lotka-Volterra competitive kinetics,
 \begin{equation}
\label{abstract-eq}
\begin{cases}
u_t=\Delta u-\chi_1 \nabla\cdot (\frac{u}{w} \nabla w)+u(a_1-b_1u-c_1v) ,\quad &x\in \Omega\cr
v_t=\Delta v-\chi_2 \nabla\cdot (\frac{v}{w} \nabla w)+v(a_2-b_2v-c_2u),\quad &x\in \Omega\cr
0=\Delta w-\mu w +\nu u+ \lambda v,\quad &x\in \Omega \cr
\frac{\p u}{\p n}=\frac{\p v}{\p n}=\frac{\p w}{\p n}=0,\quad &x\in\p\Omega,
\end{cases}
\end{equation}
where $\Omega \subset \mathbb{R}^N$ is a bounded smooth  domain, and   $\chi_i$, $a_i$, $b_i$, $ c_i$ ($i=1,2$)  and
$\mu,\, \nu, \, \lambda$
 are positive constants. This is the first work on two-species chemotaxis-competition system with singular
sensitivity and  Lotka-Volterra competitive kinetics.
Among others, we prove that  for any given nonnegative initial data $u_0,v_0\in C^0(\bar\Omega)$ with $u_0+v_0\not \equiv 0$, \eqref{abstract-eq} has
a unique globally defined classical solution   $(u(t,x;u_0,v_0),v(t,x;u_0,v_0),w(t,x;u_0,v_0))$ with
$u(0,x;u_0,v_0)=u_0(x)$ and $v(0,x;u_0,v_0)=v_0(x)$ provided that $\min\{a_1,a_2\}$ is large relative to $\chi_1,\chi_2$ and $u_0+v_0$ is not small. Moreover, under the same condition, we prove that
\begin{equation*}
    \limsup_{t\to\infty}
\|u(t,\cdot;u_0,v_0)+v(t,\cdot;u_0,v_0)\|_\infty\le M^*,
\end{equation*}
and
\begin{equation*}
    \liminf_{t\to\infty} \inf_{x\in\Omega}(u(t,x,u_0,v_0)+v(t,x;u_0,v_0))\ge m^*,
\end{equation*}
for some positive constants $M^*,m^*$  independent of $u_0,v_0$,
the latter is referred to as  combined pointwise persistence.

\end{abstract}

\noindent {\bf Key words.} Parabolic-parabolic-elliptic chemotaxis  system,  singular sensitivity,   Lotka-Volterra competitive kinetics, global existence, global  boundedness,  combined mass persistence, combined pointwise persistence.

\medskip

\noindent {\bf 2020  Mathematics subject Classification.} 35K51, 35K57, 35M33, 35Q92, 92C17, 92D25.

\section{Introduction and Main Results}
\label{S:intro}

Chemotaxis refers to the movement of cells or organisms in response to chemicals in their environments, and plays a crucial role in many biological processes  such as immune system response, tumor growth, population dynamics, gravitational collapse, the governing of immune cell migration. Since the pioneering works by Keller and Segel (\cite{ke-se1}, \cite{Keller-1}) on chemotaxis models, a lot of works have been {carried out} on the qualitative properties of various chemotaxis models such as the analysis of global existence, boundedness, blow-up in finite time, and asymptotic behavior of globally defined   solutions, etc.  The reader is referred to \cite{bel-wi,  hillen-painter,Hor} and the references therein for some detailed introduction
into the mathematics of chemotaxis models.

There are a large number of works on various two competing species chemotaxis models. For example,
consider the following two-species chemotaxis system,
\begin{equation}
\label{main-eq0}
\begin{cases}
u_t= \Delta u-  \nabla\cdot (u \chi_1(w) \nabla w)+u(a_1-b_1u-c_1v) ,\quad &x\in \Omega\cr
v_t= \Delta v- \nabla\cdot (v\chi_2(w)\nabla w)+ v(a_2 -b_2v- c_2u),\quad &x\in \Omega\cr
\tau w_t= \Delta w-{\mu w}  +\nu u+ \lambda  v,\quad &x\in \Omega\cr
\frac{\p u}{\p n}=\frac{\p v}{\p n}=\frac{\p w}{\p n}=0,\quad &x\in\p\Omega,
\end{cases}\,
\end{equation}
where $\Omega\subset\R^N$ is a bounded smooth domain,
$a_1,b_1,c_1,a_2,b_2,c_2, \nu,\lambda$ are positive numbers, and $\tau\ge 0$.  Biologically, \eqref{main-eq0} models the evolution of two competitive species subject to a chemical substance, which is produced by the two species themselves.
Here the unknown functions $u(t,x)$ and $v(t,x)$ represent the population densities
of two competitive   biological species and  $w(t,x)$  represents  the concentration of
the chemical substance.  The terms $u(a_1-b_1u-c_1v)$ and
$v(a_2-b_2v-c_2u)$ are referred to as Lotka-Volterra competitive terms.  The parameter $\mu$ is the degradation rate of the chemical substance and $\nu$ and $\lambda$ are the production rates of the chemical substance by the species $u$ and $v$, respectively.  $\tau\ge 0$ is related to the diffusion rate of the chemical substance.  The functions  $\chi_1(w)$ and $\chi_2(w)$ reflect the strength of the chemical substance on the movements of two species, and are referred to as chemotaxis sensitivity functions or coefficients.

When $\chi_1(w)\equiv \chi_1 >0$, $\chi_2(w)\equiv \chi_2>0 $,  and $\tau=0$,   it is known that  if $N\le 2$, or $N\ge 3$  and $\chi_i$ is  small relative to $b_i$ and $c_i$ ($i=1,2$), then for  any nonnegative initial data $u_0,v_0\in C^0(\bar\Omega)$, system \eqref{main-eq0} possesses a unique globally defined classical solution $(u(t,x;u_0,v_0),v(t,x;u_0,v_0), w(t,x;u_0,v_0))$ with
$u(0,x;u_0,v_0)=u_0(x)$ and $v(0,x;u_0,v_0)=v_0(x)$ (see  \cite{isra,  issh3, limuwa, st-te-wi, te-wi} and references therein).
Moreover,   the large time behaviors  of  globally defined classical solutions of \eqref{main-eq0} such as competitive exclusion, coexistence, stabilization, etc., are   investigated in  \cite{bllami, isra, issh2, issh3, miz3, miz4,  st-te-wi, te-wi}, etc.

When $\chi_1(\omega)\equiv \chi_1 >0$, $\chi_2(w)\equiv \chi_2>0 $ and $\tau=1$,
it is proved that, if
$N\le 2,$ or $N\ge 3$ and   $\chi_1$ and $\chi_2$ are small relative to other parameters in \eqref{main-eq0}, then
for any nonnegative initial data $u_0,v_0\in C(\bar\Omega)$, $w_0\in W^{1,\infty}(\Omega)$,  system \eqref{main-eq0} possesses a unique globally defined classical solution $(u(t,x;u_0,v_0),v(t,x;u_0,v_0), w(t,x;u_0,v_0))$ with
$u(0,x;u_0,v_0)=u_0(x)$, $v(0,x;u_0,v_0)=v_0(x)$, and $w(0,x;u_0,v_0)=w_0(x)$ (see \cite{ba-wi, limuwa, zhni}, etc.).
Moreover, the large-time behaviors of globally defined classical solutions are  investigated  in \cite{ba-wi, zhni}, etc.
  We refer to the readers to the articles \cite{Hor2, zhni} for the further details.

The aim of current paper is to investigate the global existence, boundedness, and combined persistence  of  classical solutions of
\eqref{main-eq0} with $\chi_1(w)=\frac{\chi_1}{w}$ and $\chi_2(w)=\frac{ \chi_2}{w} $
for some positive constants $\chi_1$ and $\chi_2$, and $\tau=0$, that is,    the following parabolic-parabolic-elliptic chemotaxis system with singular sensitivity and   Lotka-Volterra competitive kinetics,
\begin{equation}
\label{main-eq}
\begin{cases}
u_t=\Delta u-\chi_1 \nabla\cdot (\frac{u}{w} \nabla w)+u(a_1-b_1u-c_1v) ,\quad &x\in \Omega\cr
v_t=\Delta v-\chi_2 \nabla\cdot (\frac{v}{w} \nabla w)+v(a_2-b_2v-c_2u),\quad &x\in \Omega\cr
0=\Delta w-\mu w +\nu u+ \lambda v,\quad &x\in \Omega \cr
\frac{\p u}{\p n}=\frac{\p v}{\p n}=\frac{\p w}{\p n}=0,\quad &x\in\p\Omega.
\end{cases}
\end{equation}
 It  is seen that  the chemotaxis sensitivities $\frac{\chi_i}{w}$  ($i=1,2$) are  singular near $w=0$,   reflecting  an inhibition of chemotactic migration at high signal concentrations. Such a sensitivity  describing  the living organisms' response to the chemical signal  was derived by the Weber-Fechner law (see \cite{Keller-1}).

We consider classical solutions of \eqref{main-eq} with initial functions
$u_0,v_0\in C^0(\bar\Omega)$ with $u_0\ge 0$, $v_0\ge 0$, and  $\int_\Omega(u_0+v_0)>0$. Note that for such initial functions,
 if $v_0=0$ (resp. $u_0=0$), then $v(t,x)\equiv 0$
(resp. $u(t,x)\equiv 0$) on the existence interval. Note also that if $v(t,x)\equiv 0$, then \eqref{main-eq} becomes
\begin{equation}
\label{main-eq1}
\begin{cases}
u_t=\Delta u-\chi_1 \nabla\cdot (\frac{u}{w} \nabla w)+u(a_1-b_1u) ,\quad &x\in \Omega\cr
0=\Delta w-\mu w +\nu u,\quad &x\in \Omega \cr
\frac{\p u}{\p n}=\frac{\p w}{\p n}=0,\quad &x\in\p\Omega,
\end{cases}
\end{equation}
and if $u(t,x)\equiv 0$, then \eqref{main-eq} becomes
\begin{equation}
\label{main-eq2}
\begin{cases}
v_t=\Delta v-\chi_2 \nabla\cdot (\frac{v}{w} \nabla w)+v(a_2-b_2v),\quad &x\in \Omega\cr
0=\Delta w-\mu w + \lambda v,\quad &x\in \Omega \cr
\frac{\p v}{\p n}=\frac{\p w}{\p n}=0,\quad &x\in\p\Omega.
\end{cases}\,
\end{equation}
Systems \eqref{main-eq1} and \eqref{main-eq2} are essentially the same, and  are referred to as one species chemotaxis models with logistic source and singular sensitivity. They
 have  been studied in many works  (see \cite{Bil,Bla, CaWaYu, FuWiYo1,FuWiYo,HKWS,HKWS2,NaSe}, etc.). Let us briefly review  some known results for
one species chemotaxis models with logistic source and singular sensitivity.

 Consider  \eqref{main-eq1} with $a_1 = b_1 \equiv 0$ and $\mu=\nu=1$. Fujie,  Winkler, and  Yokota  in \cite{FuWiYo} proved the global existence and  boundedness of
positive classical   solutions  when $\chi_1<\frac{2}{N}$ and $N\ge 2$.  More recently, Fujie and Senba in \cite{FuSe1} proved
 the global existence and  boundedness of
classical positive  solutions  for the case of $N=2$ for any $\chi_1>0$. The existence of finite-time blow-up is then completely ruled out for any $\chi_1>0$  in the case $N=2$.  When $N\ge 3$, finite-time blow-up may occur (see \cite{NaSe}).

Consider  \eqref{main-eq1} with $a_1,b_1>0$.
Central questions include  whether the logistic source prevents  the occurrence  of    finite-time blow-up
in \eqref{main-eq1} (i.e. any positive solution exists globally);  if so,
whether  any globally defined positive solution is bounded, and
what is the long time behavior of globally defined bounded positive solutions, etc.
When $N=2$ and $a_1, b_1$ are positive constants,
it is proved in \cite{FuWiYo1} that finite-time blow-up does not occur  (see \cite[Theorem 1.1]{FuWiYo1}), and moreover,
if
\begin{equation}
\label{intro-assumption}
    a_1>
\begin{cases}
\frac{\mu \chi_1^2}{4}, &\text{if $0< \chi_1 \leq 2$}\\
\mu(\chi_1-1), &\text{if $\chi_1>2$},
\end{cases}
\end{equation}
then any globally defined positive solution is bounded.
Under some additional assumption, it is proved in \cite{CaWaYu} that  the constant solution
$(\frac{a}{b},\frac{\nu}{\mu}\frac{a}{b})$  is exponentially stable
(see \cite[Theorem 1]{CaWaYu}).
Very recently, among others,  we proved that, in any space dimensional setting,    a positive classical solution $(u(t,x),w(t,x))$ of \eqref{main-eq1} exists globally and stays bounded provided that $a_1$ is large  relative  to $\chi_1$ and the initial function { $u(0,x)$}  is not too small (see \cite[Theorem 1.2(3)]{HKWS}. Recall that, with $\chi_1\nabla \cdot(\frac{u}{w}\nabla w)$ in \eqref{main-eq1} being replaced by
$\chi_1\cdot\nabla (u\nabla w)$, then a positive classical solution $(u(t,x),w(t,x))$ exists globally provided that $b$ is large relative to $\chi_1$ { (see \cite[Theorem 2.5]{TeWi0}).}
Hence \cite[Theorem 1.2(3)]{HKWS}  reveals some  possible interesting  difference between singular and regular chemotaxis sensitivities.
 We also proved that the
  globally defined positive solutions  of \eqref{main-eq1} are away from $0$ provided that $a_1$ is large  relative  to $\chi_1$ and the initial functions are not too small
(see \cite[Theorem 1.3]{HKWS2}).

However, as far as we know,  there is little study on  the two-species
chemotaxis system \eqref{main-eq}.
It is the aim of this paper  to investigate the global existence, boundedness, and combined persistence  of  classical solutions of
\eqref{main-eq}.

\begin{definition}
\label{solu-def}For given  $u_0(\cdot)\in C^0(\bar\Omega)$ and $v_0(\cdot)\in C^0(\bar\Omega)$ satisfying that
$u_0\ge 0$, $v_0\ge 0$, and $\int_\Omega (u_0(x)+v_0(x))dx>0$,
 we say $(u(t,x),v(t,x),w(t,x))$ is a {\rm positive classical solution} of \eqref{main-eq} on $(0,T)$ for some $T\in (0,\infty]$ with initial condition  $(u(0,x),v(0,x))=(u_0(x),v_0(x))$ if
 \begin{equation}
\label{positive-solu-eq}
{u(t,x;u_0,v_0)+v(t,x;u_0,v_0)>0,\,\, w(t,x;u_0,v_0)>0\quad \forall t\in (0,T),\,\, x\in\bar\Omega,}
\end{equation}
\begin{equation*}
u(\cdot,\cdot),v(\cdot,\cdot)\in  C([0,T)\times\bar\Omega )\cap C^{1,2}(  (0,T)\times\bar\Omega),\quad
w(\cdot,\cdot)\in C^{0,2}((0,T)\times \bar\Omega),
\end{equation*}
\begin{equation*}
\lim_{t\to 0+}\|u(t,\cdot)-u_0(\cdot)\|_{C^0(\bar\Omega)}=0,\quad \lim_{t\to 0+}\|v(t,\cdot)-v_0(\cdot)\|_{C^0(\bar\Omega)}=0,
\end{equation*}
and $(u(t,x),v(t,x),w(t,x))$ satisfies \eqref{main-eq} for all $(t,x)\in (0,T)\times \Omega$.
\end{definition}

 The following proposition on the local existence of classical solutions of \eqref{main-eq} can be proved by the similar arguments as those in \cite[Lemma 2.2]{FuWiYo1}.

\begin{proposition} [Local existence]
\label{local-existence-prop}
{For given  $u_0(\cdot)\in C^0(\bar\Omega)$ and $v_0(\cdot)\in C^0(\bar\Omega)$ satisfying that
$u_0\ge 0$, $v_0\ge 0$, and $\int_\Omega (u_0(x)+v_0(x))dx>0$,  there exists $T_{\max}(u_0,v_0)\in (0,\infty]$
such that  \eqref{main-eq} has a unique positive  classical solution, denoted by $(u(t,x;u_0,v_0)$, $v(t,x;u_0,v_0)$, $w(t,x;u_0,v_0))$,  on $(0,T_{\max}(u_0,v_0))$ with initial condition $(u(0,x;u_0,v_0),v(0,x;u_0,v_0))=(u_0(x),v_0(x))$.  { Moreover, if $\int_\Omega u_0(x)dx>0$ and $\int_\Omega v_0(x)dx>0$, then
$$
u(t,x;u_0,v_0)>0\quad {\rm and}\quad v(t,x;u_0,v_0)>0\quad \forall\, t\in (0,T_{\max}(u_0,v_0)),\,\, x\in\bar\Omega.
$$ }
If $T_{\max}(u_0,v_0)< \infty,$ then either }
\begin{equation*}
\limsup_{t \nearrow T_{\max}(u_0,v_0)} \left(\left\| u(t,\cdot;u_0,v_0)+v(t,\cdot;u_0,v_0) \right\|_{C^0(\bar \Omega)} \right) =\infty,
\end{equation*}
or
\begin{equation}
\label{local-infty-2}
    \liminf_{t \nearrow T_{\max}(u_0,v_0)} \inf_{x \in \Omega} w(t,x;u_0,v_0) =0.
\end{equation}
\end{proposition}
We will focus on the following problems in this paper: whether  $(u(t,x;u_0,v_0),v(t,x;u_0,v_0)$, $w(t,x;u_0,v_0))$  exists globally, i.e., $T_{\max}(u_0,v_0)=\infty$, for any $u_0(\cdot)\in C^0(\bar\Omega)$ and $v_0(\cdot)\in C^0(\bar\Omega)$ satisfying that
$u_0\ge 0$, $v_0\ge 0$, and
$\int_\Omega (u_0(x)+v_0(x))dx>0$; If $T_{\max}(u_0,v_0)=\infty$, whether  $(u(t,x;u_0,v_0),v(t,x;u_0,v_0)$, $w(t,x;u_0,v_0))$ is bounded above and stays away from $0$.

We point out that in the study of global existence and boundedness of classical solutions of  one species chemotaxis model \eqref{main-eq1} with singular sensitivity, it is crucial to prove the boundedness  of $(\int_\Omega u(t,x)dx)^{-1}$ and  the boundedness  of $\int_\Omega u^p(t,x)dx$ for some $p\gg  1$.
The novel idea discovered in this paper for the study of global existence and boundedness of classical solutions of
 \eqref{main-eq} is to prove the boundedness of $(\int_\Omega(u(t,x)+v(t,x))dx)^{-1}$ and the boundedness of
$\int_\Omega(u(t,x)+v(t,x))^p dx$ for $p\gg 1$.  Note that for \eqref{main-eq},
$(\int_\Omega u(t,x)dx)^{-1}$ (resp. $(\int_\Omega v(t,x)dx)^{-1}$) may not be bounded.
Note also that the boundedness of $(\int_\Omega(u(t,x)+v(t,x))dx)^{-1}$ is strongly related to the boundedness of $\int_\Omega(u(t,x)+v(t,x))^{-q}dx)$ for some $q>0$.

In the rest of the introduction, we introduce some standing notations { and assumptions} in subsection 1.1,  and state the main results of the paper and provide some remarks on the main results in subsection 1.2.

\subsection{Notations and assumptions}

In this subsection, we introduce some standing notations { and assumptions} to be used throughout the paper.

Observe that for given  $u_0(\cdot)\in C^0(\bar\Omega)$ and $v_0(\cdot)\in C^0(\bar\Omega)$ satisfying that
$u_0\ge 0$, $v_0\ge 0$, and $\int_\Omega (u_0(x)+v_0(x))dx>0$, if $v_0\equiv 0$ (resp. $u_0\equiv 0$), then
$v(t,x;u_0,v_0)\equiv 0$  (resp. $u(t,x;u_0,v_0)\equiv 0$)  for $t\in (0,T_{\max}(u_0,v_0))$
and $(u(t,x;u_0),w(t,x;u_0)):=(u(t,x;u_0,0),w(t,x;u_0,0))$ (resp.
$v(t,x;v_0)$, $w(t,x;v_0)):=(v(t,x;0,v_0)$, $w(t,x;0,v_0))$)  is the  solution of \eqref{main-eq1} (resp. \eqref{main-eq2})
with initial condition $u(0,x;u_0)=u_0(x)$ (resp. $v(0,x;v_0)=v_0(x)$). Hence,
throughout the rest of this paper, we consider classical solutions of \eqref{main-eq} with the initial function $u_0(x), v_0(x)$  satisfying
\begin{equation}
\label{initial-cond-eq}
u_0,v_0 \in C^0(\bar{\Omega}), \quad u_0, v_0 \ge 0, \quad  {\rm and} \quad  \int_\Omega u_0 >0,\,\,\, \int_\Omega v_0>0.
\end{equation}

Let
\begin{equation*}
\begin{cases}
a_{\min}=\min\{a_1,a_2\},\quad & a_{\max}=\max\{a_1,a_2\}\cr
b_{\min}=\min\{b_1,b_2\}, \quad  & b_{\max}=\max\{b_1,b_2\}\cr
c_{\min}=\min\{c_1,c_2\},\quad & c_{\max}=\max\{c_1,c_2\}.
\end{cases}
\end{equation*}
For given $B>0$ and $\beta\not = { \chi_2-B}$, let
\begin{equation}
\label{function-f-eq}
f(\mu,\chi_1,\chi_2, \beta, B)=\mu(B+\beta) \Big(1+\frac{{B}({ \chi_2-B} -\beta)^2+(\chi_1-\chi_2)^2\beta}{4B\beta}\Big).
\end{equation}
For fixed $\mu>0$, $\chi_1>0$ and $\chi_2>0$, let
\begin{align*}
\chi_1^*(\mu,\chi_1,\chi_2)=\inf\Big\{f(\mu,\chi_1,\chi_2,\beta,B)\,|\, &B>0, \beta>0,\beta\not ={ \chi_2-B}\Big\}.
\end{align*}
Similarly, let
\begin{align*}
\chi_2^*(\mu,\chi_1,\chi_2)=\inf\Big\{f(\mu,\chi_2,\chi_1,\beta,B)\,|\, &B>0, \beta>0,\beta\not ={ \chi_1-B}\Big\}.
\end{align*}
Let
\begin{equation}
\label{bound-for-a-eq3}
{ \chi^*(\mu,\chi_1,\chi_2)}=\min\{\chi_1^*(\mu,\chi_1,\chi_2),\chi_2^*(\mu,\chi_1,\chi_2)\},
\end{equation}
{and
\begin{equation*}
\chi^{**}(\mu,\chi_1,\chi_2)=\begin{cases}
 { 2\mu\,  \max\{\chi_1,\chi_2\} }\quad & {\rm if}\; (\chi_1-\chi_2)^2\le { \max\{ 4\chi_1,4\chi_2\} }\cr
{ \mu \, {  \max}}\{\chi_1,\chi_2\}+\frac{\mu (\chi_1-\chi_2)^2}{4} \quad &{\rm if}\; (\chi_1-\chi_2)^2>{ \max\{4\chi_1,4\chi_2\}}.
\end{cases}
\end{equation*} }
Note that
$$
\chi^*(\mu,\chi_1,\chi_2)\le \chi^{**}(\mu,\chi_1,\chi_2)
$$
(see \eqref{main-assumption-1}).
The numbers $\chi^*(\mu,\chi_1,\chi_2)$ and {$\chi^{**}(\mu,\chi_1,\chi_2)$} will be used to get lower bounds for $\int (u(t,x;u_0,v_0)+v(t,x;u_0,v_0)$, which are essential in the proofs of global existence of $(u(t,x;u_0,v_0)$, $v(t,x;u_0,v_0),w(t,x;u_0,v_0))$.
{Let
\begin{equation*}
q^*=q_{\chi_1,\chi_2}=\begin{cases}
1\quad &{\rm if}\; (\chi_1-\chi_2)^2\le {\max} \{4\chi_1,4\chi_2\},\cr\cr
{  \max}\Big\{\frac{4\chi_1}{(\chi_1-\chi_2)^2},\frac{4\chi_2}{(\chi_1-\chi_2)^2}\Big\}\quad &{\rm if}\; (\chi_1-\chi_2)^2>{ \max}\{4\chi_1,4\chi_2\},
\end{cases}
\end{equation*}
and
\begin{equation*}
\chi_{1,2}= \min\Big\{\chi_1^2+(\chi_1-\chi_2)^2,\chi_2^2+(\chi_1-\chi_2)^2\Big\},
\end{equation*}
{{ as well as
\begin{equation*}
C_{\chi_1,\chi_2}=\begin{cases}
1\quad &{\rm if}\; \chi_{1,2}\ge 1\cr
\sqrt{\chi_{1,2}}\quad &{\rm if}\; \chi_{1,2}<1.
\end{cases}
\end{equation*}}

{Note that there is $\delta_0>0$ such that for any $(u_0,v_0)$ satisfying \eqref{initial-cond-eq},
\begin{equation}
\label{w-lower-bound-eq1}
    w (t,x;u_0,v_0)\ge \delta_0  \int_{\Omega} (u(t,x;u_0,v_0) + v(t,x;u_0,v_0)) >0 \quad \forall\, t\in [0,T_{\max}(u_0,v_0)),\,\, x\in\Omega
\end{equation}
(see  Lemma  \ref{pre-lm-1}).
Note also that, for any  $p \ge  3$ and $p-\sqrt{2p-3} < k <  p+\sqrt{2p-3}$,  there are $M(p,k)>0$ and $\tilde M(p,k)>0$ such that
 \begin{align}
\label{w-upper-bound-eq1}
   \int_{\Omega} \frac{|\nabla w(t,x;u_0,v_0)|^{2p}}{w^{k}(t,x;u_0,v_0)} \leq & M(p,k)\int_{\Omega} \frac{(\nu u(t,x;u_0,v_0)+\lambda v(t,x;u_0,v_0))^p}{w^{k-p}(t,x;u_0,v_0)} \nonumber \\
&+ \tilde M(p,k) \int_{\Omega} w^{2p-k}(t,x;u_0,v_0)\quad \forall\,\, t \in (0, T_{\rm max}(u_0,v_0))
\end{align}
(see Proposition \ref{new-main-prop2}).
}

At some places, we impose the following conditions  on the parameters in  \eqref{main-eq}
and  on $(u_0,v_0)$.

\medskip

\noindent {\bf (H1)} \, {
${\displaystyle   a_{\min}>\chi^{**}(\mu,\chi_1,\chi_2)+\frac{{\big(\chi_{1,2} \cdot \max\{\nu,\lambda\} \cdot{ N}\big)^{q^*}} \big(M(N+1,N+1)\big)^{\frac{q^*}{N+1}}  \big(b_{\max}+c_{\max}\big)  a_{\max}^{1-q^*} }{(2 \delta_0|\Omega| { C_{\chi_1,\chi_2}})^{q^*}\cdot  \min\{b_{\min},c_{\min}\}}.}
$}

\medskip

\noindent {\bf (H2)} $  (u_0,v_0)$ {satisfies}  \eqref{initial-cond-eq} and  there is $\tau_0\in [0,T_{\max}(u_0,v_0))$ such that
\begin{equation}
\label{new-initial-cond-eq1}
\int_\Omega \big(u+v\big)^{-1}(\tau_0,x;u_0,v_0)\le  \frac{(b_{\max}+c_{\max})|\Omega|}{\big(a_{\min}-\chi^{**}(\mu,\chi_1,\chi_2)\big)\cdot {C_{\chi_1,\chi_2}}}
\end{equation}
in the case that  $ (\chi_1-\chi_2)^2\le { \max\{4\chi_1,4\chi_2\}}$, and
\begin{equation}
\label{new-initial-cond-eq2}
\begin{cases}
 \int_\Omega\big (u+v\big)(\tau_0,x;u_0,v_0)\le   \frac{a_{\max}|\Omega|}{\min\{b_{\min},c_{\min}\} \cdot { C_{\chi_1,\chi_2}^{\frac{q^*}{1-q^*}}}},     \cr\cr
 \int_\Omega\big(u+v\big)^{-q^*}(\tau_0,x;u_0,v_0) \le  \frac{(b_{\max}+c_{\max})|\Omega|\big(a_{\max}|\big)^{1-q^*}}{\big(a_{\min}-\chi^{**}(\mu,\chi_1,\chi_2)\big)\cdot \big(\min\{b_{\min},c_{\min}\}\big)^{1-q^*} \cdot { C_{\chi_1,\chi_2}^{q^*}}}
\end{cases}
\end{equation}
 in the case that  $ (\chi_1-\chi_2)^2>{ \max}\{{ 4}\chi_1,4\chi_2\}$.

\smallskip

Note that  the assumption  {\bf (H1)} indicates that $a_1,a_2$ are large relative to $\chi_1,\chi_2$,   which  would prevent $w$ becomes too small as time evolutes and is a biologically {meaningful} condition. { We point out that, when $\chi_1,\chi_2\to 0$, {\bf (H1)} becomes
$$
a_{\min}>0,
$$
which always holds.}
 The assumption  {\bf (H2)} includes implicitly the condition $a_{\min}>\chi^{**}(\mu,\chi_1,\chi_2)$.  The condition \eqref{new-initial-cond-eq1}    indicates that  $u_0+v_0$ is not  small   in the case that $\chi_1$ and $\chi_2$ are close in the sense  that
$(\chi_1-\chi_2)^2\le  { \max}\{4\chi_1,4\chi_2\}$.
 The condition   \eqref{new-initial-cond-eq2} indicates that  $u_0+v_0$ is neither  big nor  small   in the case that $\chi_1$ and $\chi_2$ are not close in the sense that $(\chi_1-\chi_2)^2>{ \max}\{4\chi_1,4\chi_2\}$.
Both  \eqref{new-initial-cond-eq1} and  \eqref{new-initial-cond-eq2}  would  also prevent $w$ becomes too small as time evolutes and are biologically meaningful conditions.
We point out that when $(\chi_1-\chi_2)^2\le { \max}\{4\chi_1,4\chi_2\}$, it is not required that $u_0+v_0$ is not big.
{We also point out that, when $0<\chi_1<1$ and $0<\chi_2<1$,  we always have
$$
(\chi_1-\chi_2)^2\le \max\{4\chi_1,\chi_2\}.
$$
Hence, when $0<\chi_1<1$ and $0<\chi_2<1$, {\bf (H2)} becomes
\eqref{new-initial-cond-eq1}, and when $\chi_1,\chi_2\to 0$, \eqref{new-initial-cond-eq1}
becomes
$${ \int_\Omega (u+v)^{-1}(\tau_0,u_0,v_0)<\infty}
$$
for some $\tau_0>0$, which always holds.}
}

\subsection{Main results and remarks}

In this subsection, we state the main results of the paper and provide some remarks on the main results.

Observe that, for any given $u_0,v_0$ satisfying \eqref{initial-cond-eq},
by the third equation in \eqref{main-eq},
$$
\mu\int_\Omega w(t,x;u_0,v_0)dx=\int_\Omega (\nu u(t,x;u_0,v_0)+\lambda v(t,x;u_0,v_0))dx
$$
and
\begin{align*}
\min\{\nu,  \lambda\}\int_\Omega (u(t,x;u_0,v_0)+v(t,x;u_0,v_0))dx&\le \int_\Omega (\nu u(t,x;u_0,v_0)+\lambda v(t,x;u_0,v_0))dx\\
&\le\max\{\nu,\lambda\}\int_\Omega (u(t,x;u_0,v_0)+v(t,x;u_0,v_0))dx
\end{align*}
for any $t\in (0,T_{\max}(u_0,v_0))$. Hence $\liminf_{t\nearrow T_{\max}(u_0,v_0)}\int_\Omega (u(t,x;u_0,v_0)+v(t,x;u_0,v_0))dx=0$ implies that \eqref{local-infty-2} holds. On the other hand,  by \eqref{w-lower-bound-eq1},
$$
    \liminf_{t \nearrow T_{\max}(u_0,v_0)} \inf_{x \in \Omega} w(t,x;u_0,v_0) =0\,\,  {\rm iff}
\,\,  \liminf_{t\nearrow T_{\max}(u_0,v_0)}\int_\Omega (u(t,x;u_0,v_0)+v(t,x;u_0,v_0))dx=0.
$$

The first main theorem of the current  paper  is on the lower bounds of the combined mass $\int_\Omega(u(t,x;u_0,v_0)+v(t,x;u_0,v_0))dx$
on any bounded subinterval of $(0,T_{\max}(u_0,v_0))$, which would provides the upper bounds of $\big(\int_\Omega(u(t,x;u_0,v_0)+v(t,x;u_0,v_0))dx\big)^{-1}$ on bounded subintervals of $(0,T_{\max}(u_0,v_0))$.

\begin{theorem}[Local lower bound of the combined mass]
\label{new-main-thm1}
For any $T\in (0,\infty)$ and  any $u_0,v_0$ satisfying \eqref{initial-cond-eq},
\begin{equation}
\label{mass-persistence-eq1}
\inf_{0\le t<\min\{T, T_{\max}(u_0,v_0)\}}\int_\Omega \big(u(t,x;u_0,v_0)+v(t,x;u_0,v_0)\big)dx>0.
\end{equation}
\end{theorem}

\begin{remark}
\label{rk-1} {\rm
\begin{itemize}
\item[(1)]
By Proposition \ref{local-existence-prop},   \eqref{w-lower-bound-eq1} and \eqref{mass-persistence-eq1}, if $T_{\max}(u_0,v_0)<\infty$, then  we must have
\begin{equation*}
\limsup_{t \nearrow T_{\max}(u_0,v_0)} \left(\left\| u(t,\cdot;u_0,v_0)+v(t,\cdot;u_0,v_0) \right\|_{C^0(\bar \Omega)} \right) =\infty.
\end{equation*}

\item[(2)] If
\begin{equation*}
\inf_{0\le t<\min\{T, T_{\max}(u_0,v_0)\}}\int_\Omega u(t,x;u_0,v_0)dx>0
\end{equation*}
 and
\begin{equation*}
\inf_{0\le t<\min\{T, T_{\max}(u_0,v_0)\}}\int_\Omega v(t,x;u_0,v_0)dx>0,
\end{equation*}
then  \eqref{mass-persistence-eq1} holds. But the converse may not be true because  competitive exclusion may occur in \eqref{main-eq}, which will be studied somewhere else.

\item[(3)] {It will be proved that $T_{\max}(u_0,v_0)=\infty$  provided that    {\bf (H1)}  and {\bf (H2)} hold (see Theorem \ref{new-main-thm4}).}
\end{itemize}}
\end{remark}

The second  main theorem  of the current  paper is on the local  {$L^p$- and}  $C^\theta$-boundedness of positive classical solutions of \eqref{main-eq}.

\begin{theorem} [Local  { $L^p$- and} $C^\theta$-boundedness]
\label{new-main-thm2}
$\quad$

\begin{itemize}

\item[(1)]   (Local $L^p$-boundedness)
 Assume that  {\bf (H1)} holds. There are $p^*>\max\{2,N\}$, $\tilde p^*>1$,  and  $M_1> 0$  such that  for any $(u_0,v_0)$ satisfies {\bf (H2)},  there holds
\vspace{-0.05in} \begin{align*}
\int_\Omega (u+v)^{p}&(t,x;u_0,v_0)dx\le  e^{-(t-{ \tau_0}) }\int_\Omega (u+v)^{p}(\tau_0,x;u_0,v_0)dx \\
& +M_1 e^{-(t-\tau_0)}(t-\tau_0)\Big(\int_\Omega(u+v)^{p^*}(\tau_0,x;u_0,v_0)\Big)^{\tilde p^*}+M_1,\,\, \forall\, \tau_0<t<T_{\max}(u_0,v_0),
\end{align*}
where $p=p^*$ or $2p^*$.

\item[(2)] (Local $C^\theta$-boundedness) For  any $p>2N$ and  $0<\theta<1-\frac{2N}{p}$,  there are $M_2>0$, $\beta>0$, and $\gamma>0$ such that for
 any   { $(u_0,v_0)$ satisfies \eqref{initial-cond-eq} and $T\in (0,\infty)$},  there holds
\begin{align}
\label{new-infinity-bdd-eq0}
&\|u(t,\cdot;u_0,v_0)+v(t,\cdot;u_0,v_0)\|_{C^\theta(\bar\Omega)}\nonumber\\
&\le  M_2 \Big[ (t-\tau)^{-\beta} e^{-\gamma (t-\tau)} \|u(\tau,\cdot;u_0,v_0)+v(\tau,\cdot;u_0,v_0)\|_{L^p}\nonumber\\
&\,\,\,\,+\frac{\displaystyle \sup_{\tau \le t <\hat T}\|u(t,\cdot;u_0,v_0)+
v(t,\cdot;u_0,v_0)\|_{L^p}^2}{\displaystyle \inf_{\tau \le t< \hat T, x\in\Omega} w(t,x;u_0,v_0)}\nonumber\\
&\,\,  \,\,+\sup_{\tau \le t<\hat T}\|u(t,\cdot;u_0,v_0)+v(t,\cdot;u_0,v_0)\|_{L^p}+1\Big]
\end{align}
for any $0<\tau<t<\hat T=\min\{T,T_{\max}(u_0,v_0)\}$.
\end{itemize}
\end{theorem}

\begin{remark}
\label{rk-2}
$\quad$
{\rm
\begin{itemize}
\item[(1)]
 By Theorem \ref{new-main-thm2}{(2)}, if $\inf_{x\in \Omega} w(t,x;u_0,v_0)$ is bounded away from zero on $(0, T_{\max}(u_0,v_0))$ {and  $\int_\Omega(u+v)^p(t,x;u_0,v_0)dx$ is bounded on $(0,T_{\max}(u_0,v_0))$ for some $p>2N$} ,
then $\|u(t,\cdot;u_0,v_0)+v(t,\cdot;u_0,v_0)\|_{C^\theta(\bar\Omega)}$ is bounded on $[\tau,T_{\max}(u_0,v_0))$ for some $0<\theta<1$ and any $0<\tau<T_{\max}(u_0,v_0)$.

\item[(2)]
By \eqref{w-lower-bound-eq1}, $w(t,x;u_0,v_0)$ is bounded below by $\int_\Omega (u(t,x;u_0,v_0)+v(t,x;u_0,v_0))dx$.
By Theorems \ref{new-main-thm1} and \ref{new-main-thm2}{(2)},  $\|u(t,\cdot;u_0,v_0)+v(t,\cdot;u_0,v_0)\|_{C^\theta (\bar\Omega)}$ is bounded on $[\tau,\hat T)$ for some $\theta\in (0,1)$,  any $T\in (0,\infty)$,  and  any $0<\tau<\hat T:=\min\{T,T_{\max}(u_0,v_0)\}$,
which plays an important role in the study of global existence of classical solutions of \eqref{main-eq}.
\end{itemize}}
\end{remark}

The third main theorem  of the current paper  is on  global existence of classical solutions of \eqref{main-eq}.

\begin{theorem} [Global existence]
\label{new-main-thm3}
{ Assume that {\bf (H1)} holds.
For   any $u_0,v_0$ satisfying {\bf (H2)}, the solution  $(u(t,x;u_0,v_0),v(t,x;u_0,v_0),w(t,x;u_0,v_0))$ exists globally, that is, $$T_{\max}(u_0,v_0)=\infty.$$}
\end{theorem}

\begin{remark}
\label{rk-3}  {\rm The proof of Theorem \ref{new-main-thm3} relies on Theorems \ref{new-main-thm1} and \ref{new-main-thm2}.  The proof of Theorem \ref{new-main-thm2}(1) replies on estimates on the lower bounds of $w(t,x;u_0,v_0)$, which relies on estimates on the upper bounds of $\int_\Omega(u+v)^{-q}$ for some $q>0$.}
\end{remark}

The fourth main theorem of the current paper is on the  boundedness
of $\int_\Omega (u+v)^{-q}$  for some $q>0$.  For given $u_0,v_0$ satisfying \eqref{initial-cond-eq} and $\tau\in [0,T_{\max}(u_0,v_0))$, let
\begin{equation*}
 m^*(\tau,u_0,v_0):= {\rm max}\Big\{\int_{\Omega}(u(\tau,x;u_0,v_0)+ v(\tau,x;u_0,v_0))dx,  \frac{a_{\max}|\Omega|}{\min\{b_{\min},c_{\min}\}} \Big\} .
\end{equation*}

\begin{theorem}
\label{new-main-thm4}
\begin{itemize}
\item[(1)]  (Boundedness of $\int_\Omega (u+v)^{-q}$) $\,\,$
 Assume that $a_{\min}>\chi^*(\mu,\chi_1,\chi_2)$. Then there  is $q>0$    such that  for any $u_0,v_0$ satisfying \eqref{initial-cond-eq} and for every  $0<\tau<T_{\max}(u_0,v_0)$,
\begin{align*}
&\int_\Omega (u(t,x;u_0,v_0)+v(t,x;u_0,v_0))^{-q}dx\nonumber\\
& \le e^{-\frac{\epsilon_0 q}{2}(t-\tau)} \int_\Omega(u(\tau,x;u_0,v_0)+v(\tau,x;u_0,v_0))^{-q}dx+  2 C_{\epsilon_0,\tau,u_0,v_0} \epsilon_0^{-1}
\end{align*}
for all $\tau<t<T_{\max}(u_0,v_0)$, where $\epsilon_0=a_{\min}-\chi^*(\mu,\chi_1,\chi_2)$ and
\begin{align*}
    C_{\epsilon_0, \tau,u_0,v_0}=\begin{cases}
(b_{\max}+c_{\max})|\Omega| &{\rm if}\,\,\,q=1,\cr
      \frac{(b_{\max}+c_{\max})^q}{q}\big(\frac{4}{\epsilon_0}\big)^{q}\big(\frac{q-1}{q}\big)^{q-1}|\Omega| & {\rm if} \;\,\,q> 1, \cr
   (b_{\max}+c_{\max})|\Omega|^q (m^*(\tau,u_0,v_0))^{1-q} \; & {\rm if} \; \,\, q < 1.
    \end{cases}
\end{align*}

\item[(2)] (Boundedness of $\int_\Omega (u+v)^{-q}$) $\,\,$
Assume that $a_{\min}>\chi^{**}(\mu,\chi_1,\chi_2)$. Then
for any $u_0,v_0$ satisfying \eqref{initial-cond-eq} and for every  $0<\tau<T_{\max}(u_0,v_0)$,   the following holds,
\begin{align*}
&\int_\Omega (u(x,t;u_0,v_0)+v(x,t;u_0,v_0))^{-{ q}}dx\nonumber\\
& \le e^{-(a_{\min}-\chi^{**}(\mu,\chi_1,\chi_2)) { q }(t-\tau)} \int_\Omega(u(x,\tau;u_0,v_0)+v(x,\tau;u_0,v_0))^{-{ q}}dx+ \tilde   C_{\tau,u_0,v_0}
\end{align*}
for all  $\tau<t<T_{\max}(u_0,v_0)$,
where {$q=q_{\chi_1,\chi_2}$ and}
\begin{equation*}
\tilde C_{\tau,u_0,v_0}=
\begin{cases}
\frac{(b_{\max}+c_{\max})|\Omega|}{a_{\min}-\chi^{**}(\mu,\chi_1,\chi_2)}\,\, &{\rm if}\; q_{\chi_1,\chi_2}=1,\cr\cr
\frac{(b_{\max}+c_{\max})|\Omega|^{q_{\chi_1,\chi_2}}\big(m^*(\tau,u_0,v_0)\big)^{1-q_{\chi_1,\chi_2}}}{a_{\min}-\chi^{**}(\mu,\chi_1,\chi_2)}  \,\,  &{\rm if}\; q_{\chi_1,\chi_2}<1.
\end{cases}
\end{equation*}

\end{itemize}
\end{theorem}

\begin{remark}
\label{rk-3*}
\begin{itemize}
\item[(1)]  The condition $a_{\min}>\chi^{**}(\mu,\chi_1,\chi_2)$ in Theorem \ref{new-main-thm4}(2) implies the condition $a_{\min}>\chi^*(\mu,\chi_1,\chi_2)$ in Theorem { \ref{new-main-thm4}(1)}, but the $q$ in  Theorem {\ref{new-main-thm4}(2)} is explicit.

\item[(2)]  Assume that $a_{\min}>\chi^{*}(\mu,\chi_1,\chi_2)$  or $\chi^{**}(\mu,\chi_1,\chi_2)$.  By H\"older's inequality,   for any $q>0$,
\begin{equation}
\label{lower-bound-u+v-eqq0}
\int_\Omega(u(t,x;u_0,v_0)+v(t,x;u_0,v_0))\ge \frac{|\Omega|^{\frac{q+1}{q}}}{\Big(\int_\Omega(u(t,x;u_0,v_0)+v(t,x;u_0,v_0))^{-q}\Big)^{\frac{1}{q}}}
\end{equation}
for all  $0< t<T_{\max}(u_0,v_0)$.
By   \eqref{lower-bound-u+v-eqq0}, and Theorem \ref{new-main-thm4}(1) or Theorem \ref{new-main-thm4}(2),
$$
\inf_{0\le t<T_{\max}(u_0,v_0)}\int_\Omega (u(t,x;u_0,v_0)+v(t,x;u_0,v_0))dx>0,
$$
which improves \eqref{mass-persistence-eq1}.

\item[(3)] Assume  that {\bf (H1)} and {\bf (H2)} hold. By Theorem  \ref{new-main-thm4},
$\int_\Omega(u(t,x;u_0,v_0)+v(t,x;u_0,v_0))^{-q}dx$  stays bounded for some $q>0$.
{This  plays important roles  in the study of $L^p$-boundedness and  global existence of   classical solutions  of \eqref{main-eq}. We will therefore prove Theorem \ref{new-main-thm4} before proving Theorems \ref{new-main-thm2} and \ref{new-main-thm3}.}
\end{itemize}
\end{remark}

The last main theorem of the current paper  is about  initial independent ultimate upper and lower bounds   of classical solutions of \eqref{main-eq}, some of which follows from Theorems \ref{new-main-thm2}-\ref{new-main-thm4} quite directly. We state them in a theorem since
they have important impacts on the understanding of the asymptotic behavior of  globally defined positive solutions of \eqref{main-eq}.

\begin{theorem}
\label{new-main-thm5}
Assume { that {\bf (H1)} holds}. Then the following hold.

\begin{itemize}

\item[(1)]  (Uniform boundedness)
Let  $q>0$ be as in Theorem \ref{new-main-thm4}(2) and ${ p=p^*>\max\{2,N\}}$, where $p^*$ is  as in Theorem \ref{new-main-thm2}(1). Let ${0<\theta< 1-\frac{N}{p}}$.  There are $M_1^*>0$,  $M_2^*>0$ and $M_3^*>0$ such that for any $u_0,v_0$ satisfying { {\bf (H2)}},
\begin{equation}
\label{new-uniform-bd-eq1}
\limsup_{t\to\infty} \int_\Omega (u(t,x;u_0,v_0)+v(t,x;u_0,v_0))^{-q}dx\le M_1^*,
\end{equation}
\begin{equation}
\label{new-uniform-bd-eq2}
\limsup_{t\to\infty} \int_\Omega (u(t,x;u_0,v_0)+v(t,x;u_0,v_0))^p dx\le M_2^*,
\end{equation}
and
\begin{equation}
\label{new-uniform-bd-eq3}
\limsup_{t\to\infty} \|u(t,\cdot;u_0,v_0)+v(t,\cdot;u_0,v_0)\|_{C^\theta(\bar\Omega)}\le M_3^*.
\end{equation}

\item[(2)]  (Combined pointwise persistence) There is $M_0^*>0$ such that for any $u_0,v_0$ satisfying { {\bf (H2)}},
\begin{equation}
\label{new-uniform-bd-eq4}
 \liminf_{t\to\infty}\inf_{x\in\Omega}(u(t,x;u_0,v_0)+v(t,x;u_0,v_0))\ge M_0^*.
\end{equation}

\end{itemize}
\end{theorem}

\begin{remark}
\label{rk-5}
\begin{itemize}
\item[(1)]  Let  $p, q$ be as in Theorem \ref{new-main-thm5}. Let
\begin{align*}
\mathcal{E}=\Big\{u,v\in C^0(\bar\Omega)\,|\, & u\ge 0, \,\, v\ge 0,\,\, \int_\Omega (u(x)+ v(x))dx >0,\nonumber\\
&\int_\Omega (u(x)+v(x))^{-q}\le M_1^*,\,\, \int_\Omega (u(x)+v(x))^p dx\le M_2^*\Big\}.
\end{align*}
{ If {\bf (H1)} holds,} Theorem \ref{new-main-thm5} shows that $\mathcal{E}$ eventually attracts all globally defined positive solutions of \eqref{main-eq} { with initial conditions satisfying {\bf (H2)}}.

\item[(2)] \eqref{new-uniform-bd-eq4} implies that,  if { {\bf (H1)} holds}, then there is $m_0^*>0$ such that for any $u_0,v_0$ satisfying {{\bf (H2)}},
\begin{equation}
\label{aux-new-eq00-4}
\liminf_{t\to\infty}\int_\Omega (u(t,x;u_0,v_0)+v(t,x;u_0,v_0))dx\ge m_0^*,
\end{equation}
which is refereed to as combined mass persistence.
It  remains open whether $T_{\max}(u_0,v_0)=\infty$ and  \eqref{aux-new-eq00-4}  holds without the assumption  {
{\bf (H2)}}.

\item[(3)]  \eqref{new-uniform-bd-eq3} implies that,   { if {\bf (H1)} holds}, then  for any $u_0,v_0$ satisfying { {\bf (H2)}},
\begin{equation}
\label{aux-new-eq00-5}
\limsup_{t\to\infty}\|u(t,\cdot;u_0,v_0)+v(t,\cdot;u_0,v_0)\|_{L^\infty(\Omega)}\le M_3^*.
\end{equation}
It also remains open whether  $T_{\max}(u_0,v_0)=\infty$ and  \eqref{aux-new-eq00-5}  holds without  {  the assumption
{\bf (H2)}.}

\item[(4)]  We say {\rm both species persistent in mass} if for any $u_0,v_0$ satisfying \eqref{initial-cond-eq},
$$
\liminf_{t\to\infty} \int_\Omega u(t,x;u_0,v_0)dx>0\,\, {\rm and}\,\, \liminf_{t\to\infty} \int_\Omega v(t,x;u_0,v_0)dx>0,
$$
and say {\rm competitive exclusion in mass} occurs if for any $u_0,v_0$ satisfying \eqref{initial-cond-eq},
$$
\limsup_{t\to\infty} \int_\Omega u(t,x;u_0,v_0)dx=0\,\, {\rm and}\,\,  \liminf_{t\to\infty}\int_\Omega  v(t,x;u_0,v_0)dx>0
$$
or
$$
\liminf_{t\to\infty} \int_\Omega u(t,x;u_0,v_0)dx>0\,\, {\rm and}\,\,  \limsup_{t\to\infty} \int_\Omega v(t,x;u_0,v_0)dx=0.
$$
 We say {\rm both species persistent pointwise} if for any $u_0,v_0$ satisfying \eqref{initial-cond-eq},
$$
\liminf_{t\to\infty} \inf_{x\in \Omega} u(t,x;u_0,v_0)>0\,\, {\rm and}\,\, \liminf_{t\to\infty} \inf_{x\in \Omega} v(t,x;u_0,v_0)>0,
$$
and say {\rm pointwise competitive exclusion} occurs  if for any $u_0,v_0$ satisfying \eqref{initial-cond-eq},
$$
\limsup_{t\to\infty} \sup_{x\in \Omega} u(t,x;u_0,v_0)=0\,\, {\rm and}\,\,  \liminf_{t\to\infty}\inf_{x\in\Omega}  v(t,x;u_0,v_0)>0
$$
or
$$
\liminf_{t\to\infty} \inf_{x\in\Omega} u(t,x;u_0,v_0)>0\,\, {\rm and}\,\,  \limsup_{t\to\infty} \sup_{x\in \Omega} v(t,x;u_0,v_0)=0.
$$
We will study the persistence of both species and competitive exclusion somewhere else.
\end{itemize}
\end{remark}

{ We conclude the introduction with some remarks on the following full parabolic counterpart of \eqref{main-eq},
\begin{equation}
\label{full-parabolic-eq}
\begin{cases}
u_t=\Delta u-\chi_1 \nabla\cdot (\frac{u}{w} \nabla w)+u(a_1-b_1u-c_1v) ,\quad &x\in \Omega\cr
v_t=\Delta v-\chi_2 \nabla\cdot (\frac{v}{w} \nabla w)+v(a_2-b_2v-c_2u),\quad &x\in \Omega\cr
w_t=\Delta w-\mu w +\nu u+ \lambda v,\quad &x\in \Omega \cr
\frac{\p u}{\p n}=\frac{\p v}{\p n}=\frac{\p w}{\p n}=0,\quad &x\in\p\Omega.
\end{cases}
\end{equation}
For given  $u_0,v_0,w_0$ satisfying
\begin{equation}
\label{full-parabolic-initial-cond-eq}
u_0,v_0(\cdot)\in C^0(\bar\Omega), \,\, w_0\in W^{1,\infty}(\Omega),\,\,
u_0\ge 0,\,\, v_0\ge 0,\,\, \int_\Omega (u_0(x)+v_0(x))dx>0,\,\,
w_0(x)>0,
\end{equation}
 by standard contraction arguments (see \cite{bel-wi, FuWiYo1, HoWi}), there exists $T_{\max}(u_0,v_0,w_0)\in (0,\infty]$
such that  \eqref{full-parabolic-eq} possesses a unique  positive classical solution, denoted by $(u(t,x;u_0,v_0,w_0)$, $v(t,x;u_0,v_0,w_0)$, $w(t,x;u_0,v_0,w_0))$,  on $(0,T_{\max}(u_0,v_0,w_0))$ with initial condition $(u(0,x;u_0,v_0,w_0)$ , $v(0,x;u_0,v_0,w_0)$, $w(0,x;u_0,v_0,w_0))=(u_0(x),v_0(x),w_0(x))$.
Moreover if $T_{\max}(u_0,v_0,w_0)< \infty,$ then either
\begin{equation*}
\label{local-infty-1-0}
\limsup_{t \nearrow T_{\max}(u_0,v_0,w_0)} \left(\left\| u(t,\cdot;u_0,v_0,w_0)+v(t,\cdot;u_0,v_0,w_0) \right\|_{C^0(\bar \Omega)} +\left\|w(t,\cdot;u_0,v_0,w_0)\right\|_{W^{1,\infty}(\Omega)}\right) =\infty,
\end{equation*}
or
\begin{equation*}
\label{local-infty-2-0}
    \liminf_{t \nearrow T_{\max}(u_0,v_0,w_0)} \inf_{x \in \Omega} w(t,x;u_0,v_0,w_0) =0.
\end{equation*}

Observe  that,  for given $u_0,v_0,w_0$ satisfying \eqref{full-parabolic-initial-cond-eq}, if $v_0(x)\equiv 0$, then $v(t,x;u_0,v_0,w_0)\equiv 0$ for $t\in (0,T_{\max}(u_0,v_0,w_0))$ and $(u(t,x;u_0,w_0),w(t,x;u_0,w_0)):=(u(t,x;u_0,v_0,w_0),w(t,x;u_0,v_0,w_0))$ is the classical solution of the following full parabolic counterpart of \eqref{main-eq1}
\begin{equation}
\label{full-parabolic-eq1}
\begin{cases}
u_t=\Delta u-\chi_1 \nabla\cdot (\frac{u}{w} \nabla w)+u(a_1-b_1u) ,\quad &x\in \Omega\cr
w_t=\Delta w-\mu w +\nu u,\quad &x\in \Omega \cr
\frac{\p u}{\p n}=\frac{\p w}{\p n}=0,\quad &x\in\p\Omega
\end{cases}
\end{equation}
with initial condition $(u(0,x;u_0,w_0),w(0,x;u_0,w_0))=(u_0(x),w_0(x))$.
Similarly, if $u_0\equiv 0$, then $u(t,x;u_0,v_0,w_0)\equiv 0$ on $(0,T_{\max}(u_0,v_0,w_0))$ and $(v(t,x;v_0,w_0),w(t,x;v_0,w_0)):=(v(t,x;u_0,v_0,w_0)$, $w(t,x;u_0,v_0,w_0))$ is the classical solution of
\begin{equation}
\label{full-parabolic-eq2}
\begin{cases}
v_t=\Delta v-\chi_2 \nabla\cdot (\frac{v}{w} \nabla w)+v(a_2-b_2v),\quad &x\in \Omega\cr
w_t=\Delta w-\mu w + \lambda v,\quad &x\in \Omega \cr
\frac{\p v}{\p n}=\frac{\p w}{\p n}=0,\quad &x\in\p\Omega
\end{cases}\,
\end{equation}
with initial condition $(v(0,x;v_0,w_0),w(0,x;v_0,w_0))=(v_0(x),w_0(x))$.

Systems \eqref{full-parabolic-eq1} and \eqref{full-parabolic-eq2} are essentially the same.  There are several works on the global existence and asymptotic behavior of classical or weak solutions of \eqref{full-parabolic-eq1}. For example, in the case  $N=2$, the authors of \cite{AiOs} proved the global existence of solutions of \eqref{full-parabolic-eq1} with initial functions $u_0\in L^2(\Omega)$, $u_0\ge 0$, and $w_0\in H^{1+\theta_0}(\Omega)$ for some $\theta_0\in (0,1/2)$, $\inf_{x\in\Omega}w_0(x)>0$ (see \cite[Theorem 2.1]{AiOs});  the authors of \cite{ZhZh} proved the global existence and boundedness of classical solutions of \eqref{full-parabolic-eq1} with initial functions $u_0\in C^0(\bar \Omega)$, $u_0(x)\ge 0$, $u_0\not\equiv 0$, and $w_0\in W^{1,q}(\Omega)$ for some $q>2$, $w_0(x)>0$ provided that $\chi_1$ is  relatively small with respect  to $a_1$  (see \cite[Theorem 1]{ZhZh});  the authors of \cite{ZhMu} showed the global stability of the positive constant solution of \eqref{full-parabolic-eq1} provided that $\chi_1$ is relatively small with respect  to $a_1$ (see \cite[Theorem 1.1]{ZhMu}).
For general $N\ge 1$,  global existence of weak solutions of \eqref{full-parabolic-eq1} is studied in \cite{DiWaZh,ZhZh1}. The authors of \cite{Fuj,Lan, Win} studied the  global existence and boundedness of classical solutions of \eqref{full-parabolic-eq1} with $a_1=b_1=0$.

Up to our knowledge, it  remains open whether \eqref{full-parabolic-eq1} has a global classical solution for any given initial functions $u_0\in C^0(\bar \Omega)$, $u_0(x)\ge 0$, $u_0\not\equiv 0$, and $w_0\in W^{1,\infty}(\Omega)$, $w_0(x)>0$ in any space dimensional setting.
There is little study on the global existence and boundedness of classical solutions of \eqref{full-parabolic-eq} for any given initial functions   $u_0,v_0,w_0$ satisfying \eqref{full-parabolic-initial-cond-eq}.  We remark that finite upper bounds of $\int_\Omega (u(t,x)+v(t,x))^pdx$ for some $p\gg 1$ and positive lower bounds of $w(t,x)$, or equivalently, finite upper bounds of
$\int_\Omega(u(t,x)+v(t,x))^{-q}$ for some $q>0$,  are among the key ingredients in  the proofs of global existence and boundedness of the classical solution $u(t,x),v(t,x),w(t,x))$ of \eqref{main-eq} with initial functions $u_0,v_0$ satisfying \eqref{initial-cond-eq}.
We expect that such bounds for the solutions of \eqref{full-parabolic-eq} if they can be obtained will also ensure the global existence and boundedness of classical solutions of \eqref{full-parabolic-eq} with initial conditions $u_0,v_0,w_0$ satisfying \eqref{full-parabolic-initial-cond-eq}.
However, the techniques in the current paper to obtain
the upper bounds of $\int_\Omega(u+v)^p$ for $p\gg 1$ and $\int_\Omega(u+v)^{-q}$ for some $q>0$ for the solutions of \eqref{main-eq}
rely on the fact that the equation for $w$ in \eqref{main-eq} is  elliptic (see Remarks \ref{elliptic-rk1} and \ref{elliptic-rk2}).
New techniques/methods need to be developed to get such bounds for the solutions of \eqref{full-parabolic-eq}. We wish to carry out some study on   the global existence and various properties of classical solutions of \eqref{full-parabolic-eq} in the near future.
}

The rest of the paper is organized as follows.  In section 2,  we present some preliminary lemmas.  In section 3, we prove Theorems \ref{new-main-thm1} and
and \ref{new-main-thm4}. Section 4 is devoted to the proof of Theorem \ref{new-main-thm2}.
Theorems \ref{new-main-thm3} and \ref{new-main-thm5} are proved in section 5.
 We prove one important technical proposition in the Appendix.

\section{Preliminary lemmas}

In this section, we present some lemmas to be used in later sections.

First, let $\chi^*(\mu,\chi_1,\chi_2)$ be defined as in \eqref{bound-for-a-eq3}. We  present the following lemma on the continuity and {upper bounds}  of $\chi^*(\mu,\chi_1,\chi_2)$.

\begin{lemma}
\label{chi-star-lm}
\begin{itemize}
\item[(1)]
$\chi^*(\mu,\chi_1,\chi_2)$ is  upper semicontinuous in $\mu>0$, $\chi_1>0$ and $\chi_2>0$, that is,
$$
\limsup_{(\mu,\chi_1,\chi_2)\to (\mu^0,\chi_1^0,\chi_2^0)}\chi^*(\mu,\chi_1,\chi_2)\le \chi^*(\mu^0,\chi_1^0,\chi_2^0)\quad\forall\, \mu^0>0,\,\,\chi_1^0>0,\,\, \chi_2^0>0.
$$

\item[(2)]
For any $\mu>0$, $\chi_1>0$ and $\chi_2>0$,
\begin{equation}
\label{main-assumption-1}
    \chi^*(\mu,\chi_1,\chi_2)\le \min\Big\{\mu \chi_2+\frac{\mu (\chi_1-\chi_2)^2}{{ 4}},\mu\chi_1+\frac {\mu(\chi_2-\chi_1)^2} {{ 4}}\Big\},
\end{equation}
and when $\chi_1=\chi_2=\chi$,
\begin{equation}
\label{main-assumption-2}
\chi^*(\mu,\chi,\chi){ \le} \begin{cases} \frac{\mu \chi^2}{4} &{\rm if}\,\, 0<\chi<2\cr
\mu(\chi-1) &{\rm if}\,\, \chi \ge 2.
\end{cases}
\end{equation}
\end{itemize}
\end{lemma}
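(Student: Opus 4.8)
The plan is to handle the two assertions separately, using only the explicit formula \eqref{function-f-eq} together with the definitions \eqref{bound-for-a-eq1}--\eqref{bound-for-a-eq3}.

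For part (1), the key observation is that $\chi_1^*$ and $\chi_2^*$ are pointwise infima of families of functions that depend continuously on $(\mu,\chi_1,\chi_2)$, hence are upper semicontinuous, and that a minimum of two upper semicontinuous functions is again upper semicontinuous. Concretely, I would fix $(\mu^0,\chi_1^0,\chi_2^0)$ and $\varepsilon>0$, choose an admissible pair $(\beta_0,B_0)$, i.e.\ with $\beta_0>0$, $B_0>0$ and $\beta_0\ne|\chi_2^0-B_0|$, such that $f(\mu^0,\chi_1^0,\chi_2^0,\beta_0,B_0)<\chi_1^*(\mu^0,\chi_1^0,\chi_2^0)+\varepsilon$, and then note that since $\chi_2\mapsto|\chi_2-B_0|$ is continuous, the strict inequality $\beta_0\ne|\chi_2-B_0|$ --- and hence the admissibility of $(\beta_0,B_0)$ --- persists for $(\mu,\chi_1,\chi_2)$ near $(\mu^0,\chi_1^0,\chi_2^0)$. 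Consequently $\chi_1^*(\mu,\chi_1,\chi_2)\le f(\mu,\chi_1,\chi_2,\beta_0,B_0)$, and letting $(\mu,\chi_1,\chi_2)\to(\mu^0,\chi_1^0,\chi_2^0)$ and then $\varepsilon\downarrow0$ gives $\limsup\chi_1^*\le\chi_1^*(\mu^0,\chi_1^0,\chi_2^0)$; the same argument applies to $\chi_2^*$, and (1) follows.

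For the inequality \eqref{main-assumption-1} in part (2), the idea is simply to produce, for each of the two terms on the right-hand side, an admissible (or limiting) family $(\beta,B)$ along which $f$ converges to that term, the value then being read off directly from \eqref{function-f-eq}. A natural choice for the first term is to fix $B$ comparable to $\chi_2$, so that the $(|\chi_2-B|-\beta)^2$ contribution is controlled, and then optimize explicitly over $\beta$; the second term comes from the symmetric choice applied to $\chi_2^*$, with the roles of $\chi_1$ and $\chi_2$ interchanged. Passing to the infima and then to $\min\{\chi_1^*,\chi_2^*\}$ yields \eqref{main-assumption-1}; this is automatically consistent with, though much weaker than, \eqref{main-assumption-2} in the case $\chi_1=\chi_2$.

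For the identity \eqref{main-assumption-2}, note first that when $\chi_1=\chi_2=\chi$ the term $(\chi_1-\chi_2)^2\beta$ vanishes, so the numerator in \eqref{function-f-eq} reduces to $(|\chi-B|-\beta)^2$, and moreover $f(\mu,\chi_2,\chi_1,\beta,B)=f(\mu,\chi_1,\chi_2,\beta,B)$, so that $\chi_2^*=\chi_1^*$ and it suffices to compute $\chi_1^*(\mu,\chi,\chi)$. The plan is to reduce the two-parameter infimum to a one-parameter one: an elementary rescaling/AM--GM step makes the infimum over the ``shape'' of $(\beta,B)$ explicit, leaving a single real-valued function $g$ of $u:=B+\beta$ to minimize over $u>0$, and a short calculus computation then shows that $g$ has an interior critical point exactly when $\chi\ge2$, at which $g=\mu(\chi-1)$, whereas for $0<\chi<2$ the infimum of $g$ is not attained but is approached as $u\downarrow0$ with limiting value $\frac{\mu\chi^2}{4}$; combining the two cases gives \eqref{main-assumption-2}. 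The step I expect to be most delicate is precisely this last minimization: one must verify that the reduction to a one-variable problem is lossless --- in particular that none of the constraints $B>0$, $\beta>0$, $\beta\ne|\chi-B|$ is violated along the minimizing sequence --- and one must handle carefully the threshold $\chi=2$, where the optimal configuration migrates from an interior minimizer to the boundary $u\downarrow0$. By comparison, part (1) and the inequality \eqref{main-assumption-1} are routine.
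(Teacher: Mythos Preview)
Your treatment of part~(1) and of the inequality \eqref{main-assumption-1} is correct and coincides with the paper's: for (1) one picks a near-optimal admissible $(\beta_0,B_0)$ at the limit point and uses continuity of $f$; for \eqref{main-assumption-1} the paper simply sets $B=\chi_2$ (respectively $B=\chi_1$) and sends $\beta\to 0^+$.

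For the identity \eqref{main-assumption-2} your outline is close in spirit to the paper's, but the proposed ``rescaling/AM--GM'' step will not produce what you claim if you work from \eqref{function-f-eq} as printed. That display has a misprint: comparing with \eqref{new-p-eq0} and with the identity $(B\mu+\beta\mu)(1+1/q)=f$ in Lemma~\ref{mass-persistence-thm}, the numerator of the big fraction should be $B(|\chi_2-B|-\beta)^2+(\chi_1-\chi_2)^2\beta$, so that when $\chi_1=\chi_2=\chi$ one actually has
\[
f=\mu(B+\beta)\Big(1+\frac{(|\chi-B|-\beta)^2}{4\beta}\Big),
\]
with $4\beta$ rather than $4B\beta$ downstairs. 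With this corrected form, for fixed $u=B+\beta<\chi$ the fraction equals $\frac{(\chi-u)^2}{4\beta}$, which is \emph{monotone} in $\beta$; the inner optimum is $B\to 0^+$, $\beta\to u^-$ (not $B=\beta=u/2$), giving $g(u)=\mu\big(u+\tfrac{(\chi-u)^2}{4}\big)$. This $g$ indeed tends to $\mu\chi^2/4$ as $u\downarrow 0$ and has the interior critical point $u=\chi-2$ with value $\mu(\chi-1)$ exactly when $\chi\ge 2$, as you state. By contrast, if you take \eqref{function-f-eq} literally and apply AM--GM to $4B\beta\le u^2$, you obtain $g(u)=\mu(2u+\chi^2/u-2\chi)$, which blows up as $u\downarrow 0$ and never yields $\mu(\chi-1)$, so your asserted conclusions would fail. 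The paper itself bypasses the $u$-parametrization: it restricts to $0<B<\chi$, $0<\beta\le\chi-B$, sends $B\to 0^+$, and minimizes the resulting function of $\beta$ --- which is exactly your $g$ with $u$ renamed $\beta$. You should also explicitly rule out $B\ge\chi$: there $f\ge\mu(B+\beta)>\mu\chi$, which already exceeds both $\mu\chi^2/4$ (for $\chi<2$) and $\mu(\chi-1)$, so that regime cannot lower the infimum.
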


{\begin{remark}
\label{chi-star-rk}
We point out that the boundedness of $\int_\Omega (u(t,x;u_0,v_0)+v(t,x;u_0,v_0))^{-q}dx$ for some $q>0$ plays a crucial role in the study of the boundedness of $u(t,x;u_0,v_0)+v(t,x;u_0,v_0)$. The assumption that $a_{\min}>\chi^*(\mu,\chi_1,\chi_2)$ indicates that
the chemotaxis sensitivities $\chi_1,\chi_2$ and the degradation rate $\mu$ of the chemical substance are small relatively with respect to $a_{\min}$ and ensures the boundedness
 of
$\int_\Omega (u(t,x;u_0,v_0)+v(t,x;u_0,v_0))^{-q}dx$ for some $q>0$ (see Lemma \ref{mass-persistence-thm} and the arguments of Theorem \ref{new-main-thm2}). In the following, we provide some discussions on some specific upper bounds of $\chi^*(\mu,\chi_1,\chi_2)$   for the reader to get some comprehensive
feeling about the impact of $\chi_1,\chi_2$ and $\mu$ on the boundedness of
$\int_\Omega(u+v)^{-q}$. First, note that
\begin{align}
\label{new-u+v-eq1-0}
\frac{\p }{\p t}(u+v)=& \Delta(u+v)-\nabla\cdot \left(\frac{\chi_1 u+\chi_2 v}{w}\nabla w\right)\nonumber\\
& +(a_1u+a_2 v)-(b_1u^2+b_2 v^2)-(c_1+c_2)u v .
\end{align}
Next, for any $q>0$,
multiplying  \eqref{new-u+v-eq1-0} by  $(u+v)^{-q-1}$ and integrating over $\Omega$, we have
\begin{align*}
    \frac{1}{q}\frac{d}{dt}\int_\Omega (u+v)^{-q}
=& -(q+1)\int_\Omega (u+v)^{-q-2}|\nabla (u+v)|^2\nonumber\\
& +(q+1)\int_\Omega (u+v)^{-q-2}\frac{\chi_1 u+\chi_2 v}{w} \nabla(u+v)\cdot\nabla w\nonumber\\
& -\int_\Omega (u+v)^{-q-1}(a_1u+a_2 v)+\int_\Omega (u+v)^{-q-1}(b_1 u^2+b_2 v^2)\nonumber\\
&+\int_\Omega (u+v)^{-q-1}(c_1+c_2) uv
\end{align*}
\begin{align}
\label{new-LP-eq0-0}
\hspace{0.5in}
\le &- (q+1)\int_\Omega (u+v)^{-q-2}|\nabla (u+v)|^2\nonumber\\
&+(q+1)\chi_2 \int_\Omega
\frac{(u+v)^{-q-1}}{w}\nabla (u+v)\cdot\nabla w\nonumber\\
& +(q+1)(\chi_1-\chi_2)\int_\Omega (u+v)^{-q-2}\frac{u}{w}\nabla (u+v)\cdot\nabla w\nonumber\\
& - a_{\min} \int_\Omega (u+v)^{-q}+b_{\max} \int_\Omega (u+v)^{-q+1}+c_{\max} \int_\Omega (u+v)^{-q+1}\nonumber\\
\le &- (q+1)\int_\Omega (u+v)^{-q-2}|\nabla (u+v)|^2\nonumber\\
&+(q+1)\chi_2 \int_\Omega
\frac{(u+v)^{-q-1}}{w}\nabla (u+v)\cdot\nabla w\nonumber\\
&+\frac{(\chi_1-\chi_2)^2 q(q+1)}{4B}\int_\Omega
(u+v)^{-q-2}|\nabla (u+v)|^2 +\frac{B(q+1)}{q}\int_\Omega (u+v)^{-q}\frac{|\nabla w|^2}{w^2}\nonumber\\
& - a_{\min} \int_\Omega (u+v)^{-q}+(b_{\max}+c_{\max}) \int_\Omega (u+v)^{-q+1}, \,\,\, \,\, \forall\, B>0.
\end{align}
 Now, by the third equation in \eqref{main-eq}, we have
$$
\frac{B}{q}\int_\Omega (u+v)^{-q}\frac{|\nabla w|^2}{w^2}\le \frac{B\mu}{q}\int_\Omega (u+v)^{-q}-B\int_\Omega \frac{(u+v)^{-q-1}}{w}\nabla(u+v)\cdot\nabla w
$$
(see the arguments of Lemma \ref{mass-persistence-thm} for detail).
Hence
\begin{align*}
\frac{1}{q}\frac{d}{dt}\int_\Omega(u+v)^{-q} \le &- (q+1)\int_\Omega (u+v)^{-q-2}|\nabla (u+v)|^2\nonumber\\
&+(q+1)\chi_2 \int_\Omega
\frac{(u+v)^{-q-1}}{w}\nabla (u+v)\cdot\nabla w\nonumber\\
&+   \frac{(\chi_1-\chi_2)^2 q(q+1)}{4B}\int_\Omega
(u+v)^{-q-2}|\nabla (u+v)|^2 \nonumber\\
&+ \frac{B(q+1)\mu}{q}\int_\Omega (u+v)^{-q}-B(q+1)\int_\Omega \frac{(u+v)^{-q-1}}{w}\nabla(u+v)\cdot\nabla w\nonumber\\
& - a_{\min} \int_\Omega (u+v)^{-q}+(b_{\max}+c_{\max}) \int_\Omega (u+v)^{-q+1}
\end{align*}
for any $q>0, B>0$. Let $B=\chi_2$ and $q=\frac{4\chi_2}{(\chi_1-\chi_2)^2}$ in the case $\chi_1\not =\chi_2$. We get
$$
\frac{1}{q}\frac{d}{dt}\int_\Omega(u+v)^{-q}\le -\Big(a_{\min}-\mu \big(\chi_2+\frac{(\chi_1-\chi_2)^2}{4}\big)\Big)\int_\Omega(u+v)^{-q}+(b_{\max}+c_{\max}) \int_\Omega (u+v)^{-q+1}.
$$
Therefore, $a_{\min}>\mu\big(\chi_2+\frac{(\chi_1-\chi_2)^2}{4}\big)$ ensures the boundedness of
$\int_\Omega(u+v)^{-q}$  (see the arguments of Theorem \ref{new-main-thm2}) and  $\mu \big(\chi_2+\frac{(\chi_1-\chi_2)^2}{4}\big)$ is an upper bound of $\chi^*(\mu,\chi_1,\chi_2)$. Similarly,   $\mu\big(\chi_1+\frac{(\chi_1-\chi_2)^2}{4}\big)$ is an upper bound of $\chi^*(\mu,\chi_1,\chi_2)$.
\end{remark}
}

\newpage
\begin{proof}[Proof of Lemma \ref{chi-star-lm}]
(1) We prove that $\chi^*(\mu,\chi_1,\chi_2)$ is upper semicontinuous in   $\mu>0$, $\chi_1>0$ and $\chi_2>0$.

Fix $(\mu^0,\chi_1^0, \chi_2^0)$ with $\mu^0, \chi_1^0, \chi_2^0>0.$
Without loss of generality, we may assume that $\chi^*(\mu^0,\chi_1^0,\chi_2^0)=\chi^*_1(\mu^0,\chi_1^0,\chi_2^0).$  Then, for any $\epsilon>0,$ there are $\tilde\beta>0$ and $\tilde B>0$ satisfying $\tilde\beta\not ={\chi_2^0-\tilde B}$ such that
\begin{equation*}
    \chi^*_1(\mu^0,\chi_1^0, \chi_2^0) \ge f(\mu^0,\chi_1^0, \chi_2^0,\tilde \beta, \tilde B)-\epsilon ,
\end{equation*}
 and there is $\delta>0$ such that
$$
\tilde\beta\not ={ \chi_2-\tilde B}
$$
and
\begin{equation*}
    f(\mu,\chi_1, \chi_2,\tilde \beta, \tilde B) \leq f(\mu^0,\chi_1^0, \chi_2^0,\tilde \beta, \tilde B)+\epsilon
\end{equation*}
 for all $\mu>0,\chi_1>0,\chi_2>0$ satisfying $|\mu-\mu^0|<\delta$, $|\chi_1-\chi_1^0|<\delta$, and $|\chi_2-\chi_2^0|<\delta$.  This implies that
\begin{equation*}
    f(\mu,\chi_1, \chi_2,\tilde \beta, \tilde B)  \leq f(\mu^0,\chi_1^0, \chi_2^0,\tilde \beta, \tilde B)+\epsilon \leq \chi^*_1(\mu^0,\chi_1^0, \chi_2^0) +2\epsilon,
\end{equation*}
and then
\begin{equation*}
    \chi_1^*(\mu,\chi_1,\chi_2) \leq \chi^*_1(\mu^0,\chi_1^0, \chi_2^0) +2\epsilon.
\end{equation*}
Therefore,
$$
\limsup_{(\mu,\chi_1,\chi_2)\to (\mu^0,\chi_1^0,\chi_2^0)}\chi^*(\mu,\chi_1,\chi_2)\le \chi^*(\mu^0,\chi_1^0,\chi_2^0)\quad\forall\, \mu^0,\chi_1^0, \chi_2^0>0.
$$

(2)  We first prove  \eqref{main-assumption-2}
when  $\chi_1=\chi_2$. In this case,  we have
$$
f(\mu,\chi_1,\chi_2, \beta, B)=\mu(B+\beta) \big(1+\frac{({ \chi_2-B} -\beta)^2}{4\beta}\big).
$$
This implies that
\begin{align*}
\chi_1^*(\mu,\chi_1,\chi_2)&=\inf\Big\{  \mu(B+\beta) \big(1+\frac{({ \chi_2-B} -\beta)^2}{4\beta}\big)\,|\, B>0,\, \beta>0, \beta\not ={ \chi_2-B}\Big\}\nonumber\\
&\le \inf\Big\{\mu \beta \Big(1+\frac{(\chi_2-\beta)^2}{4\beta}\Big)\,|\, 0< \beta<\chi_2\Big\}\nonumber\\
&=\begin{cases} \frac{\mu \chi_2^2}{4}\quad &{\rm if}\,\, 0<\chi_2<2\cr
\mu(\chi_2-1)\quad &{\rm if}\,\, \chi_2>2.
\end{cases}
\end{align*}
Similarly,
$$
\chi_2^*(\mu,\chi_1,\chi_2)\le \begin{cases} \frac{\mu \chi_{1}^2}{4}\quad &{\rm if}\,\, 0<\chi_{ 1}<2\cr
\mu(\chi_{ 1}-1)\quad &{\rm if}\,\, \chi_{ 1}>2.
\end{cases}
$$
Hence \eqref{main-assumption-2} holds.

Next  we prove \eqref{main-assumption-1} for any $\mu>0$, $\chi_1>0$ and $\chi_2>0$.
Recall that
$$
f(\mu,\chi_1,\chi_2, \beta, B)=\mu(B+\beta) \Big(1+\frac{{ B}({\chi_2-B} -\beta)^2+(\chi_1-\chi_2)^2\beta}{4B\beta}\Big).
$$
Then
$$
f(\mu,\chi_1,\chi_2,\beta,\chi_2)=\mu(\chi_2+\beta)\Big(1+\frac{\beta\chi_2+(x_1-x_2)^2}{4\chi_2}\Big)
$$
and \vspace{-0.05in}
\begin{align*}
\chi_1^*(\mu,\chi_1,\chi_2)\le \inf\Big\{f(\mu,\chi_1,\chi_2,\beta,\chi_2)\,|\, \beta>0\Big\}=\mu \chi_2+\frac{\mu (\chi_1-\chi_2)^2}{{ 4}}.
\end{align*}
Similarly,
\begin{align*}
\chi_2^*(\mu,\chi_1,\chi_2)\le \inf\Big\{f(\mu,\chi_2,\chi_1,\beta,\chi_1)\,|\, \beta>0\Big\}=\mu \chi_1+\frac{\mu (\chi_2-\chi_1)^2}{{ 4}}.
\end{align*}
Hence \eqref{main-assumption-1} holds.
\end{proof}

Next, we present some properties of the semigroup generated by $-\Delta+ \mu I$ complemented with Neumann boundary condition on $L^p(\Omega)$.
For given $1< p<\infty$,  let $X_p=L^{p}(\Omega)$
  and
\begin{equation}
\label{A-p-def}
 A_p=-\Delta+ \mu I: D(A_p)\subset L^p(\Omega)\to L^p(\Omega)
\end{equation}
 with
  $$D(A_p)=\left\{ u \in W^{2,p}(\Omega) \, |  \, \frac{\p u}{\p n}=0 \quad \text{on } \, \p \Omega \right\}.
  $$   Then $-A_p$ generates an analytic semigroup on $L^p(\Omega)$. We denote it by
$e^{-tA_p}$. Note that ${\rm Re}\sigma(A_p)>0$.  Let $A_p^\beta$ be the fractional power operator of $A_p$ (see \cite[Definition 1.4.1]{Hen}).
Let $X_p^\beta=\mathcal{D}(A_p^\beta)$ with graph norm $\|u\|_{X_p^\beta}=\|A_p^\beta u\|_{L^p(\Omega)}$ for $\beta\ge 0$ and $u\in X_p^\beta$
(see \cite[Definition 1.4.7]{Hen}).

\begin{lemma}
\label{pre-lm-2}
\begin{itemize}
\item[(i)] For each  $p\in (1,\infty)$ and $\beta\ge 0$, there is $C_{p,\beta}>0$ such that for some $\gamma>0$,
\begin{equation*}
    \|A_p^\beta e^{-A_pt}\|_{L^p(\Omega)} \leq C_{p,\beta} t^{-\beta} e^{-\gamma t}  \quad \, for \; t>0.
\end{equation*}

\item[(ii)]
If $m \in \{0,1\}$ and $q\in [p,\infty]$ are such that $m-\frac{N}{q}<2\beta-\frac{N}{p}$,
then
$
 X_p^\beta\hookrightarrow W^{m,q}(\Omega).
$

\item[(iii)] If $2\beta -\frac{N}{p}>\theta\ge 0$, then
$
X_p^\beta\hookrightarrow C^\theta(\Omega).
$
\end{itemize}
\end{lemma}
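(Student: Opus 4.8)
The plan is to derive all three statements from standard facts about analytic semigroups and fractional powers of sectorial operators, so that nothing beyond \cite{Hen} is really needed. First I would record the background structure: since $\Omega$ is a bounded smooth domain and $1<p<\infty$, the Neumann Laplacian on $L^p(\Omega)$ generates an analytic semigroup, hence $A_p=-\Delta+\mu I$ is sectorial on $L^p(\Omega)$, with spectrum $\sigma(A_p)=\{\mu+\lambda_k:k\ge 0\}$, where $0=\lambda_0<\lambda_1\le\lambda_2\le\cdots$ are the Neumann eigenvalues of $-\Delta$ on $\Omega$. In particular $\inf{\rm Re}\,\sigma(A_p)=\mu>0$; this positivity is exactly the input that will make the exponential decay rates below strictly positive.

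For part (i), once $A_p$ is known to be sectorial with $\sigma(A_p)\subset\{{\rm Re}\,z\ge\mu\}$, the bound $\|A_p^\beta e^{-tA_p}\|_{\mathcal L(L^p(\Omega))}\le C_{p,\beta}\,t^{-\beta}e^{-\gamma t}$ for $t>0$ is the general smoothing estimate for such semigroups (e.g. \cite[Theorem 1.4.3]{Hen}), valid for any fixed $\gamma\in(0,\mu)$. For parts (ii) and (iii), I would use that $D(A_p)=\{u\in W^{2,p}(\Omega):\partial u/\partial n=0\text{ on }\partial\Omega\}$ is a closed subspace of $W^{2,p}(\Omega)$, so the fractional power spaces $X_p^\beta=\mathcal D(A_p^\beta)$ interpolate between $L^p(\Omega)$ and this $W^{2,p}$-type space; the claimed inclusions are then precisely the Sobolev and H\"older embeddings of such interpolation spaces, namely $X_p^\beta\hookrightarrow W^{m,q}(\Omega)$ whenever $q\ge p$ and $m-\frac{N}{q}<2\beta-\frac{N}{p}$, and $X_p^\beta\hookrightarrow C^\theta(\bar\Omega)$ whenever $0\le\theta<2\beta-\frac{N}{p}$ (see \cite[Theorem 1.6.1]{Hen}). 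Taking $m\in\{0,1\}$ gives (ii), and the stated range of $\theta$ gives (iii).

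I do not expect any real obstacle here: the lemma only collects textbook material and is used purely as a tool in later sections. The single step that genuinely uses the structure of the problem is the observation that the shift $+\mu I$ moves the Neumann spectrum of $-\Delta$, which lies in $[0,\infty)$, strictly into $\{{\rm Re}\,z>0\}$, so that the decay rate $\gamma$ in (i) can be chosen strictly positive. If a self-contained argument were preferred over the citations, (i) could be recovered from the Dunford-integral representation $A_p^\beta e^{-tA_p}=\frac{1}{2\pi i}\int_\Gamma\lambda^\beta e^{-t\lambda}(\lambda-A_p)^{-1}\,d\lambda$ over a sectorial contour $\Gamma$ lying in $\{{\rm Re}\,\lambda>\gamma\}$, and (ii)--(iii) from the identification of $X_p^\beta$ as a complex interpolation space together with the classical Sobolev embedding theorem; but I would keep the references.
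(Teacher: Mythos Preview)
Your proposal is correct and follows essentially the same approach as the paper: the paper's proof consists solely of citing \cite[Theorem 1.4.3]{Hen} for (i) and \cite[Theorem 1.6.1]{Hen} for (ii) and (iii), exactly the references you invoke. Your additional remarks on sectoriality, the spectral shift by $\mu$, and the interpolation-space viewpoint are accurate context but go beyond what the paper itself records.
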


\begin{proof}
(i) It follows from \cite[Theorem 1.4.3]{Hen}.

(ii) It follows from \cite[Theorem 1.6.1]{Hen}.

(iii) It also follows from \cite[Theorem 1.6.1]{Hen}.
\end{proof}

\begin{lemma}
\label{pre-lm-3}
Let $\beta \geq 0,$ $p \in (1,\infty)$. Then for any $\epsilon >0$ there exists $C_{p,\beta,\epsilon}>0$ such that for any $w \in  C^{\infty}_0(\Omega)$ we have
\begin{equation}
\label{001}
\|A_p^{\beta}e^{-tA_p}\nabla\cdot w\|_{L^p(\Omega)}  \leq C_{p,\beta,\epsilon} t^{-\beta-\frac{1}{2}-\epsilon} e^{-\gamma  t} \|w\|_{L^p(\Omega)} \quad \text{for all}\,\,  t>0  \, \text{and  some } \, \gamma>0.
\end{equation}
Consequently,  for all $t>0$ the operator $A_p^\beta e^{-tA_p}\nabla\cdot $ admits a unique extension to all of $L^p(\Omega)$ which is again denoted by $A_p^\beta e^{-t A_p}\nabla\cdot$  and  satisfies $\eqref{001}$ for all $\mathbb{R}^N$-valued $w \in L^p(\Omega).$
\end{lemma}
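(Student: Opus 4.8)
The plan is to control $A_p^{\beta}e^{-tA_p}\nabla\cdot$ by ``borrowing'' half a derivative plus an arbitrarily small $\epsilon$ from the smoothing of the analytic semigroup and paying it back through the Sobolev embeddings recorded in Lemma \ref{pre-lm-2}, carrying out the estimate on the dual space $L^{p'}(\Omega)$, where $p'$ denotes the conjugate exponent of $p$. Concretely, I would fix $t>0$, fix an $\R^N$-valued $w\in C^\infty_0(\Omega)$ and an arbitrary $\phi\in C^\infty_0(\Omega)$, and use that, for the Neumann realization of $-\Delta+\mu I$, the bounded operators $e^{-tA_p}$ and $A_p^\beta$ on $L^p(\Omega)$ are consistent with and dual to $e^{-tA_{p'}}$ and $A_{p'}^\beta$ on $L^{p'}(\Omega)$ (classical abstract semigroup theory, cf. \cite{Hen}). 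Since $w$ has compact support in $\Omega$, integration by parts produces no boundary term, so
\[
\int_\Omega \bigl(A_p^\beta e^{-tA_p}\nabla\cdot w\bigr)\,\phi\,dx
=\int_\Omega (\nabla\cdot w)\,\bigl(A_{p'}^\beta e^{-tA_{p'}}\phi\bigr)\,dx
=-\int_\Omega w\cdot\nabla\bigl(A_{p'}^\beta e^{-tA_{p'}}\phi\bigr)\,dx,
\]
whence $\bigl|\int_\Omega (A_p^\beta e^{-tA_p}\nabla\cdot w)\,\phi\,dx\bigr|\le \|w\|_{L^p(\Omega)}\,\bigl\|\nabla A_{p'}^\beta e^{-tA_{p'}}\phi\bigr\|_{L^{p'}(\Omega)}$.

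It then remains to bound $\bigl\|\nabla A_{p'}^\beta e^{-tA_{p'}}\phi\bigr\|_{L^{p'}(\Omega)}$. Here I would invoke Lemma \ref{pre-lm-2}(ii) with the underlying space $L^{p'}(\Omega)$, $m=1$, $q=p'$, and fractional exponent $\tfrac12+\epsilon$: since $1-\tfrac{N}{p'}<2\bigl(\tfrac12+\epsilon\bigr)-\tfrac{N}{p'}$, the embedding $X_{p'}^{\,1/2+\epsilon}\hookrightarrow W^{1,p'}(\Omega)$ is continuous, so $\|\nabla\psi\|_{L^{p'}(\Omega)}\le C\,\|A_{p'}^{\,1/2+\epsilon}\psi\|_{L^{p'}(\Omega)}$ for $\psi\in X_{p'}^{\,1/2+\epsilon}$. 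Choosing $\psi=A_{p'}^\beta e^{-tA_{p'}}\phi$ (which, for $t>0$, lies in $\mathcal{D}(A_{p'}^\gamma)$ for every $\gamma\ge 0$ by analyticity) and using $A_{p'}^{\,1/2+\epsilon}A_{p'}^{\beta}=A_{p'}^{\,\beta+1/2+\epsilon}$ together with Lemma \ref{pre-lm-2}(i),
\[
\bigl\|\nabla A_{p'}^\beta e^{-tA_{p'}}\phi\bigr\|_{L^{p'}(\Omega)}
\le C\,\bigl\|A_{p'}^{\,\beta+1/2+\epsilon}e^{-tA_{p'}}\phi\bigr\|_{L^{p'}(\Omega)}
\le C\,C_{p',\beta+1/2+\epsilon}\,t^{-\beta-\frac12-\epsilon}e^{-\gamma t}\,\|\phi\|_{L^{p'}(\Omega)}
\]
for some $\gamma>0$ (any $\gamma\in(0,\mu)$ works, since ${\rm Re}\,\sigma(A_{p'})>0$). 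Combining the two displays and taking the supremum over $\phi\in C^\infty_0(\Omega)$ with $\|\phi\|_{L^{p'}(\Omega)}\le 1$ --- which is legitimate by density of $C^\infty_0(\Omega)$ in $L^{p'}(\Omega)$ --- yields \eqref{001} for all $\R^N$-valued $w\in C^\infty_0(\Omega)$, with $C_{p,\beta,\epsilon}$ the product of the embedding constant and $C_{p',\beta+1/2+\epsilon}$. (Equivalently, one may phrase this as factoring $A_p^\beta e^{-tA_p}\nabla\cdot=\bigl(A_p^{\beta+1/2+\epsilon}e^{-tA_p}\bigr)\circ\bigl(A_p^{-1/2-\epsilon}\nabla\cdot\bigr)$ and noting that the second factor is bounded on $L^p(\Omega)$, which is again proved by the duality above.)

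Finally, the extension assertion follows by a routine density argument: for each fixed $t>0$ the map $w\mapsto A_p^\beta e^{-tA_p}\nabla\cdot w$ is linear on the $\R^N$-valued elements of $C^\infty_0(\Omega)$ and, by the estimate just established, continuous for the $L^p(\Omega)\to L^p(\Omega)$ norm; since $C^\infty_0(\Omega)$ is dense in $L^p(\Omega)$, it admits a unique bounded linear extension to all of $L^p(\Omega)$, which still obeys \eqref{001}. I do not expect a genuine obstacle anywhere: the only points requiring care are the justification of the consistency/duality of $\{e^{-tA_p}\}_{t>0}$ and $\{A_p^\beta\}_{\beta\ge 0}$ across the $L^p$-scale, which is classical, and the (trivial) observation that the compact support of $w$ kills the boundary contribution in the integration by parts. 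The small parameter $\epsilon>0$ is genuinely needed with the tools at hand, because the embedding $X_{p'}^{\,\gamma}\hookrightarrow W^{1,p'}(\Omega)$ requires the strict inequality $\gamma>\tfrac12$.
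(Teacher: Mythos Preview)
Your argument is correct; the paper itself does not give a proof but merely cites \cite[Lemma 2.1]{HoWi}, and your duality-plus-fractional-power-embedding argument is essentially the proof given there. The only remark worth adding is that the consistency/duality of $A_p^\beta$ and $e^{-tA_p}$ across the $L^p$-scale, which you correctly flag as the one point requiring justification, is indeed standard for the Neumann realization of $-\Delta+\mu I$ (it is symmetric on $L^2$ and its $L^p$-realizations form a compatible family), so no gap arises.
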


\begin{proof}
It follows from \cite[Lemma 2.1]{HoWi}.
\end{proof}

\begin{remark}
\label{fractional-power-space-rk}
We remark that  the fractional powers of $A_p$ are used to prove the boundedness of the $C^\theta(\bar\Omega)$-norm of the solutions of \eqref{main-eq} for some $\theta>0$.
Such techniques are also used in \cite{HoWi} (see \cite[Theorem 4.1]{HoWi}).
It should be pointed out that we can prove the boundedness of the $L^p(\Omega)$-norm of
the solutions of \eqref{main-eq} for any $p\ge 1$ via the standard estimates for the Neumann heat semigroup  $\{e^{t\Delta}\}_{t\ge 0}$ in $\Omega$
such as
$$
\|e^{t\Delta}u\|_{L^q(\Omega)}\le C \Big(1+t^{-\frac{N}{2}\big(\frac{1}{p}-\frac{1}{q}\big)}\Big)\|u\|_{L^p(\Omega)},
$$
$$
\|\nabla e^{t\Delta}u\|_{L^q(\Omega)}\le C \Big(1+t^{-\frac{1}{2}-\frac{N}{2}\big(\frac{1}{p}-\frac{1}{q}\big)}\Big) e^{-\lambda_1 t}\|u\|_{L^p(\Omega)},
$$
for some $C>0$, all $t>0$, $u\in L^p(\Omega)$, and $q>p$ (see \cite[Lemma 1.3]{Win} for some  other estimates for the Neumann heat semigroup $\{e^{t\Delta}\}_{t\ge 0}$).
Due to the presence of the chemotaxis in \eqref{main-eq}, some estimates for the fractional powers of $A_p$ or some  other arguments such as bootstrap arguments are needed to get the $C^\theta(\bar\Omega)$-boundedness of the solutions of \eqref{main-eq} for some $\theta>0$.
\end{remark}

We then present   some lemmas on the lower and upper bounds of  the solutions of
 \begin{equation}
\label{w-eq}
    \begin{cases}
    -\Delta w +\mu w=\nu u +\lambda v, \quad &x\in \Omega \cr
    \frac{\p w}{\p n}=0, \quad & x\in \p \Omega,
    \end{cases}
\end{equation}
where $\mu,\nu$, and $\lambda$ are positive constants.
For given $u,v\in L^p(\Omega)$, let $w(\cdot;u,v)$ be the solution of \eqref{w-eq}.

\begin{lemma}
\label{pre-lm-1}
Let $u,v \in C^0(\bar \Omega)$ be nonnegative function such that $\int_{\Omega} u >0$ and $\int_{\Omega} v >0$.
Then
\begin{equation*}
    w (x;u,v)\ge \delta_0  \int_{\Omega} (u + v) >0 \quad {\rm in} \quad \Omega,
\end{equation*}
where $\delta_0$ is some positive constant independent of $u,v$.
\end{lemma}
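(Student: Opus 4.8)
The plan is to obtain the pointwise lower bound for $w(\cdot;u,v)$ via the integral (heat kernel / Green's function) representation of the solution of the elliptic problem \eqref{w-eq}. First I would invoke standard elliptic theory: since $\nu u+\lambda v\in C^0(\bar\Omega)$ is nonnegative and not identically zero, \eqref{w-eq} admits a unique classical solution $w(\cdot;u,v)\in C^2(\bar\Omega)$, and by the strong maximum principle together with the Hopf lemma (or, equivalently, by positivity of the Neumann Green's function $G_\mu(x,y)$ of $-\Delta+\mu$) we have $w(x;u,v)>0$ for all $x\in\bar\Omega$. The key quantitative point is that $G_\mu(x,y)\ge 0$ and that $w(x;u,v)=\int_\Omega G_\mu(x,y)\,(\nu u(y)+\lambda v(y))\,dy$. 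Since $\min\{\nu,\lambda\}\,(u+v)\le \nu u+\lambda v$ pointwise, this gives
\[
w(x;u,v)\ge \min\{\nu,\lambda\}\int_\Omega G_\mu(x,y)\,(u(y)+v(y))\,dy .
\]

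The crux is then to bound $G_\mu(x,y)$ below by a positive constant independent of $x,y$. This is where I would use that $\Omega$ is bounded and smooth: the Neumann Green's function for $-\Delta+\mu$ on such a domain satisfies $G_\mu(x,y)\ge c_\Omega>0$ uniformly for $x,y\in\bar\Omega$, for some constant $c_\Omega$ depending only on $\Omega$, $\mu$, $N$. One clean way to see this without Green's-function technology: let $\phi$ solve $-\Delta\phi+\mu\phi=1$ in $\Omega$ with $\partial\phi/\partial n=0$; then $\phi\equiv 1/\mu$, which is a positive constant. More usefully, comparing $w$ with a sub-solution: since $\nu u+\lambda v\ge 0$ we only get $w\ge 0$ directly, so instead I would use the explicit fact that for the function $\psi_y$ solving $-\Delta\psi_y+\mu\psi_y=\mathbf 1_{B(y,r)\cap\Omega}$ (a fixed small ball) one has, by the strong maximum principle and compactness of $\bar\Omega\times\bar\Omega$ off the diagonal handled by continuity, a uniform lower bound $\psi_y(x)\ge c>0$; writing $u(y)+v(y)$ against a covering of $\Omega$ by such balls and averaging recovers $w(x;u,v)\ge \delta_0\int_\Omega(u+v)$. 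Setting $\delta_0=\min\{\nu,\lambda\}\cdot c_\Omega$ finishes the argument; note $\delta_0$ depends only on $\Omega,N,\mu,\nu,\lambda$, hence is independent of $u,v$ as claimed.

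The main obstacle, and the only place requiring care, is establishing the uniform positive lower bound for $G_\mu$ (equivalently, the uniform-in-$y$ lower bound for $\psi_y$). This is genuinely a statement about the geometry of $\Omega$: it holds because $\bar\Omega$ is connected and compact and $-\Delta+\mu$ with Neumann data has a positivity-preserving, irreducible resolvent, so the Green's function is strictly positive on $\bar\Omega\times\bar\Omega$ and, being continuous away from the diagonal with the expected singular (hence large) behavior on the diagonal, attains a positive minimum $c_\Omega$. I expect the authors' actual proof to be a short argument along exactly these lines — either citing positivity of the Neumann Green kernel or running a direct maximum-principle comparison — and I would present it in that compact form rather than developing Green's function estimates from scratch.
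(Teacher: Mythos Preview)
Your approach is correct, and it is close in spirit to the paper's but differs in the key technical ingredient. The paper's proof is a one-line citation: it invokes the argument of \cite[Lemma~2.1]{FuWiYo} together with the Gaussian lower bound for the Neumann heat kernel on a smooth bounded domain. Concretely, one writes
\[
w(x;u,v)=\int_0^\infty e^{-\mu t}\big(e^{t\Delta}(\nu u+\lambda v)\big)(x)\,dt,
\]
restricts the integral to, say, $t\in[1,2]$, and uses that the Neumann heat kernel $p(t,x,y)$ admits a uniform positive lower bound for $(t,x,y)\in[1,2]\times\bar\Omega\times\bar\Omega$; this immediately gives $w(x;u,v)\ge c\int_\Omega(\nu u+\lambda v)\ge c\min\{\nu,\lambda\}\int_\Omega(u+v)$. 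Your route---bounding the elliptic Green's function $G_\mu$ below directly via the strong maximum principle, continuity off the diagonal, and the singular (hence large) behavior on the diagonal---reaches the same conclusion and is arguably more self-contained, since it avoids heat-kernel estimates altogether. The heat-kernel route, by contrast, makes the constant $\delta_0$ more explicit and plugs directly into the parabolic machinery that the cited literature already develops. Either argument is acceptable; your intermediate detour through the auxiliary functions $\psi_y$ is unnecessary once you have the clean Green's-function argument of your final paragraph.
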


\begin{proof}
It follows from the arguments of  \cite[Lemma 2.1]{FuWiYo} and the
Gaussian lower bound for the heat kernel of the laplacian with Neumann boundary
condition on smooth domain (see \cite[Theorem 4]{ChOuYa}).
\end{proof}

\begin{lemma}
\label{pre-lm-4}
For any $p\ge 1$, there exists $C_p>0$ such that
\begin{equation*}
 \max\Big\{\|w(\cdot;u,v)\|_{L^p(\Omega)},   \| \nabla w(\cdot;u,v) \|_{L^p(\Omega)}\Big\} \leq C_p  \|u(\cdot)+v(\cdot)\|_{L^p(\Omega)}\quad \forall u,v\in L^p(\Omega).
\end{equation*}
\end{lemma}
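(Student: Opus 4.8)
The plan is to prove the $L^p$-bound for $w$ and $\nabla w$ in terms of $\|u+v\|_{L^p}$ by reducing the problem to a single scalar elliptic equation. First I would set $z = \nu u + \lambda v$, so that $w = w(\cdot;u,v)$ solves the standard Neumann problem $-\Delta w + \mu w = z$ in $\Omega$ with $\partial w/\partial n = 0$ on $\partial\Omega$. Since $\min\{\nu,\lambda\}(u+v) \le z \le \max\{\nu,\lambda\}(u+v)$ pointwise and $u,v \ge 0$ is not assumed here (but $z \in L^p(\Omega)$ regardless), we have $\|z\|_{L^p(\Omega)} \le \max\{\nu,\lambda\}\,\|u+v\|_{L^p(\Omega)}$; actually it suffices to note $\|z\|_{L^p} \le (\nu+\lambda)\|\,|u|+|v|\,\|_{L^p}$, but for the nonnegative solutions we care about one simply writes $\|z\|_{L^p} \le \max\{\nu,\lambda\}\|u+v\|_{L^p}$.

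Next I would invoke standard $L^p$ elliptic regularity (Agmon–Douglis–Nirenberg / Calderón–Zygmund estimates for the Neumann Laplacian, or equivalently the fact that $A_p = -\Delta + \mu I$ with domain $D(A_p)$ from \eqref{A-p-def} is boundedly invertible on $L^p(\Omega)$ with $0 \in \rho(A_p)$): there is $C_p' > 0$ such that $\|w\|_{W^{2,p}(\Omega)} \le C_p' \|z\|_{L^p(\Omega)}$. In particular $w = A_p^{-1} z$ and $\|w\|_{L^p(\Omega)} \le \|A_p^{-1}\|\,\|z\|_{L^p(\Omega)}$, while $\|\nabla w\|_{L^p(\Omega)} \le \|w\|_{W^{2,p}(\Omega)} \le C_p'\|z\|_{L^p(\Omega)}$ (alternatively one may use that $A_p^{1/2}$ has domain $W^{1,p}$ with equivalent norms, so $\|\nabla w\|_{L^p} \lesssim \|A_p^{1/2} A_p^{-1} z\|_{L^p} = \|A_p^{-1/2} z\|_{L^p} \le \|A_p^{-1/2}\|\,\|z\|_{L^p}$). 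Combining the two bounds and absorbing the constant $\max\{\nu,\lambda\}$ gives
\begin{equation*}
\max\{\|w(\cdot;u,v)\|_{L^p(\Omega)},\ \|\nabla w(\cdot;u,v)\|_{L^p(\Omega)}\} \le C_p \|u(\cdot)+v(\cdot)\|_{L^p(\Omega)}
\end{equation*}
with $C_p$ depending only on $p$, $\Omega$, $\mu$, $\nu$, $\lambda$.

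Finally I would note that there are no serious obstacles here: this is essentially a citation of standard elliptic theory, and the only mild care needed is (i) to handle $p = 1$ separately if one insists on it — for $p = 1$ the $W^{2,1}$ estimate fails, so one instead uses the explicit representation $w = (\mu I - \Delta)^{-1} z$ via the Neumann heat semigroup, $w = \int_0^\infty e^{-\mu s} e^{s\Delta} z\, ds$, together with the $L^1 \to L^1$ contractivity of $e^{s\Delta}$ to get $\|w\|_{L^1} \le \mu^{-1}\|z\|_{L^1}$, and the smoothing estimate $\|\nabla e^{s\Delta}\|_{L^1 \to L^1} \lesssim s^{-1/2}$ to get $\|\nabla w\|_{L^1} \le \big(\int_0^\infty e^{-\mu s} C s^{-1/2}\,ds\big)\|z\|_{L^1} < \infty$; and (ii) to make sure the constant is uniform over all admissible $u,v$, which is automatic since it only enters through $\|u+v\|_{L^p}$. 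The most "delicate" point, if any, is simply packaging the gradient estimate, for which the fractional-power characterization $D(A_p^{1/2}) = W^{1,p}(\Omega)$ (Lemma \ref{pre-lm-2}(ii) with $m=1$, $\beta = 1/2$, $q = p$) is the cleanest route and keeps everything within the semigroup framework already set up in this section.
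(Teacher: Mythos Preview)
Your proposal is correct and takes essentially the same approach as the paper, which simply cites standard $L^p$-estimates for elliptic equations (specifically \cite[Theorem 12.1]{Ama}) in a single line. Your version is more detailed and, in particular, handles the borderline case $p=1$ via the semigroup representation, a point the paper's one-line citation does not address explicitly.
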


\begin{proof}
It follows from $L^p$-estimates for elliptic equations (see \cite[Theorem 12.1]{Ama}).
\end{proof}

\begin{lemma}
\label{pre-lm-6}
For any nonnegative $u,v\in C(\bar\Omega)$,
\begin{equation*}
    \int_{\Omega}\frac{|\nabla w(\cdot;u,v)|^2}{w(\cdot;u,v)^2} \leq \mu |\Omega|.
\end{equation*}
\end{lemma}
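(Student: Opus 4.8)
The plan is to test the elliptic equation \eqref{w-eq} with the function $1/w$ and integrate by parts. Since $u,v\ge 0$, the right–hand side $\nu u+\lambda v$ is nonnegative; we may assume $u+v\not\equiv 0$ (otherwise $w\equiv 0$ and the statement is vacuous), so that $\nu u+\lambda v\not\equiv 0$. By standard $L^p$–elliptic estimates (as in \cite[Theorem 12.1]{Ama}), $w\in W^{2,p}(\Omega)$ for every $p<\infty$, hence $w\in C^1(\bar\Omega)$, and by the strong maximum principle together with the Hopf boundary-point lemma $w>0$ on $\bar\Omega$; in particular $\inf_{\bar\Omega} w>0$, so $1/w$ is an admissible test function.

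Dividing $-\Delta w+\mu w=\nu u+\lambda v$ by $w$ and integrating over $\Omega$ yields
\[
-\int_\Omega \frac{\Delta w}{w}\,dx+\mu|\Omega|=\int_\Omega \frac{\nu u+\lambda v}{w}\,dx\ge 0 .
\]
Integrating by parts and using the Neumann boundary condition $\frac{\partial w}{\partial n}=0$ on $\partial\Omega$, together with $\nabla\!\big(\tfrac1w\big)=-\tfrac{\nabla w}{w^2}$, gives
\[
\int_\Omega \frac{\Delta w}{w}\,dx=-\int_\Omega \nabla w\cdot\nabla\!\Big(\frac1w\Big)\,dx=\int_\Omega \frac{|\nabla w|^2}{w^2}\,dx .
\]
Combining the two displays,
\[
\int_\Omega \frac{|\nabla w|^2}{w^2}\,dx=\mu|\Omega|-\int_\Omega \frac{\nu u+\lambda v}{w}\,dx\le \mu|\Omega|,
\]
because the subtracted integral is nonnegative, which is exactly the asserted bound.

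There is no substantive obstacle here; the only point that deserves care is the justification sketched in the first paragraph — that $w$ is regular enough and bounded away from $0$ so that the division by $w$ and the integration by parts are legitimate — and this follows from routine elliptic regularity and the maximum principle. (If one prefers to avoid invoking strict positivity of $w$ directly, the same conclusion can be reached by first working with $w+\varepsilon$ in place of $w$, performing the above computation, and then letting $\varepsilon\to 0^+$ using Fatou's lemma on the left-hand side.)
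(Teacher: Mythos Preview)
Your proof is correct and follows essentially the same approach as the paper: multiply the elliptic equation by $1/w$, integrate by parts using the Neumann boundary condition, and drop the nonnegative term $\int_\Omega(\nu u+\lambda v)/w$. Your additional remarks on regularity and strict positivity of $w$ (and the $w+\varepsilon$ alternative) merely flesh out details the paper leaves implicit.
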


\begin{proof}
Multiplying \eqref{w-eq} by $\frac{1}{w}$ and integrating it over $\Omega$ yields that
\begin{align*}
    0= \int_{\Omega} \frac{1}{w} \cdot \Big( \Delta w -\mu w + \nu u + \lambda v\Big)
    = \int_{\Omega}\frac{|\nabla w|^2}{w^2} - \mu |\Omega| + \nu \int_{\Omega}\frac{u}{w} +\lambda \int_{\Omega}\frac{v}{w}
\end{align*}
for  all $t \in (0,T_{\max})$. The lemma thus follows.
\end{proof}

Throughout the rest of this section, we assume that $u_0(x)$ and $v_0(x)$ satisfies \eqref{initial-cond-eq} and
$(u(t,x),v(t,x),w(t,x)):=(u(t,x;u_0,v_0),v(t,x;u_0,v_0),w(t,x;u_0,v_0))$ is the unique classical solution of \eqref{main-eq} on the maximal interval $(0,T_{\max}):=(0,T_{\max}(u_0,v_0))$ with the initial condition $(u(0,x),v(0,x))=(u_0(x), v_0(x))$.
Note that
$$
u(t,x), v(t,x), w(t,x)>0\quad \forall\, x\in\Omega,\,\, t\in (0,T_{\max}).
$$
We may drop $(t,x)$ in $u(t,x)$, $v(t,x)$, and $w(t,x)$ if no confusion occurs.

We now present some upper bound for $\int_\Omega u(t,x;u_0,v_0)dx$ and $\int_\Omega v(t,x;u_0,v_0)dx$.

\begin{lemma}
\label{pre-lm-5} For any $\tau\in [0,T_{\max})$, { the followings hold.}
    \begin{equation}
\label{new-u-upper-bound-eq}
    \int_{\Omega} u (t,x;u_0,v_0)dx \leq m^*_1(\tau,u_0,v_0):= {\rm max}\Big\{\int_{\Omega} u(\tau,x;u_0,v_0)dx,  \frac{a_{1}|\Omega|}{b_{1}} \Big\},
\end{equation}
{ and}
\begin{equation}
\label{new-v-upper-bound-eq}
    \int_{\Omega} v (t,x;u_0,v_0)dx \leq m^*_2(\tau,u_0,v_0):= {\rm max}\Big\{\int_{\Omega} v(\tau,x;u_0,v_0)dx,  \frac{a_{2}|\Omega|}{b_{2}} \Big\},
\end{equation}
{ as well as}
\begin{align}
\label{new-u+v-upper-bound-eq}
    & \int_{\Omega} \big(u(t,x;u_0,v_0)+v (t,x;u_0,v_0)\big)dx \nonumber\\
&\leq m^*(\tau,u_0,v_0):= {\rm max}\Big\{\int_{\Omega}(u(\tau,x;u_0,v_0)+ v(\tau,x;u_0,v_0))dx,  \frac{a_{\max}|\Omega|}{\min\{b_{\min},c_{\min}\}} \Big\}
\end{align}
for any $t\in [\tau,T_{\max})$,
where $|\Omega|$ is the Lebesgue measure of $\Omega$.
Moreover, if $T_{\max}(u_0,v_0)=\infty$, then
\begin{equation}
\label{new-u-v-upper-bd-eq}
\limsup_{t\to\infty}\int_\Omega u(t,x;u_0,v_0)dx\le \frac{a_1|\Omega|}{b_1}, \quad \limsup_{t\to\infty} \int_\Omega v(t,x;u_0,v_0)dx\le \frac{a_2|\Omega|}{b_2},
\end{equation}
and
\begin{equation}
\label{new-u-v-upper-bd-eq1}
\limsup_{t\to\infty}\int_\Omega ( u(t,x;u_0,v_0)+v(t,x;u_0,v_0))dx\le  \frac{a_{\max}|\Omega|}{\min\{b_{\min},c_{\min}\}}.
\end{equation}
\end{lemma}

\begin{proof}
By integrating the first equation in \eqref{main-eq} with respect to $x$, we get that
\begin{align*}
\frac{d}{dt} \int_{\Omega}u&=\int_{\Omega}\Delta u- \chi  \int_{\Omega} \nabla \cdot \Big(\frac{u}{w}\nabla w \Big)+a_1 \int_{\Omega}  u - b_1 \int_{\Omega}  u^2 - c_1\int_{\Omega}  v u\\
    &  \leq a_{1} \int_{\Omega}u - \frac{b_{1}}{|\Omega|} \Big(\int_{\Omega} u\Big)^2 \quad \forall \, t\in (0,T_{\max}).
\end{align*}
This together with  comparison principle for scalar ODEs implies \eqref{new-u-upper-bound-eq} and the first inequality in \eqref{new-u-v-upper-bd-eq} when $T_{\max}=\infty$.

Similarly, we can prove \eqref{new-v-upper-bound-eq} and the second inequality in \eqref{new-u-v-upper-bd-eq} when $T_{\max}=\infty$.

{ In the following, we prove \eqref{new-u+v-upper-bound-eq}.
By \eqref{new-u+v-eq1-0},
\begin{align*}
\frac{d}{d t}\int_\Omega (u+v)&=\int_\Omega (a_1u+a_2 v)-\int_\Omega (b_1u^2+b_2 v^2)-\int_\Omega (c_1+c_2)u v\\
&\le a_{\max}\int_\Omega (u+v)-b_{\min} \int_\Omega(u^2+v^2)-2c_{\min}\int_\Omega uv\\
&=a_{\min}\int_\Omega (u+v)-\min\{b_{\min},c_{\min}\}\int_\Omega (u+v)^2\\
&\le a_{\max}\int_\Omega (u+v)-\frac{\min\{b_{\min},c_{\min}\}}{|\Omega|}\Big(\int_\Omega(u+v)\Big)^2,
\end{align*}
for all $t \in (0,T_{\max}).$  This together with  comparison principle for scalar ODEs implies \eqref{new-u+v-upper-bound-eq} and  \eqref{new-u-v-upper-bd-eq1} when $T_{\max}(u_0,v_0)=\infty$.}
\end{proof}

{
\begin{lemma}
\label{new-pre-lm1}
For any $\varepsilon>0$ and $p>0$, there is $C(\varepsilon,p)>0$ such that
$$
\int_\Omega w^{p+1}\le\varepsilon \int_\Omega (u+v)^{p+1}+C(\varepsilon,p)\Big(\int_\Omega (u+v)\Big)^{p+1}  \quad\text{for all}\;\; t \in (0,T_{\max}).
$$
\end{lemma}}

{\begin{proof}
First, multiplying the third equation in \eqref{main-eq} by $w^{p}$ and integrating over $\Omega$, along with applying Young's inequality, we have
\begin{align*}
    p\int_{\Omega} w^{p-1}|\nabla w|^2 + \mu \int_{\Omega}w^{p+1} &\leq \max\{\nu,\lambda\}  \int_{\Omega}(u+v) w^{p}\\
    &\leq  \left(\frac{\max\{\nu,\lambda\}}{p+1}\right)^{p+1}\left(\frac{p}{\mu}\right)^p\int_{\Omega} (u+v)^{p+1} +\mu \int_{\Omega} w^{p+1},
\end{align*}
which implies
\begin{equation}
\label{aux-new-eq1*}
    \int_{\Omega} w^{p-1}|\nabla w|^2  \leq \frac{1}{p} \left(\frac{\max\{\nu,\lambda\}}{p+1}\right)^{p+1}\left(\frac{p}{\mu}\right)^p\int_{\Omega} (u+v)^{p+1} \quad\text{for all}\;\; t \in (0,T_{\max}).
\end{equation}
Then, for any  $\tilde \varepsilon>0,$ there are $ C_1,\tilde C_1>0$ such that

 \begin{align*}
  &  \int_{\Omega} w^{p+1}=\int_{\Omega} { (w^{\frac{p+1}{2}})^2}\\
&\le \tilde\varepsilon \int_{\Omega} |\nabla w^{\frac{p+1}{2}}|^2+C _1 \left(\int_{\Omega}  w^{\frac{p+1}{2}} \right)^{2}\quad \quad \quad \quad \quad \quad\quad \quad \quad \quad\qquad\qquad\quad\quad \quad \quad \quad \, \, \;\;\text{(by Ehrling's lemma)}\nonumber\\
   &\le \frac{\tilde\varepsilon (p+1)^2}{4}\int_{\Omega} w^{p-1}|\nabla w|^2 +
  C_1\Big( \int_\Omega w^{p}\Big)\Big(\int_\Omega w\Big) \quad \quad \quad \quad \quad \quad\quad \quad \quad\quad\quad \quad \quad \quad \text{(by H\"older's ineq.)}\nonumber\\
&\le \frac{\tilde\varepsilon (p+1)^2}{4p}\left(\frac{\max\{\nu,\lambda\}}{p+1}\right)^{p+1}\left(\frac{p}{\mu}\right)^p\int_{\Omega} (u+v)^{p+1} +
  C_1\Big( \int_\Omega w^{p}\Big)\Big(\int_\Omega w\Big) \quad \quad \quad \quad \text{(by \eqref{aux-new-eq1*})}\nonumber\\
   &\le  \frac{\tilde\varepsilon (p+1)^2}{4p}\left(\frac{\max\{\nu,\lambda\}}{p+1}\right)^{p+1}\left(\frac{p}{\mu}\right)^p\int_{\Omega} (u+v)^{p+1} +
   \frac{1}{2} \int_\Omega w^{p+1}+\tilde C_1\Big(\int_\Omega w\Big)^{p+1} \; \;\text{(by Young's ineq.)}
\end{align*}
for all $t \in (0, T_{\max})$. The lemma then follows with the fact  $\int_\Omega w \leq\frac{\max\{\nu,\lambda\}}{\mu}\int_\Omega (u+v)$ and taking $C=\tilde C_1(\frac{\max\{\nu,\lambda\}}{\mu})^{p+1}$ and $\tilde \varepsilon =\frac{2p}{(p+1)^2}(\frac{p+1}{\max\{\nu,\lambda\}})^{p+1}(\frac{\mu}{p})^p \varepsilon.$
\end{proof}}

\section{Proofs of Theorems \ref{new-main-thm1}, and \ref{new-main-thm4}}

In this section, {  we investigate lower bound of the combined mass,  and $L^{-q}$-boundedness    of classical solutions of \eqref{main-eq}, } and  prove Theorems \ref{new-main-thm1},
and \ref{new-main-thm4}.

Throughout this section, for given $u_0,v_0$ satisfying   \eqref{initial-cond-eq}, we put
$$
(u(t,x),v(t,x),w(t,x)):=(u(t,x;u_0,v_0),v(t,x;u_0,v_0),w(t,x;u_0,v_0)),
$$
and if no confusion occurs, we may drop $(t,x)$ in $u(t,x)$ (resp. $v(t,x)$, $w(t,x)$).

\subsection{Lower bound of the combined mass on bounded intervals and proof of Theorem \ref{new-main-thm1}}

In this subsection, we study the lower  bound of the combined mass on bounded intervals and proof of Theorem \ref{new-main-thm1}.

We   first present a lemma on $\int_\Omega\ln(u+v)dx$. {Note that, by \eqref{positive-solu-eq},
$u(t,x;u_0,v_0)+v(t,x;u_0,v_0)>0$ for all $t\in (0,T_{\max}(u_0,v_0))$ and $x\in\bar\Omega$,. Hence $\int_\Omega\ln (u(t,x;u_0,v_0)+v(t,x;u_0,v_0))dx$ is well defined for $t\in (0,T_{\max}(u_0,v_0))$.}

\begin{lemma}
\label{mass-persistence-lm1} For any $u_0(x)$ and $v_0(x)$ satisfying  \eqref{initial-cond-eq},
there exists $K=K(u_0,v_0)>0$ such that
\begin{equation*}
    \frac{d}{dt} \int_{\Omega} \ln\big(u(t,x;u_0,v_0)+v(t,x;u_0,v_0)\big)dx \ge -K \quad \text{\rm for all}\,\, t\in (0,T_{\max}(u_0,v_0)).
\end{equation*}
\end{lemma}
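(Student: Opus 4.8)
The plan is to drop the individual densities and work with the combined density $s := u+v$, estimating $\frac{d}{dt}\int_\Omega \ln s\,dx$ directly. The point will be that the diffusion term $\Delta s$ produces an entropy-type dissipation $\int_\Omega |\nabla s|^2/s^2$ strong enough to absorb the singular chemotaxis contribution, while the competitive kinetics are controlled from below by the a priori $L^1$ bound already recorded in Lemma~\ref{pre-lm-5}.

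First I would add the first two equations of \eqref{main-eq} to obtain
\[
s_t = \Delta s - \nabla\cdot\Big(\frac{\chi_1 u + \chi_2 v}{w}\,\nabla w\Big) + u(a_1 - b_1 u - c_1 v) + v(a_2 - b_2 v - c_2 u) \quad\text{in }(0,T_{\max})\times\Omega,
\]
together with $\partial s/\partial n = 0$ on $\partial\Omega$. Since $s>0$ and $w>0$ throughout $(0,T_{\max})\times\bar\Omega$ (positivity of $w$ being part of Lemma~\ref{pre-lm-1}), the quantity $\int_\Omega \ln s\,dx$ is finite and differentiable in $t$. I would then write $\frac{d}{dt}\int_\Omega \ln s\,dx = \int_\Omega \frac{s_t}{s}\,dx$ and integrate by parts using the Neumann conditions for $s$ and $w$, arriving at
\[
\frac{d}{dt}\int_\Omega \ln s\,dx = \int_\Omega \frac{|\nabla s|^2}{s^2}\,dx - \int_\Omega \frac{\chi_1 u + \chi_2 v}{s}\,\frac{\nabla s\cdot\nabla w}{s\,w}\,dx + \int_\Omega \frac{u(a_1-b_1u-c_1v) + v(a_2-b_2v-c_2u)}{s}\,dx.
\]

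Next I would estimate the three terms from below. The first is nonnegative. For the chemotaxis term I would use the uniform bound $0 < (\chi_1 u + \chi_2 v)/s \le \chi_{\max} := \max\{\chi_1,\chi_2\}$ together with Cauchy--Schwarz and Young's inequality to dominate its integrand in absolute value by $\frac12|\nabla s|^2/s^2 + \frac{\chi_{\max}^2}{2}|\nabla w|^2/w^2$, so that, invoking Lemma~\ref{pre-lm-6} (which gives $\int_\Omega |\nabla w|^2/w^2 \le \mu|\Omega|$), this term is at least $-\frac12\int_\Omega |\nabla s|^2/s^2\,dx - \frac{\chi_{\max}^2\mu|\Omega|}{2}$; the surviving $\frac12\int_\Omega|\nabla s|^2/s^2\,dx \ge 0$ is then discarded. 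For the kinetic term, the part $\int_\Omega (a_1 u + a_2 v)/s\,dx$ is nonnegative and is discarded, while the remaining (negative) part satisfies $b_1 u^2 + (c_1+c_2)uv + b_2 v^2 \le C s^2$ with $C := \max\{b_1,b_2,(c_1+c_2)/2\}$, hence is at least $-C\int_\Omega s\,dx$; by Lemma~\ref{pre-lm-5} applied with $\tau=0$ one has $\int_\Omega s(t,\cdot)\,dx \le M_0 := \max\{\int_\Omega u_0,\,a_1|\Omega|/b_1\} + \max\{\int_\Omega v_0,\,a_2|\Omega|/b_2\}$ for every $t\in[0,T_{\max})$. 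Combining the three estimates yields $\frac{d}{dt}\int_\Omega \ln s\,dx \ge -\frac{\chi_{\max}^2\mu|\Omega|}{2} - C M_0$, so one takes $K := \frac{\chi_{\max}^2\mu|\Omega|}{2} + C M_0$, which depends only on $u_0,v_0$ (through $M_0$).

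The only genuinely delicate step is the chemotaxis term, and the crux is the observation that $(\chi_1 u + \chi_2 v)/s$ is bounded by $\chi_{\max}$ uniformly in the solution: this is precisely what lets the diffusion-generated dissipation $\int_\Omega|\nabla s|^2/s^2$ absorb the cross term via Young's inequality and reduces everything to the universally controlled quantity $\int_\Omega|\nabla w|^2/w^2$ from Lemma~\ref{pre-lm-6}. The remaining ingredients — the integrations by parts, the elementary bound on the fully negative quadratic part of the kinetics, and the $L^1$ control from Lemma~\ref{pre-lm-5} — are routine.
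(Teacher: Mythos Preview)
Your proposal is correct and follows essentially the same route as the paper: multiply the $(u+v)$-equation by $1/(u+v)$, integrate by parts, absorb the chemotaxis cross term into the dissipation $\int_\Omega |\nabla s|^2/s^2$ via Young's inequality together with the bound $(\chi_1 u+\chi_2 v)/s\le \max\{\chi_1,\chi_2\}$, invoke Lemma~\ref{pre-lm-6} for $\int_\Omega |\nabla w|^2/w^2$, and control the negative kinetic part by $\int_\Omega(u+v)$ using Lemma~\ref{pre-lm-5}. The only cosmetic differences are that the paper splits Young with weight $1$ rather than $1/2$ (giving the slightly sharper constant $\mu|\Omega|\chi_{\max}^2/4$) and retains the positive contribution $a_{\min}|\Omega|$ rather than discarding it.
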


\begin{proof}
First, multiplying \eqref{new-u+v-eq1-0}  by $\frac{1}{u+v}$ and then  integrating on $\Omega$ yields that
\begin{align*}
    \frac{d}{d t} \int_{\Omega} \ln{(u+v)}\ge& \int_{\Omega} \frac{|\nabla (u+v)|^2}{(u+v)^2} - \underbrace{ \int_{\Omega} \frac{\chi_1 u + \chi_2 v}{(u+v)^2} \nabla (u+v) \cdot \frac{\nabla w}{w}}_{I} \\
    &+ \int_{\Omega} \frac{a_1 u+a_2 v}{u+v} - \int_{\Omega} \frac{b_1u^2+b_2 v^2}{u+v} - \int_{\Omega} \frac{(c_1+c_2)u v }{u+v}
\end{align*}
for all $t \in (0,T_{\max}(u_0,v_0)).$
By Young's inequality and Lemma \ref{pre-lm-6}, we have
\begin{align*}
    I=\int_{\Omega} \frac{\chi_1 u + \chi_2 v}{(u+v)^2} \nabla (u+v) \cdot \frac{\nabla w}{w} & \leq \int_{\Omega} \frac{|\nabla (u+v)|^2}{(u+v)^2}+ \frac{\max\{\chi_1^2,\chi_2^2\}}{4} \int_{\Omega}\frac{|\nabla w|^2}{w^2}\\
    & \leq \int_{\Omega} \frac{|\nabla (u+v)|^2}{(u+v)^2}+ \frac{\mu |\Omega| \max\{\chi_1^2,\chi_2^2\}}{4}.
\end{align*}
Then by Lemma \ref{pre-lm-5} we have
\begin{equation*}
    \frac{d}{dt} \int_{\Omega} \ln{(u+v)}\ge - \frac{\mu |\Omega|\max\{\chi_1^2,\chi_2^2\}}{4} +a_{\min}|\Omega|-\left(b_{\max}+c_{\max}\right) (m_1^*(0,u_0,v_0)+m_2^*(0,u_0,v_0))
\end{equation*}
for all $t\in (0,T_{\max})$.
The lemma is thus proved.
\end{proof}

We now prove Theorem \ref{new-main-thm1}.

\begin{proof}[Proof of Theorem \ref{new-main-thm1}]
 Fix a $\tau\in (0,T_{\max}(u_0,v_0))$. It is clear that
$$\inf_{0\le t\le \tau}\int_\Omega \big(u(t,x;u_0,v_0)+v(t,x;u_0,v_0)) dx>0.
$$
It then suffices to prove that  for any $T>0$,  there exist $C=C(T)>0$  such that
\begin{equation}
\label{new-new-eq1}
    \int_{\Omega} \big(u(t,x;u_0,v_0)+v(t,x;u_0,v_0)\big )dx \ge C(T) \quad \text{\rm for all}\,\, t\in (\tau, \hat T),
\end{equation}
where $\hat T=\min\{T,T_{\max}(u_0,v_0)\}$.

Note that
 $L:=\int_{\Omega} \ln{\big(u(\tau,x;u_0,v_0)+v(\tau,x;u_0,v_0)\big)}dx$ is finite. By Lemma \ref{mass-persistence-lm1},
 there exists $K=K(u_0,v_0)>0$ such that
$$ \frac{d}{dt} \int_{\Omega} \ln{\big(u(t,x;u_0,v_0)+v(t,x;u_0,v_0)\big)dx}\ge -K
$$
 for all $t \in (0,T_{\max}(u_0,v_0)) $.  We thus have that
\begin{align*}
    \int_{\Omega} \ln{\big(u(t,x;u_0,v_0)+v(t,x;u_0,v_0)\big)}dx &\ge \int_{\Omega} \ln{(u(\tau,x;u_0,v_0)+v(\tau,x;u_0,v_0))}dx-K\cdot(t-\tau)\\
    & \ge L-K\cdot(\hat T-\tau)=C(K,L,\tau
    ) \quad \text{for all}\,\, t\in (\tau,\hat T).
\end{align*}
Therefore Jensen's inequality asserts that
\begin{align*}
    \int_{\Omega} \ln{\big(u(t,x;u_0,v_0)+v(t,x;u_0,v_0)\big)}dx&=|\Omega| \cdot \int_{\Omega} \ln{\big(u(t,x;u_0,v_0)+v(t,x;u_0,v_0)\big)}\frac{dx}{|\Omega|} \\
&\leq |\Omega| \cdot \ln{\Big( \int_{\Omega} \big(u(t,x;u_0,v_0)+v(t,x;u_0,v_0)\big) \frac{dx}{|\Omega|}  \Big)}
\end{align*}
for all $ t\in (0,  T_{\max}(u_0,v_0))$,
which implies
\begin{align*}
    \int_{\Omega}\big( u(t,x;u_0,v_0)+v(t,x;u_0,v_0)\big)dx& \ge |\Omega| \cdot \exp{\Big( \frac{1}{|\Omega|} \cdot \int_{\Omega} \ln{\big(u(t,x;u_0,v_0)+v(t,x;u_0,v_0)\big)} dx   \Big)} \\
&\ge |\Omega| \cdot e^{\frac{1}{|\Omega|}\cdot C(K,L,\tau)} \quad \text{for all}\,\, t\in (\tau, \hat T),
\end{align*}
which implies \eqref{new-new-eq1}.
The theorem is  thus proved.
\end{proof}

\subsection{$L^{-q}$-boundedness and proof of  Theorem \ref{new-main-thm4}}

{ In this subsection, we study the boundedness of  $\int_\Omega(u+v)^{-q}$
 and prove Theorem \ref{new-main-thm4}.}

We first prove Theorem \ref{new-main-thm4}(1). To this end.
we first prove a lemma.

\begin{lemma}
\label{mass-persistence-thm} Let  $\mu>0$, $\chi_1>0$ and $\chi_2>0$ be given.
\begin{itemize}
\item[(1)]
For any $B>0$ and $0<\beta\not = { \chi_2-B}$,
\begin{align}
\label{new-LP-eq7-3-0}
\frac{1}{q}\frac{d}{dt}\int_\Omega (u+v)^{-q}
\le&  \Big(f(\mu,\chi_1,\chi_2,\beta,B)- a_{\min}\Big) \int_\Omega (u+v)^{-q} \nonumber\\
&+(b_{\max}+c_{\max}) \int_\Omega (u+v)^{-q+1},\quad { \forall\, t\in (0,T_{\max}(u_0,v_0)),}
\end{align}
where $f(\mu,\chi_1,\chi_2,\beta,B)$ is as in \eqref{function-f-eq} and
\begin{equation}
\label{new-p-eq0}
q=\frac{4B\beta}{B( \chi_2-B-\beta)^2+(\chi_1-\chi_2)^2\beta}.
\end{equation}

\item[(2)] For any $B>0$ and $0<\beta\not ={ \chi_1 -B}$,
\begin{align}
\label{new-LP-eq7-3-1}
\frac{1}{q}\frac{d}{dt}\int_\Omega (u+v)^{-q}
\le&  \Big(f(\mu,\chi_2,\chi_1,\beta,B)- a_{\min}\Big) \int_\Omega (u+v)^{-q} \nonumber\\
&+(b_{\max} +c_{\max})\int_\Omega (u+v)^{-q+1},\quad { \forall\, t\in (0,T_{\max}(u_0,v_0))},
\end{align}
where
\begin{equation}
\label{new-p-eq00}
q=\frac{4B\beta}{B({ \chi_1-B}-\beta)^2+(\chi_1-\chi_2)^2\beta}.
\end{equation}

\end{itemize}
\end{lemma}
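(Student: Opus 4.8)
The plan is to test the equation satisfied by the sum $z:=u+v$ against $-z^{-q-1}$ and to absorb the resulting chemotaxis term by testing the elliptic $w$-equation against $z^{-q}/w$; the parameters $B,\beta$ will appear as free weights in Young's inequality and $q$ will be chosen so that all gradient-type integrals cancel. Since we are past Theorem~\ref{main-thm3}, the solution is global, so $u,v,w>0$ for all $t>0$ and the claimed inequalities make sense on $(0,\infty)$.

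First I would add the first two equations of \eqref{main-eq} to get, for $z=u+v$,
\[
z_t=\Delta z-\nabla\cdot\Big(\frac{\chi_1u+\chi_2v}{w}\nabla w\Big)+(a_1u+a_2v)-(b_1u^2+b_2v^2)-(c_1+c_2)uv ,
\]
multiply by $-z^{-q-1}$, integrate over $\Omega$, and integrate by parts (the Neumann conditions kill the boundary terms):
\[
\frac1q\frac{d}{dt}\int_\Omega z^{-q}=-(q+1)\int_\Omega z^{-q-2}|\nabla z|^2+(q+1)\int_\Omega\frac{z^{-q-2}(\chi_1u+\chi_2v)}{w}\nabla z\cdot\nabla w+\mathcal R ,
\]
where $\mathcal R=-\int_\Omega z^{-q-1}(a_1u+a_2v)+\int_\Omega z^{-q-1}(b_1u^2+b_2v^2)+(c_1+c_2)\int_\Omega z^{-q-1}uv$. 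Using $a_1u+a_2v\ge a_{\min}z$, $b_1u^2+b_2v^2\le b_{\max}z^2$ and $(c_1+c_2)uv\le c_{\max}z^2$, one gets $\mathcal R\le-a_{\min}\int_\Omega z^{-q}+(b_{\max}+c_{\max})\int_\Omega z^{-q+1}$, so the task reduces to bounding the chemotaxis integral by $f(\mu,\chi_1,\chi_2,\beta,B)\int_\Omega z^{-q}$ after absorbing the good term $-(q+1)\int_\Omega z^{-q-2}|\nabla z|^2$.

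For that I would test the third equation of \eqref{main-eq} against $z^{-q}/w$ and integrate by parts, obtaining the weighted analogue of Lemma~\ref{pre-lm-6},
\[
\int_\Omega z^{-q}\frac{|\nabla w|^2}{w^2}+q\int_\Omega\frac{z^{-q-1}}{w}\nabla z\cdot\nabla w=\mu\int_\Omega z^{-q}-\nu\int_\Omega\frac{uz^{-q}}{w}-\lambda\int_\Omega\frac{vz^{-q}}{w}\le\mu\int_\Omega z^{-q},
\]
the last two integrals being nonnegative and hence discardable. For part~(1) I would then write $\chi_1u+\chi_2v=\chi_2z+(\chi_1-\chi_2)u$, so that the mismatch $\chi_1-\chi_2$ enters only through $0\le u\le z$; applying Young's inequality to the two gradient products with weights matched to $\beta$ and $B$, using the Cauchy--Schwarz bound $\int_\Omega\frac{z^{-q-1}}{w}|\nabla z||\nabla w|\le\big(\int_\Omega z^{-q-2}|\nabla z|^2\big)^{1/2}\big(\int_\Omega z^{-q}\frac{|\nabla w|^2}{w^2}\big)^{1/2}$, and then feeding in the weighted identity above, the coefficients of $\int_\Omega z^{-q-2}|\nabla z|^2$ and of $\int_\Omega z^{-q}|\nabla w|^2/w^2$ are made nonpositive by the choice of $q$ recorded in \eqref{new-p-eq0}, while the residual coefficient of $\int_\Omega z^{-q}$ is exactly $f(\mu,\chi_1,\chi_2,\beta,B)$; this yields \eqref{new-LP-eq7-3-0}. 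Part~(2) follows by the symmetric computation, interchanging the roles of $u$ and $v$: writing $\chi_1u+\chi_2v=\chi_1z+(\chi_2-\chi_1)v$ and using $0\le v\le z$ produces $f(\mu,\chi_2,\chi_1,\beta,B)$ and the exponent \eqref{new-p-eq00}, giving \eqref{new-LP-eq7-3-1}.

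The main obstacle is the algebraic bookkeeping in the last step: one must choose the several Young and Cauchy--Schwarz weights so that they collapse to the two parameters $B,\beta$, and verify that demanding both gradient-type integrals be absorbed forces exactly the stated value of $q$ while leaving precisely the constant $f$. This is the two-species counterpart of the one-species estimate in \cite{HKWS2}; the genuinely new ingredient is keeping track of the factor $u/z\le1$ (respectively $v/z\le1$), which is what creates the extra summand involving $(\chi_1-\chi_2)^2$ in the formulas for $q$ and $f$.
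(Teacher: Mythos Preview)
Your proposal is correct and follows essentially the same route as the paper: test the $z$-equation against $z^{-q-1}$, bound the reaction terms by $a_{\min},b_{\max},c_{\max}$, test the $w$-equation against $z^{-q}/w$ to obtain the weighted identity, split $\chi_1u+\chi_2v=\chi_2z+(\chi_1-\chi_2)u$, and then combine Young's inequalities (with weights governed by $B$ and $\beta$) with two applications of the weighted identity so that the choice \eqref{new-p-eq0} kills both gradient integrals, leaving exactly $f(\mu,\chi_1,\chi_2,\beta,B)$. The paper carries out the ``algebraic bookkeeping'' you flag in two stages---first using $B$ on the $(\chi_1-\chi_2)u$ piece to reduce to a term with coefficient $\chi_2-B$, then splitting $|\chi_2-B|=(|\chi_2-B|-\beta)+\beta$ and using Young on the first summand and the weighted identity on the second---which is precisely the mechanism your outline anticipates.
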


\begin{remark}
\label{elliptic-rk2}
{Lemma \ref{mass-persistence-thm}    plays  a crucial role in the proof of the global existence and  boundedness of the solutions of \eqref{main-eq}.}
The proof of Lemma \ref{mass-persistence-thm} strongly relies
on the fact that the third equation in \eqref{main-eq} is elliptic. Therefore  new techniques need to be developed  to investigate  the global existence and boundedness of solutions of \eqref{full-parabolic-eq}.
\end{remark}

\begin{proof} [Proof of Lemma \ref{mass-persistence-thm}]
{ In this proof, we always assume that $t\in (0,T_{\max}(u_0,v_0))$.}

(1)  First,   by \eqref{new-LP-eq0-0}, we have
\begin{align}
\label{new-LP-eq0-01}
\frac{1}{q}\frac{d}{dt}\int_\Omega (u+v)^{-q}
\le &- (q+1)\int_\Omega (u+v)^{-q-2}|\nabla (u+v)|^2\nonumber\\
&+(q+1)\chi_2 \int_\Omega
\frac{(u+v)^{-q-1}}{w}\nabla (u+v)\cdot\nabla w\nonumber\\
&+   \frac{(\chi_1-\chi_2)^2 q(q+1)}{4B}\int_\Omega
(u+v)^{-q-2}|\nabla (u+v)|^2 +\frac{B(q+1)}{q}\int_\Omega (u+v)^{-q}\frac{|\nabla w|^2}{w^2}\nonumber\\
& - a_{\min} \int_\Omega (u+v)^{-q}+(b_{\max}+c_{\max}) \int_\Omega (u+v)^{-q+1}
\end{align}
for any $q>0$ and $B>0$.

Next, multiplying the third equation in \eqref{main-eq} by $\frac{(u+v)^{-q}}{w}$ and then integrating over $\Omega$ with respect to $x$, we obtain that
\begin{align*}
    0&= \int_{\Omega} \frac{(u+v)^{-q}}{w} \cdot \big( \Delta w-\mu w +\nu u+ \lambda v\big)\\
    &= - \int_{\Omega}\frac{(-q) (u+v)^{-q-1}w \nabla (u+v) -(u+v)^{-q} \nabla w}{w^2} \cdot \nabla w\\
    &\quad - \mu \int_{\Omega} (u+v)^{-q} + \nu   \int_{\Omega}\frac{u (u+v)^{-q}}{w} + \lambda  \int_{\Omega}\frac{v (u+v)^{-q}}{w}
\end{align*}
for all ${t\in (0, T_{\max})}.$ Thus we have,
\begin{equation}
\label{new-u+v-eq2}
     q \int_{\Omega} \frac{(u+v)^{-q-1}}{w} \nabla (u+v) \cdot \nabla w + \int_{\Omega} (u+v)^{-q} \frac{|\nabla w|^2}{w^2} \le  \mu \int_{\Omega} (u+v)^{-q}
\end{equation}
for all $t\in (0,T_{\max}).$ It then follows that
$$
\frac{B}{q}\int_\Omega (u+v)^{-q}\frac{|\nabla w|^2}{w^2}\le \frac{B\mu}{q}\int_\Omega (u+v)^{-q}-B\int_\Omega \frac{(u+v)^{-q-1}}{w}\nabla(u+v)\cdot\nabla w.
$$
This together with \eqref{new-LP-eq0-01}  implies that
\begin{align}
\label{new-LP-eq0-02}
\frac{1}{q}\frac{d}{dt}\int_\Omega(u+v)^{-q} \le &- (q+1)\int_\Omega (u+v)^{-q-2}|\nabla (u+v)|^2\nonumber\\
&+(q+1)(\chi_2-B) \int_\Omega
\frac{(u+v)^{-q-1}}{w}\nabla (u+v)\cdot\nabla w\nonumber\\
&+   \frac{(\chi_1-\chi_2)^2 q(q+1)}{4B}\int_\Omega
(u+v)^{-q-2}|\nabla (u+v)|^2 \nonumber\\
&+ \frac{B(q+1)\mu}{q}\int_\Omega (u+v)^{-q}\nonumber\\
& - a_{\min} \int_\Omega (u+v)^{-q}+(b_{\max}+c_{\max}) \int_\Omega (u+v)^{-q+1}
\end{align}
for any $q>0$ and $B>0$.

Now, let ${ \tilde \chi=\chi_2-B}$.
Note that, when $B>0$ and $q>0$ are such that $D:=\frac{(\chi_1-\chi_2)^2}{4B}q<1$,  for any $\beta>0$,
by Young's inequality and \eqref{new-u+v-eq2},  we have
\begin{align*}
\tilde \chi\int_\Omega \frac{(u+v)^{-q-1}}{w}\nabla (u+v)\cdot\nabla w&=(\tilde\chi-\beta)\int_\Omega \frac{(u+v)^{-q-1}}{w}\nabla (u+v)\cdot\nabla w\nonumber\\
&\quad +\beta \int_\Omega \frac{(u+v)^{-q-1}}{w}\nabla (u+v)\cdot\nabla w\nonumber\\
&\le (1-D)\int_\Omega (u+v)^{-q-2}|\nabla(u+v)|^2 +\frac{|\tilde \chi-\beta|^2}{4(1-D)}\int _\Omega (u+v)^{-q}\frac{|\nabla w|^2}{w^2}\nonumber\\
&\quad +\frac{\beta \mu}{q}\int_\Omega(u+v)^{-q}-\frac{\beta}{q}\int_\Omega (u+v)^{-q}\frac{|\nabla w|^2}{w^2}.
\end{align*}
This together with  \eqref{new-LP-eq0-02} implies that
\begin{align}
\label{new-LP-eq0-03}
\frac{1}{q}\frac{d}{dt}\int_\Omega(u+v)^{-q}\le&  (q+1)\Big(\frac{B|\tilde \chi-\beta|^2}{4B-(\chi_1-\chi_2)^2q}-\frac{\beta}{q}\Big)\int_\Omega (u+v)^{-q}\frac{|\nabla w|^2}{w^2}\nonumber\\
&\quad  +(q+1)\Big(\frac{B\mu}{q}+\frac{\beta\mu}{q}\Big)\int_\Omega (u+v)^{-q}\nonumber\\
&  - a_{\min} \int_\Omega (u+v)^{-q}+(b_{\max}+c_{\max}) \int_\Omega (u+v)^{-q+1}
\end{align}
for any $\beta>0$ and $B>0$, $q>0$ with $\frac{(\chi_1-\chi_2)^2}{4B}{ q}<1$.

\smallskip

Finally,   for any  $B>0$ and $\beta>0$, $\beta\not =\tilde \chi$, let $q$ be as in \eqref{new-p-eq0}.
Then
\begin{equation*}
\frac{(\chi_1-\chi_2)^2}{4B}q<1,
\end{equation*}
 and
\begin{equation*}
\frac{B|\tilde \chi-\beta|^2}{4B-(\chi_1-\chi_2)^2 q}-\frac{\beta}{q}= 0.
\end{equation*}
Then by \eqref{new-LP-eq0-03},  we have
\begin{align*}
\frac{1}{q}\frac{d}{dt}\int_\Omega (u+v)^{-q}
\le&  \Big(({B\mu+\beta \mu})\Big(1+\frac{1}{q}\Big)- a_{\min}\Big) \int_\Omega (u+v)^{-q}\nonumber\\
& +(b_{\max}+c_{\max}) \int_\Omega (u+v)^{-q+1}.
\end{align*}
Note that
$$
(B\mu+\beta \mu)\Big(1+\frac{1}{q}\Big)=f(\mu,\chi_1,\chi_2,\beta, B).
$$
This implies that \eqref{new-LP-eq7-3-0} holds.

(2) It can be proved by the similar arguments as in (1).
\end{proof}

{
\begin{remark}
\label{q-rk1}
\begin{itemize}

\item[(1)]  {If  $(\chi_1-\chi_2)^2\le { 4\chi_2}$, letting  $B=\chi_2$ and $q=1$},
 by  \eqref{new-LP-eq0-02}, we have
\begin{align*}
\frac{1}{q}\frac{d}{dt}\int_\Omega (u+v)^{-1}
\le&  \Big(2\mu \chi_2- a_{\min}\Big) \int_\Omega (u+v)^{-1} \nonumber\\
&+(b_{\max}+c_{\max}) |\Omega|,\quad { \forall\, t\in (0,T_{\max}(u_0,v_0)).}
\end{align*}
Similarly, { if $(\chi_1-\chi_2)^2\le 4\chi_1$, }
 letting $B=\chi_1$ and $q=1$, we have
\begin{align*}
\frac{1}{q}\frac{d}{dt}\int_\Omega (u+v)^{-1}
\le&  \Big(2\mu \chi_1- a_{\min}\Big) \int_\Omega (u+v)^{-1} \nonumber\\
&+(b_{\max}+c_{\max}) |\Omega|,\quad { \forall\, t\in (0,T_{\max}(u_0,v_0)).}
\end{align*}

\item[(2)]
Assume that $(\chi_1-\chi_2)^2>{ \max \{4\chi_1,4\chi_2\}}$.  Let  $B=\chi_2$ and $q=\frac{4\chi_2}{(\chi_1-\chi_2)^2}$.  By
\eqref{new-LP-eq0-02} again, we have
\begin{align*}
\frac{1}{q}\frac{d}{dt}\int_\Omega (u+v)^{-q}
\le&  \Big(\mu \chi_2+\frac{\mu (\chi_1-\chi_2)^2}{4}- a_{\min}\Big) \int_\Omega (u+v)^{-q} \nonumber\\
&+(b_{\max}+c_{\max}) \int_\Omega u^{-q+1},\quad { \forall\, t\in (0,T_{\max}(u_0,v_0)).}
\end{align*}
Similarly,    let $B=\chi_1$ and $q=\frac{4\chi_1}{(\chi_1-\chi_2)^2}$.  We have
\begin{align*}
\frac{1}{q}\frac{d}{dt}\int_\Omega (u+v)^{-q}
\le&  \Big(\mu \chi_1+\frac{\mu (\chi_1-\chi_2)^2}{4}- a_{\min}\Big) \int_\Omega (u+v)^{-q} \nonumber\\
&+(b_{\max}+c_{\max})\int_\Omega u^{-q+1},\quad {\forall\, t\in (0,T_{\max}(u_0,v_0)).}
\end{align*}

\end{itemize}
\end{remark}
}

\begin{proof}[Proof of Theorem \ref{new-main-thm4}(1)]
{In this proof, we always assume that $t\in (0,T_{\max}(u_0,v_0))$ unless specified otherwise.}

  Without loss of generality, we assume that
$$
\chi^*(\mu,\chi_1,\chi_2)=\min\{\chi_1^*(\mu,\chi_1,\chi_2),\chi_2^*(\mu,\chi_1,\chi_2)\}=\chi_1^*(\mu,\chi_1,\chi_2).
$$
Let ${\epsilon_0}=a_{\min}-\chi^*(\mu,\chi_1,\chi_2)$.
Then by the definition of $\chi_1^*(\mu,\chi_1,\chi_2)$, there are $B>0$ and $0<\beta\not= { \chi_2-B}$ such that
$$
a_{\min}-f(\mu,\chi_1,\chi_2,\beta, B)>\frac{3\epsilon_0}{4}.
$$
By  \eqref{new-LP-eq7-3-0}, we then have
\begin{align}
\label{new-LP-eq7-00}
\frac{1}{q}\frac{d}{dt}\int_\Omega (u+v)^{-q}
\le -\frac{3{ \epsilon_0}}{4} \int_\Omega (u+v)^{-q} +(b_{\max}+c_{\max}) \int_\Omega (u+v)^{-q+1}
\end{align}
for all $t>0$,
where  $q$ is as in  \eqref{new-p-eq0}.

Note that if $q\ge 1$, by  Young's inequality,
\begin{align*}
    (b_{\max} +c_{\max})\int_\Omega (u+v)^{-q+1} \leq& \frac{{ \epsilon_0}}{4} \int_\Omega (u+v)^{-q}+C_{\epsilon_0},
\end{align*}
{where $C_{\epsilon_0}=\frac{(b_{\max}+c_{\max})^q}{q}\big(\frac{4}{\epsilon_0}\big)^q\big(\frac{q-1}{q}\big)^{q-1}|\Omega|$ if $q>1$ and $C_{\epsilon_0}=(b_{\max}+c_{\max})|\Omega|$ if $q=1$.}
If $q< 1$, then by the H\"older inequality
\begin{align}
\label{b-c-max-2-0}
    (b_{\max}+c_{\max}) \int_\Omega (u+v)^{-q+1} \leq&  (b_{\max} +c_{\max})|\Omega|^q \cdot \left(\int_{\Omega} u+v\right)^{1-q}\nonumber\\
 \leq & (b_{\max}+c_{\max}) |\Omega|^q  (m^*(\tau,u_0,v_0))^{1-q}
\end{align}
for any { $ 0< \tau<t<T_{\max}(u_0,v_0)$, where $m^*(\tau,u_0,v_0)$ is as in \eqref{new-u+v-upper-bound-eq}.}
By \eqref{new-LP-eq7-00}-\eqref{b-c-max-2-0},   there is $\tilde C_{\epsilon_0, \tau,u_0,v_0}>0$ such that
\begin{equation*}
\frac{1}{q}\frac{d}{dt}\int_\Omega (u+v)^{-q}
\le  -\frac{\epsilon_0}{2}  \int_\Omega (u+v)^{-q}+{  C_{\epsilon_0, \tau,u_0,v_0}}\quad \forall\, { 0<\tau<t<T_{\max}(u_0,v_0)},
\end{equation*}
{ where
\begin{equation}
    \label{c-eps-tau-1}
    C_{\epsilon_0, \tau,u_0,v_0}=\begin{cases}
        \frac{(b_{\max}+c_{\max})^q}{q}\big(\frac{4}{\epsilon_0}\big)^q\big(\frac{q-1}{q}\big)^{q-1}|\Omega| \quad &\text{if} \;\; q > 1, \cr
        (b_{\max}+c_{\max})|\Omega|\quad &\text{if} \;\; q = 1, \cr
        (b_{\max}+c_{\max})|\Omega|^q (m^*(\tau,u_0,v_0))^{1-q}  \quad &\text{if} \;\; q <1.
    \end{cases}
\end{equation}}
This implies that
\begin{align}
\label{new-LP-eq7-4}
&\int_\Omega (u(x,t;u_0,v_0)+v(x,t;u_0,v_0))^{-q}dx\nonumber\\
& \le e^{-\frac{\epsilon_0 q}{2}(t-\tau)} \int_\Omega(u(x,\tau;u_0,v_0)+v(x,\tau;u_0,v_0))^{-q}dx+ { 2   C_{\epsilon_0,\tau,u_0,v_0} \epsilon_0^{-1}}
\end{align}
for any $\tau>0$ and  ${\tau<t<T_{\max}(u_0,v_0)}$.  { Theorem \ref{new-main-thm3}(1)  thus follows in the case that $a_{\min}>\chi^*(\mu,\chi_1,\chi_2)$.}
\end{proof}

 Next, we prove Theorem \ref{new-main-thm4}(2).

\begin{proof}[Proof of Theorem \ref{new-main-thm4}(2)]
{ In this proof, we always assume that $t\in (0,T_{\max}(u_0,v_0))$ unless specified otherwise.}

  Assume that $a_{\min}>\chi^{**}(\mu,\chi_1,\chi_2)$.  Let $q=q_{\chi_1,\chi_2}$. By Remark \ref{q-rk1}, we have
\begin{equation}
\label{aux-new-LP-eq1}
\frac{1}{q}\frac{d}{dt}\int_\Omega(u+v)^{-q} \le -\big(a_{\min}-\chi^{**}(\mu,\chi_1,\chi_2)\big)\int_\Omega(u+v)^{-q}+(b_{\max}+c_{\max})\int_\Omega (u+v)^{-q+1}.
\end{equation}
Note that $q_{\chi_1,\chi_2}\le 1$. By the arguments of \eqref{new-LP-eq7-4}, we have
\begin{align*}
&\int_\Omega (u(x,t;u_0,v_0)+v(x,t;u_0,v_0))^{-q}dx\nonumber\\
& \le e^{-(a_{\min}-\chi^{**}(\mu,\chi_1,\chi_2)) { q} (t-\tau)} \int_\Omega(u(x,\tau;u_0,v_0)+v(x,\tau;u_0,v_0))^{-q}dx+ \tilde   C_{\tau,u_0,v_0},
\end{align*}
where
\begin{equation*}
\tilde C_{\tau,u_0,v_0}=
\begin{cases}
\frac{(b_{\max}+c_{\max})|\Omega|}{a_{\min}-\chi^{**}(\mu,\chi_1,\chi_2)}\,\,\quad  &{\rm if}\;\; q=1,\cr\cr
\frac{(b_{\max}+c_{\max})|\Omega|^{q}\big(m^*(\tau,u_0,v_0)\big)^{1-q}}{a_{\min}-\chi^{**}(\mu,\chi_1,\chi_2)}  \,\,\quad  &{\rm if}\;\; q<1.
\end{cases}
\end{equation*}
Theorem \ref{new-main-thm3}(2)  follows in the case that $a_{\min}>\chi^{**}(\mu,\chi_1,\chi_2)$.
\end{proof}

\begin{remark}
\label{q-rk2}  Assume that
$a_{\min}>\chi^{**}(\mu,\chi_1,\chi_2)$.
By \eqref{aux-new-LP-eq1},  we also have
\begin{align*}
&\int_\Omega( u+v)^{-q_{\chi_1,\chi_2}}(t,x)dx\nonumber\\
&\le
\begin{cases}
\max\Big\{\int_\Omega( u+v)^{-q_{\chi_1,\chi_2}}(\tau,x)dx,\frac{(b_{\max}+c_{\max})|\Omega|}{a_{\min}-\chi^{**}(\mu,\chi_1,\chi_2)}\Big\}\cr
\qquad \quad\qquad\qquad {\rm if}\; (\chi_1-\chi_2)^2\le { \max\{4\chi_1,4\chi_2\}}\cr\cr
\max\Big\{\int_\Omega (u+v)^{-q_{\chi_1,\chi_2}}(\tau,x)dx, \frac{(b_{\max}+c_{\max})|\Omega|^{q_{\chi_1,\chi_2}}\big(m^*(\tau,u_0,v_0)\big)^{1-q_{\chi_1,\chi_2}}}{a_{\min}-\chi^{**}(\mu,\chi_1,\chi_2)}  \Big\}\cr
\qquad \quad\qquad\qquad  {\rm if}\; (\chi_1-\chi_2)^2>{ \max \{4\chi_1,4\chi_2\}}
\end{cases}
\end{align*}
for all $0<\tau\le t<T_{\max}(u_0,v_0)$.
\end{remark}

\section{$L^p$- and $C^\theta$-boundedness and proof of Theorem \ref{new-main-thm2}}

In this section, we study the  $L^p$- and $C^\theta$-boundedness of classical solutions of \eqref{main-eq} and prove Theorem \ref{new-main-thm2}.

\subsection{Proof of Theorem \ref{new-main-thm2}(1)}

In this subsection, we prove Theorem \ref{new-main-thm2}(1).
To this end, we first make some observations.
In the following, again  we always assume that
$t\in (0, T_{\max}(u_0,v_0))$ unless specified otherwise.

Fix $p>2$. First, observe that
\begin{align}
\label{new-new-u+v-eq1}
\frac{1}{p}\frac{d}{dt}\int_\Omega (u+v)^{p}=& -(p-1)\int_\Omega (u+v)^{p-2}|\nabla (u+v)|^2\nonumber\\
& +(p-1)\int_\Omega (u+v)^{p-2}\frac{\chi_1 u+\chi_2 v}{w} \nabla(u+v)\cdot\nabla w\nonumber\\
& +\int_\Omega (u+v)^{p-1}(a_1u+a_2 v)-\int_\Omega (u+v)^{p-1}(b_1 u^2+b_2 v^2)-\int_\Omega (u+v)^{p-1}(c_1+c_2) uv\nonumber\\
\le &  -(p-1)\int_\Omega (u+v)^{p-2}|\nabla (u+v)|^2\nonumber\\
& +(p-1)\int_\Omega (u+v)^{p-2}\frac{\chi_1 u+\chi_2 v}{w} \nabla(u+v)\cdot\nabla w\nonumber\\
& +a_{\max} \int_\Omega (u+v)^{p} - \min\{b_{\min},c_{\min}\} \int_\Omega (u+v)^{p+1}.
\end{align}

To prove Theorem \ref{new-main-thm2}(1), it is then essential to provide proper estimates for the integral   $\int_\Omega (u+v)^{p-2}\frac{\chi_1 u+\chi_2 v}{w} \nabla(u+v)\cdot\nabla w$.

Next, observe  that
\begin{align*}
     \int_\Omega \frac{(u+v)^{p-2}(\chi_1 u+\chi_2 v)}{w} \nabla(u+v)\cdot\nabla w & =\chi_2 \int_\Omega \frac{(u+v)^{p-1}}{w}  \nabla(u+v)\cdot\nabla w\nonumber\\
     &\quad + (\chi_1-\chi_2) \int_\Omega \frac{(u+v)^{p-2}}{w} u \nabla(u+v)\cdot\nabla w.
\end{align*}
{ Since $u^2 \leq (u+v)^2,$ by  Young's inequality,
we have that
\begin{align*}
  &  \big|(\chi_1-\chi_2)\int_\Omega \frac{(u+v)^{p-2}}{w} u \nabla(u+v)\cdot\nabla w\big|\nonumber\\
 &\leq \frac{1}{2} \int_\Omega (u+v)^{p-2} |\nabla (u+v)|^2 + \frac{(\chi_1-\chi_2)^2}{2} \int_\Omega (u+v)^{p} \frac{|\nabla w|^2}{w^2}
\end{align*}
and
$$
|\chi_2\int_\Omega \frac{(u+v)^{p-1}}{w}\nabla (u+v)\cdot \nabla w|\le \frac{1}{2}\int_\Omega (u+v)^{p-2}|\nabla(u+v)|^2+\frac{\chi_2^2}{2}\int_\Omega (u+v)^p\frac{|\nabla w|^2}{w^2}.
$$
Therefore,
\begin{align}
\label{new-new-u+v-eq2}
  &\int_\Omega \frac{(u+v)^{p-2}(\chi_1 u+\chi_2 v)}{w} \nabla(u+v)\cdot\nabla w\nonumber\\
&\le \int_\Omega (u+v)^{p-2}|\nabla(u+v)|^2+\frac{\chi_2^2+(\chi_1-\chi_2)^2}{2}\int_\Omega (u+v)^p\frac{|\nabla w|^2}{w^2}\nonumber\\
&\le \int_\Omega (u+v)^{p-2}|\nabla (u+v)|^2+\frac{\chi_2^2+(\chi_1-\chi_2)^2}{2\displaystyle\inf_{x\in\Omega}w(t,x)}\int_\Omega (u+v)^p\frac{|\nabla w|^2}{w}\nonumber\\
&\le \int_\Omega (u+v)^{p-2}|\nabla (u+v)|^2+
\frac{\chi_2^2+(\chi_1-\chi_2)^2}{2\displaystyle\inf_{x\in\Omega}w(t,x)}\Big(\int_\Omega (u+v)^{p+1}\Big)^{\frac{p}{p+1}}\Big(\int_\Omega \frac{|\nabla w|^{2p+2}}{w^{p+1}}\Big)^{\frac{1}{p+1}}.
\end{align}
{Similarly,} we have
\begin{align}
\label{new-new-u+v-eq3}
  &\int_\Omega \frac{(u+v)^{p-2}(\chi_1 u+\chi_2 v)}{w} \nabla(u+v)\cdot\nabla w\nonumber\\
&\le \int_\Omega (u+v)^{p-2}|\nabla (u+v)|^2+
\frac{ \chi_1^2+(\chi_1-\chi_2)^2}{2\displaystyle\inf_{x\in\Omega}w(t,x)}\Big(\int_\Omega (u+v)^{p+1}\Big)^{\frac{p}{p+1}}\Big(\int_\Omega \frac{|\nabla w|^{2p+2}}{w^{p+1}}\Big)^{\frac{1}{p+1}}.
\end{align}
}

{ To provide proper estimates for  $\int_\Omega (u+v)^{p-2}\frac{\chi_1 u+\chi_2 v}{w} \nabla(u+v)\cdot\nabla w$,
it is then essential to provide  proper estimates  for $\int_\Omega \frac{|\nabla w|^{2p+2}}{w^{p+1}}$.
We have the following  proposition on estimates for   $\int_\Omega \frac{|\nabla w|^{2p}}{w^{k}}$. }

\begin{proposition}
\label{new-main-prop2}
Let $p \ge  3$ and $p-\sqrt{2p-3} < k <  p+\sqrt{2p-3}.$  There are $M(p,k)>0$ and $\tilde M(p,k)>0$ such that
 \begin{equation*}
   \int_{\Omega} \frac{|\nabla w|^{2p}}{w^{k}} \leq M(p,k)\int_{\Omega} \frac{(\nu u+\lambda v)^p}{w^{k-p}}  + \tilde M(p,k) \int_{\Omega} w^{2p-k}\quad \forall\,\, t \in (0, T_{\rm max}(u_0,v_0)).
\end{equation*}
\end{proposition}

\begin{remark}
\label{elliptic-rk1}
We remark that the estimate in Proposition \ref{new-main-prop2} plays a crucial role in the proof of the boundedness of the solutions of \eqref{main-eq}.
The proof of Proposition \ref{new-main-prop2} strongly relies
on the fact that the third equation in \eqref{main-eq} is elliptic. It therefore requires new techniques to investigate  the global existence and boundedness of solutions of \eqref{full-parabolic-eq}.
\end{remark}

In the rest of this subsection, we  prove Theorem \ref{new-main-thm2}(1)  by applying Proposition \ref{new-main-prop2}.
 Proposition \ref{new-main-prop2} can be proved by the
similar arguments as those in  { \cite[Proposition 1.3]{HKWS}}. For the reader's convenience, we will provide the outline of the proof of   Proposition  \ref{new-main-prop2} in  the appendix.

\begin{proof} [Proof of Theorem \ref{new-main-thm2}(1)]
 { First,  for any $p>\max\{2,N\}$,
by   \eqref{new-new-u+v-eq1},  \eqref{new-new-u+v-eq2},  and    \eqref{new-new-u+v-eq3},  we have
\begin{align}
\label{aux-u+v-eq1}
\frac{1}{p}\frac{d}{dt}\int_\Omega (u+v)^{p}&  \le \frac{(p-1)\big(\chi_i^2+(\chi_1-\chi_2)^2\big)}{2\displaystyle\inf_{x\in\Omega}w(t,x)} \Big(\int_\Omega (u+v)^{p+1}\Big)^{\frac{p}{p+1}}\Big(\int_\Omega\frac{|\nabla w|^{2p+2}}{w^{p+1}}\Big)^{\frac{1}{p+1}}\nonumber\\
&\quad +a_{\max} \int_\Omega (u+v)^{p} - \min\{b_{\min},c_{\min}\} \int_\Omega (u+v)^{p+1}
\end{align}
for all $t\in (0,T_{\max}(u_0,v_0))$ and $i=1,2$.
Let
$$
C^*(p)=M(p+1,p+1),\quad C^{**}(p)=\tilde M(p+1,p+1),
$$
and
$$
\tilde \chi= \min\Big\{\chi_1^2+(\chi_1-\chi_2)^2,\chi_2^2+(\chi_1-\chi_2)^2\Big\}.
$$
By \eqref{aux-u+v-eq1} and Proposition \ref{new-main-prop2}, we have
\begin{align}
\label{aux-u+v-eq2}
\frac{1}{p}\frac{d}{dt}\int_\Omega (u+v)^{p}
&  \le  \frac{\tilde\chi { (p-1)}}{2\displaystyle\inf_{x\in\Omega}w(t,x)} \Big(\int_\Omega (u+v)^{p+1}\Big)^{\frac{p}{p+1}}\Big(C^*(p)\int_\Omega{ ( \nu u+\lambda v)}^{p+1}+C^{**}(p) \int_\Omega w^{p+1}\Big)^{\frac{1}{p+1}}\nonumber\\
&\quad +a_{\max} \int_\Omega (u+v)^{p} - \min\{b_{\min},c_{\min}\} \int_\Omega (u+v)^{p+1}\nonumber\\
&\le \frac{\tilde\chi  { (p-1)}\big(C^*(p)\big)^{\frac{1}{p+1}}{ \max\{\nu,\lambda\}}}{2\displaystyle\inf_{x\in\Omega}w(t,x)}\int_\Omega (u+v)^{p+1}\nonumber\\
&
+\frac{\tilde \chi { (p-1)} \big(C^{**}(p)\big)^{\frac{1}{p+1}}}{2\displaystyle\inf_{x\in\Omega}w(t,x)}\Big(\int_\Omega (u+v)^{p+1}\Big)^{\frac{p}{p+1}}\Big(\int_\Omega  w^{p+1}\Big)^{\frac{1}{p+1}}\nonumber\\
&\quad +a_{\max} \int_\Omega (u+v)^{p} - \min\{b_{\min},c_{\min}\} \int_\Omega (u+v)^{p+1}.
\end{align}

Next,
by {\bf (H2)} and Remark \ref{q-rk2},
$$
\int_\Omega (u+v)^{-q^*}\le \frac{(b_{\max}+c_{\max})|\Omega|^{q^*}\big(m^*\big)^{1-q^*}}{\big(a_{\min}-\chi^{**}(\mu,\chi_1,\chi_2)\big){{ C_{\chi_1,\chi_2}^{ q^*}}}} ,\,\forall\,\, \tau_0\le t<T_{\max}(u_0,v_0).
$$
where  $q^*=q_{\chi_1,\chi_2}$, $\tau_0\in [0,T_{\max}(u_0,v_0))$ is as  in {\bf (H2)}, and
$$
m^*= \frac{a_{\max}|\Omega|}{\min\{b_{\min},c_{\min}\}}.
$$ Let
$$
 \delta^*=
\frac{\delta_0 |\Omega| (a_{\min}-{ \chi^{**}(\mu,\chi_1,\chi_2)})^{\frac{1}{q^*}}\big(\min\{b_{\min},c_{\min}\}\big)^{\frac{1}{q^*}-1} }{(b_{\max}+c_{\max})^{\frac{1}{q^*}}\big(a_{\max}\big)^{\frac{1}{q^*}-1} }.
$$
Then by  Lemma \ref{pre-lm-1} and \eqref{lower-bound-u+v-eqq0},  we have
\begin{align*}
\inf_{x\in\Omega} w(t,x)&\ge \delta_0\int_\Omega (u+v)\ge \frac{\delta_0 |\Omega|^{\frac{q^*+1}{q^*}}}{\Big(\int_\Omega (u+v)^{-q^*}\Big)^{\frac{1}{q^*}}}\ge \delta^*{ C_{\chi_1,\chi_2}}\quad\forall\, \tau_0\le t<T_{\max}(u_0,v_0).
\end{align*}
By Lemma \ref{new-pre-lm1},  for any $\varepsilon_1>0$, there is $C(\varepsilon_1,p)>0$ such that
$$
\int_\Omega w^{p+1}\le\varepsilon_1 \int_\Omega (u+v)^{p+1}+C(\varepsilon_1,p)\Big(\int_\Omega (u+v)\Big)^{p+1}  { \quad \text{for all}\;\; 0< t<T_{\max}(u_0,v_0).}
$$
By Young's inequality,  any $\varepsilon_2>0$, there is $\tilde C(\varepsilon_2,p)>0$ such that
$$
\Big(\int_\Omega (u+v)^{p+1}\Big)^{\frac{p}{p+1}} \Big(\int_\Omega (u+v)\Big)\le
\varepsilon_2\int_\Omega (u+v)^{p+1}+\tilde C(\varepsilon_2,p)\Big(\int_\Omega (u+v)\Big)^{p+1}.
$$
Then, by \eqref{aux-u+v-eq2}, we  have
\begin{align*}
\frac{1}{p}\frac{d}{dt}\int_\Omega (u+v)^{p}
&\le \frac{\tilde \chi { (p-1)}\big(C^*\big)^{\frac{1}{p+1}}{ \max\{\nu,\lambda\}}}{2\delta^*{ C_{\chi_1,\chi_2}}}\int_\Omega (u+v)^{p+1}
+\frac{\tilde  \chi {(p-1)} \big(C^{**}\big)^{\frac{1}{p+1}}\varepsilon_1^{\frac{1}{p+1}}}{2\delta^*{C_{\chi_1,\chi_2}}}\int_\Omega (u+v)^{p+1}\nonumber\\
&\quad  +\frac{ \tilde \chi { (p-1)} \big(C^{**}\big)^{\frac{1}{p+1}}\big(C(\varepsilon_1,p)\big)^{\frac{1}{p+1}}}{2\delta^*{C_{\chi_1,\chi_2}}}\Big(\int_\Omega (u+v)^{p+1}\Big)^{\frac{p}{p+1}}\Big(\int_\Omega  (u+v)\Big)\nonumber\\
&\quad +a_{\max} \int_\Omega (u+v)^{p} - \min\{b_{\min},c_{\min}\} \int_\Omega (u+v)^{p+1}\nonumber\\
&\le \frac{\tilde \chi { (p-1)} \big(C^*\big)^{\frac{1}{p+1}}{\max\{\nu,\lambda\}}}{2\delta^*{ C_{\chi_1,\chi_2}}}\int_\Omega (u+v)^{p+1}\nonumber\\
&\quad
+\frac{\tilde \chi { (p-1)}\big(C^{**}\big)^{\frac{1}{p+1}}\big(\varepsilon_1^{\frac{1}{p+1}}+\big(C(\varepsilon_1,p)\big)^{\frac{1}{p+1}}\varepsilon_2\big)
}{2\delta^*{ C_{\chi_1,\chi_2}}}\int_\Omega (u+v)^{p+1}\nonumber\\
&\quad +\frac{\tilde  \chi { (p-1)} \big(C^{**}\big)^{\frac{1}{p+1}}\big(C(\varepsilon_1,p)\big)^{\frac{1}{p+1}}\tilde C(\varepsilon_2,p)}{2\delta^*{C_{\chi_1,\chi_2}}}\Big(\int_\Omega (u+v)\Big)^{p+1}\nonumber\\
&\quad +a_{\max} \int_\Omega (u+v)^{p} - \min\{b_{\min},c_{\min}\} \int_\Omega (u+v)^{p+1}.
\end{align*}

Now,
by {\bf (H1)}, there are  $p^*>\max\{2,N\}$, $\varepsilon_1^*>0$ and
$\varepsilon_2^*>0$ such that
$$
\min\big\{b_{\min},c_{\min}\}>\frac{\tilde \chi { (p-1)}\big(C^*\big)^{\frac{1}{p^*+1}}{ \max\{\nu,\lambda\}}}{2\delta^*{ C_{\chi_1,\chi_2}}}+\frac{\tilde \chi { (p-1)}\big(C^{**}\big)^{\frac{1}{p^*+1}}\big((\varepsilon_1^*)^{\frac{1}{p^*+1}}+\big(C(\varepsilon_1^*,p^*)\big)^{\frac{1}{p^*+1}}\varepsilon_2^*\big)
}{2\delta^*{ C_{\chi_1,\chi_2}}}.
$$
Therefore, there is $b^*>0$ such that
\begin{align*}
\frac{1}{p^*}\frac{d}{dt}\int_\Omega(u+v)^{p*}\le& \frac{\tilde \chi{ (p-1)} \big(C^{**}\big)^{\frac{1}{p+1}}\big(C(\varepsilon_1^*,p^*)\big)^{\frac{1}{p^*+1}}\tilde C(\varepsilon_2^*,p^*)}{2\delta^*{ C_{\chi_1,\chi_2}}}\Big(\int_\Omega (u+v)\Big)^{p^*+1}\\
&+a_{\max} \int_\Omega(u+v)^{p^*}-b^*\int_\Omega(u+v)^{p^*+1}\quad \forall\, \tau_0\le t<T_{\max}(u_0,v_0).
\end{align*}
This implies that there is $M_1'>0$ independent of $(u_0,v_0)$  such that
$$
\frac{d}{dt}\int_\Omega (u+v)^{p^*}\le -\int_\Omega (u+v)^{p^*}+M_1'\quad \forall\,\, \tau_0<t<T_{\max}(u_0,v_0).
$$
Hence
\begin{equation}
\label{new-lp-bound-eq1}
\int_\Omega (u+v)^p(t,x;u_0,v_0)\le e^{-(t-\tau_0)}\int_\Omega (u+v)(\tau_0,x;u_0,v_0)+M_1'\quad \forall\, \tau_0\le t<T_{\max}(u_0,v_0).
\end{equation}
Therefore
Theorem \ref{new-main-thm2}(1) holds with $p=p^*$, $M_1=M_1'$,  and any $\tilde p^*>1$.}

It remains to prove that Theorem \ref{new-main-thm2}(1) holds with $p=2p^*$.
To this end, let $\tilde p=2p^*$.
By \eqref{new-new-u+v-eq1}, we have
\begin{align}
\label{upper-bound-u+v-eqqq2}
\frac{1}{\tilde p}\frac{d}{dt}\int_\Omega (u+v)^{\tilde p}
\le &  -(\tilde p-1)\int_\Omega (u+v)^{\tilde p-2}|\nabla (u+v)|^2-\int_\Omega (u+v)^{\tilde p}\nonumber\\
& +(\tilde p-1)\int_\Omega (u+v)^{\tilde p-2}\frac{\chi_1 u+\chi_2 v}{w} \nabla(u+v)\cdot\nabla w\nonumber\\
& +\big(a_{\max}+1\big) \int_\Omega (u+v)^{\tilde p} - \min\{b_{\min},c_{\min}\} \int_\Omega (u+v)^{\tilde p+1}.
\end{align}
Note that
\begin{equation}
\label{new-added--eq1}
\int_\Omega (u+v)^{\tilde p}\le \frac{\tilde p}{\tilde p+1}\int_\Omega (u+v)^{\tilde p+1}
+\frac{|\Omega|}{\tilde p+1}.
\end{equation}
Note also that
\begin{align}
\label{new-added-eq2}
&\int_\Omega (u+v)^{\tilde p-2}\frac{\chi_1 u+\chi_2 v}{w} \nabla(u+v)\cdot\nabla w\nonumber\\
&\le
(\chi_1+\chi_2)\int_\Omega\frac{ (u+v)^{\tilde p-1}}{w}|\nabla(u+v)| \, |\nabla w|\nonumber\\
&\le \frac{1}{2}\int_\Omega (u+v)^{\tilde p-2}|\nabla (u+v)|^2 +\frac{(\chi_1+\chi_2)^2}{2}\int_\Omega (u+v)^{\tilde p}\frac{|\nabla w|^2}{w^2}\nonumber\\
&\le  \frac{1}{2}\int_\Omega (u+v)^{\tilde p-2}|\nabla (u+v)|^2 +\frac{(\chi_1+\chi_2)^2}{2}\frac{\tilde p+1}{\tilde p}\int_\Omega (u+v)^{\tilde p+1}
\nonumber\\
&\quad + \frac{(\chi_1+\chi_2)^2}{2}\frac{1}{\tilde p+1}\int_\Omega\frac{|\nabla w|^{2\tilde p+2}}{w^{2\tilde p+2}}.
\end{align}
By Proposition  \ref{new-main-prop2}, we have
\begin{equation}
\label{new-added-eq3}
\int_\Omega \frac{|\nabla w|^{2\tilde p+2}}{w^{2\tilde p +2}}\le
\frac{1}{{\Big(\inf_{x\in\Omega}w(t,x)\Big)^{\tilde p+1}}}\Big( M(\tilde p+1,\tilde p+1)\int_\Omega (u+v)^{\tilde p+1}+\tilde M(\tilde p+1,\tilde p+1)\int_\Omega w^{\tilde p+1}\Big)
\end{equation}
By  {\bf (H2)}  and Theorem \ref{new-main-thm4}(2),
\begin{equation*}
\int_\Omega(u+v)^{-q}\le  e^{-(a_{\min}-\chi^{**}(\mu,\chi_1,\chi_2)) {q }(t-\tau)} \int_\Omega(u(x,\tau_0;u_0,v_0)+v(x,\tau_0;u_0,v_0))^{-{ q}}dx+ \tilde   C_1
\end{equation*}
for all  $\tau_0<t<T_{\max}(u_0,v_0)$,
where { $q=q_{\chi_1,\chi_2}$ and}
\begin{equation*}
\tilde C_1=
\frac{(b_{\max}+c_{\max})|\Omega|}{a_{\min}-\chi^{**}(\mu,\chi_1,\chi_2)}\Big(\frac{a_{\max}}{\min\{b_{\min},c_{\min}\}}\Big)^{1-q}.
\end{equation*}
This together with \eqref{lower-bound-u+v-eqq0}   implies that
\begin{align}
\label{new-added-eq4}
\frac{1}{\inf_{x\in\Omega} w(t,x)}&\le \frac{1}{\delta_0}\frac{1}{\int_\Omega (u(t,x;u_0,v_0)+v(t,x;u_0,v_0))}\nonumber\\
&\le \frac{1}{\delta_0 |\Omega|^{\frac{q+1}{q}}} \Big(\int_\Omega(u(t,x;u_0,v_0)+v(t,x;u_0,v_0))^{-q}\Big)^{\frac{1}{q}}\nonumber\\
&\le  \frac{1}{\delta_0|\Omega|^{q+1}{q}}\Big( e^{-(a_{\min}-\chi^{**}(\mu,\chi_1,\chi_2)) {q }(t-\tau)} \int_\Omega(u(x,\tau_0;u_0,v_0)+v(x,\tau_0;u_0,v_0))^{-{ q}}dx+ \tilde   C_1\Big)
\end{align}
for $\tau_0<t<T_{\max}(u_0,v_0)$.
By Lemma \ref{pre-lm-4}, there is $\tilde C_2>0$ such that
\begin{equation}
\label{upper-bound-u+v-eqqq7}
\int_\Omega w^{\tilde p+1}\le \tilde C_2\int_\Omega (u+v)^{\tilde p+1}.
\end{equation}
By {\bf (H2)} and \eqref{upper-bound-u+v-eqqq2}-\eqref{upper-bound-u+v-eqqq7},
 there is $\tilde C_2>0$  independent of $(u_0,v_0)$ such that
\begin{equation*}
\frac{1}{\tilde p}\cdot\frac{d}{dt}\int_\Omega (u+v)^{\tilde p}\le -\frac{\tilde p-1}{2}\int_\Omega (u+v)^{\tilde p-2}|\nabla (u+v)|^2 -\frac{1}{p}\int_\Omega (u+v)^{\tilde p}+\tilde C_2\int_\Omega (u+v)^{\tilde p+1}+\tilde C_2
\end{equation*}
for all $\tau_0<t<T_{|max}(u_0,v_0)$.
By    Gagliardo-Nirenberg inequality,  there are  $\tilde C_3>0$, $\tilde C_4>0$, and $\tilde C_5>0$ such that
\begin{align*}
\int_\Omega (u+v)^{\tilde p+1}&=\int_\Omega\big((u+v)^{\frac{\tilde p}{2}}\Big)^{\frac{2(\tilde p+1)}{\tilde p}}=\|(u+v)^{\frac{\tilde p}{2}}\|_{L^{\frac{2(\tilde p+1)}{\tilde p}}}^{\frac{2(\tilde p+1)}{\tilde p}}\\
&\le \Big( \tilde C_3\|\nabla (u+v)^{\frac{\tilde p}{2}}\|_{L^2}^\theta \|(u+v)^{\frac{\tilde p}{2}}\|_{L^1}^{1-\theta}
+\tilde C_3\|(u+v)^{\frac{\tilde p}{2}}\|_{L^1}\Big)^{\frac{2(\tilde p+1)}{\tilde p}}\\
&\le\tilde  C_4 \|\nabla (u+v)^{\frac{\tilde p}{2}}\|_{L^2}^{\theta  \frac{2(\tilde p+1)}{\tilde p}} \|(u+v)^{\frac{\tilde p}{2}}\|_{L^1}^{(1-\theta)  \frac{2(\tilde p+1)}{\tilde p}}
+\tilde C_4\|(u+v)^{\frac{\tilde p}{2}}\|_{L^1}^{\frac{2(\tilde p+1)}{\tilde p}}\\
&\le \tilde C_5 \Big(\int_\Omega u^{\tilde p-2}|\nabla (u+v)|^2 \Big)^{\theta \frac{\tilde p+1}{\tilde p}}
\Big(\int_\Omega (u+v)^{\frac{\tilde p}{2}}\Big)^{(1-\theta)\frac{2(\tilde p+1)}{\tilde p}} +\tilde C_4 \Big(\int_\Omega (u+v)^{\frac{\tilde p}{2}}\Big)^{\frac {2(\tilde p+1)}{\tilde p}},
\end{align*}
where
\begin{equation*}
    \theta=\frac{1-\frac{\tilde p}{2(\tilde p+1)}}{\frac{1}{2}+\frac{1}{N}}=\frac{\frac{1}{2}+\frac{1}{2(\tilde p+1)}}{\frac{1}{2}+\frac{1}{N}}=\frac{\tilde p+2}{\tilde p+1}\cdot\frac{N}{N+2} \in (0,1),
\end{equation*}
and
$$
\frac{\theta (\tilde p+1)}{\tilde p}=\frac{\tilde p+2}{\tilde p+1}\cdot\frac{N}{N+2}\cdot\frac{\tilde p+1}{\tilde p}=
\frac{\tilde p N+2N}{\tilde p N+2\tilde p}<1\quad {\rm and}\quad \tilde p=2p^*.
$$
Then by Young's inequality,
 for any $0<\epsilon<\frac{\tilde p-1}{2}$, there is $\tilde C_\epsilon>0$  such that
$$
\int_\Omega (u+v)^{\tilde p+1}\le \epsilon \int_\Omega (u+v)^{\tilde p-2}|\nabla (u+v)|^2+
\tilde C_\epsilon\Big(1+\int_\Omega (u+v)^{p^*}\Big)^{\tilde p^*},
$$
where $\tilde p^*=\frac{2(1-\theta)(\tilde p+1)}{{ p-\theta(p+1)}}$.
Hence  there is $\tilde C_6>0$ independent of $(u_0,v_0)$ such that
$$
\frac{d}{dt}\int_\Omega (u+v)^{\tilde p}\le  -\int_\Omega (u+v)^{\tilde p}+\tilde C_\epsilon \tilde C_2 \Big(\int_\Omega (u+v)^{p^*}\Big)^{\tilde p^*}+\tilde C_6\quad \forall\, \tau_0<t<T_{\max}(u_0,v_0).
$$
This together with \eqref{new-lp-bound-eq1} implies that
there is $M_1''>0$ independent of $(u_0,v_0)$  such that
\begin{align}
\label{upper-bound-u+v-eqqq1}
&\int_\Omega (u+v)^{p}(t,x;u_0,v_0)dx\nonumber\\
&\le  e^{-(t-{ \tau_0}) }\int_\Omega (u+v)^{p}(\tau_0,x;u_0,v_0)dx \nonumber\\
& \quad +M_1'' e^{-(t-\tau_0)}(t-\tau_0)\Big(\int_\Omega(u+v)^{p^*}(\tau_0,x;u_0,v_0)\Big)^{\tilde p^*}+M_1'',\,\, \forall\, \tau_0<t<T_{\max}(u_0,v_0).
\end{align}
Theorem \ref{new-main-thm2}(1) then follows with $M_1=\max\{M_1',M_1''\}$.
\end{proof}

\subsection{Proof of Theorem \ref{new-main-thm2}(2)}

In this subsection, we prove Theorem \ref{new-main-thm2}(2).

\begin{proof}[Proof of Theorem \ref{new-main-thm2}(2)]
  For given $p>2N$ and $0<\theta<1-\frac{2N}{p}$, choose   $\beta\in (0,\frac{1}{2})$ such that $2\beta-\frac{2N}{p}>\theta.$ By Lemma \ref{pre-lm-2}, there is $C_{p,\beta}>0$ such that for any $u_0, v_0$ satisfying \eqref{initial-cond-eq},
\begin{equation}
\label{new-thm2-eq1}
\|u(\cdot,\cdot)+v(\cdot,\cdot)\|_{C^\theta(\bar\Omega)}\le C_{p,\beta} \|u(t,\cdot)+v(t,\cdot)\|_{X_{\frac{p}{2}}^\beta}\quad \forall\, 0<t<T_{\max}(u_0,v_0).
\end{equation}
Note that in view of \eqref{new-u+v-eq1-0}, we have that
\begin{align}
\label{new-thm2-eq2}
    \|u(t,\cdot)+v(t,\cdot)\|_{X_{\frac{p}{2}}^{\beta}} & \leq   \|A_{\frac{p}{2}}^{\beta}e^{-A_{\frac{p}{2}}(t-\tau)} (u(\tau,\cdot;u_0,v_0)+v(\tau,\cdot;u_0,v_0))\|_{L^{p/2}(\Omega)}\nonumber\\
   &\quad + \int_{\tau}^{t}\left\| A_{\frac{p}{2}}^{\beta}e^{-A_{\frac{p}{2}}(t-s)}\nabla\cdot \left(\frac{\chi_1 u(s,\cdot)+ \chi_2 v(s,\cdot) }{w(s,\cdot)}  \nabla w(s,\cdot) \right)\right\|_{L^{p/2}(\Omega)}ds\nonumber \\
    & \quad + \int_{\tau}^{t}\| A_{\frac{p}{2}}^{\beta}e^{-A_{\frac{p}{2}}(t-s)} g (u(s,\cdot),v(s,\cdot))\|_{L^{p/2}(\Omega)}ds
\end{align}
for any $0\le \tau <T_{\max}(u_0,v_0)$ and $\tau<t<T_{\max}(u_0,v_0)$,
where   $A_{\frac{p}{2}}=-\Delta+\mu I$ is defined as in \eqref{A-p-def} with $p$ being replaced by $\frac{p}{2}$, and
$$g (u,v)=(\mu+a_1)u+(\mu+a_2 )v -(b_1u^2+b_2v^2)-(c_1+c_2)uv.  $$
By Lemma \ref{pre-lm-2} again,  there is $\gamma>0$ such that
\begin{align}
\label{new-thm2-eq3}
 &\|A_{\frac{p}{2}}^{\beta}e^{-A_{\frac{p}{2}}(t-\tau)} (u(\tau,\cdot;u_0,v_0)+v(\tau,\cdot;u_0,v_0))\|_{L^{p/2}(\Omega)}\nonumber\\
&\le C_{p,\beta} (t-\tau)^{-\beta } e^{-\gamma (t-\tau)}\|u(\tau,\cdot;u_0,v_0)+v(\tau,\cdot;u_0, v_0)\|_{L^{p/2}}\quad \forall\, {\tau<t<T_{\max}(u_0,v_0)}.
\end{align}
By Lemma \ref{pre-lm-3} and  Lemma \ref{pre-lm-4},   for any $\epsilon>0$,   there are  $\gamma>0$, $C_p>0$ and $C_{p,\beta,\epsilon}>0$ such that
\begin{align}
\label{new-thm2-eq4}
&\int_{\tau }^{t}\left\| A_{\frac{p}{2}}^{\beta}e^{-A+{\frac{p}{2}}(t-s)}\nabla\cdot \left(\frac{\chi_1 u(s,\cdot)+ \chi_2 v(s,\cdot) }{w(s,\cdot)}  \nabla w(s,\cdot) \right)\right\|_{L^{p/2}(\Omega)}ds\nonumber \\
& \leq  C_{p,\beta,\epsilon}  \int_\tau^{t} (1+ (t-s)^{-\beta-\frac{1}{2}-\epsilon})e^{-\gamma(t-s)}  \left\| \frac{\chi_1u(s,\cdot)+\chi_2 v(s,\cdot)}{w(s,\cdot)} \cdot \nabla w(s,\cdot) \right\|_{L^{p/2}(\Omega)} ds\nonumber \\
&\le {C_{p,\beta,\epsilon} (\chi_1+\chi_2)} \int_\tau^{t}  \frac{ (1+(t-s)^{-\beta-\frac{1}{2}-\epsilon})e^{-\gamma(t-s)}  \left\| u(s,\cdot)+v(s,\cdot)\right\|_{L^p(\Omega)}\left\| \nabla w(s,\cdot) \right\|_{L^{p}(\Omega)} } {\displaystyle \inf_{\tau\le t<\min\{T,T_{\max}\},x\in\Omega} w(t,x)} ds\nonumber\\
&\le {C_p C_{p,\beta,\epsilon} (\chi_1+\chi_2)} \int_\tau^{t}  \frac{ (1+(t-s)^{-\beta-\frac{1}{2}-\epsilon})e^{-\gamma(t-s)}  \left\| u(s,\cdot)+v(s,\cdot)\right\|^2_{L^p(\Omega)}} {\displaystyle \inf_{\tau\le t<\min\{T,T_{\max}\},x\in\Omega} w(t,x)} ds.
\end{align}
By Lemma \ref{pre-lm-2} again,
\begin{align}
\label{new-thm2-eq5}
& \int_{\tau}^{t}\| A_{\frac{p}{2}}^{\beta}e^{-A_{\frac{p}{2}}(t-s)} g(u(s,\cdot),v(s,\cdot))\|_{L^{p/2}(\Omega)}ds\nonumber \\
     & \le C_{p,\beta} \int_\tau^{t} (t-s)^{-\beta}e^{-\gamma(t-s)}   \Big\{\mu |\Omega|^{2/p}+a_{\max} \|u(s,\cdot)+v(s,\cdot)\|_{L^{p/2}}+b_{\max}\|(u(s,\cdot)+v(s,\cdot))^2\|_{L^{p/2}}\nonumber\\
     & \quad \quad\quad \quad\quad \quad\quad \quad \quad \quad + \frac{c_{\max}}{2}\|(u(s,\cdot)+v(s,\cdot))^2  \|_{L^{p/2}}\Big\}  ds.
\end{align}
\eqref{new-infinity-bdd-eq0} then follows from \eqref{new-thm2-eq1} to \eqref{new-thm2-eq5}.
\end{proof}

\section{Proofs of Theorems \ref{new-main-thm3} and \ref{new-main-thm5}}

In this section, we study  the global existence and   uniform boundedness and  uniform  pointwise persistence of classical solutions of \eqref{main-eq}, and
 prove Theorems \ref{new-main-thm3} and \ref{new-main-thm5}.

Throughout this section, for given $u_0,v_0$ satisfying   \eqref{initial-cond-eq}, we put
$$
(u(t,x),v(t,x),w(t,x)):=(u(t,x;u_0,v_0),v(t,x;u_0,v_0),w(t,x;u_0,v_0)),
$$
and if no confusion occurs, we may drop $(t,x)$ in $u(t,x)$ (resp. $v(t,x)$, $w(t,x)$).

\subsection{Global existence and proof of Theorem \ref{new-main-thm3}}

In this subsection, we study  the global existence of classical solutions of \eqref{main-eq} and prove Theorem \ref{new-main-thm3}.

\begin{proof}[Proof of Theorem \ref{new-main-thm3}]
We prove that $T_{\max}=\infty$ by contradiction.
 Assume that $T_{\max}<\infty$.  Then by Lemma \ref{pre-lm-1} and
Theorem \ref{new-main-thm1},
there is $\delta>0$ such that
\begin{equation}
\label{delta-lower-bd-eq1}
    w(t,x) \ge \delta \quad \quad \text{\rm for all}\,\, x\in \Omega \quad \text{\rm and}\,\, t\in (0,T_{\max}),
\end{equation}
and then  by  Proposition \ref{local-existence-prop}, we have
\begin{equation*}
\limsup_{t \nearrow T_{\max}} \left\| u(t,\cdot) + v(t,\cdot) \right\|_{C^0(\bar \Omega)}  =\infty.
\end{equation*}
But,  { by \eqref{upper-bound-u+v-eqqq1},  \eqref{delta-lower-bd-eq1},  and Theorem \ref{new-main-thm2}(2)},  we have
$$
\lim_{t\nearrow T_{\max}}  \left\| u(t,\cdot) + v(t,\cdot) \right\|_{C^0(\bar \Omega)}  <\infty,
$$
which is a contradiction. Therefore $T_{\max}=\infty$.

\end{proof}

\subsection{Proof of Theorem \ref{new-main-thm5}}

In this subsection, we investigate the ultimate upper bounds of $\int_\Omega (u+v)^{-q}$,
$\int_\Omega (u+v)^p$, and $\|u+v\|_\infty$  for  globally defined classical solutions of \eqref{main-eq}, ultimate lower bound of $\inf_{x\in\Omega} (u+v)$,  and prove Theorem \ref{new-main-thm5}.

\begin{proof} [Proof of Theorem \ref{new-main-thm5}]

{ First, we prove \eqref{new-uniform-bd-eq1}.  By Theorem \ref{new-main-thm3},
$T_{\max}(u_0,v_0)=\infty$.
By the assumption {\bf (H1)}, we have $a_{\min}>\chi^{**}(\mu,\chi_1,\chi_2)$.
By {\bf (H2)} and  Theorem \ref{new-main-thm4}(2),
\begin{align}
\label{uniform-bd-proof-eq1-2}
&\int_\Omega (u(x,t;u_0,v_0)+v(x,t;u_0,v_0))^{-q}dx\nonumber\\
& \le e^{-(a_{\min}-\chi^{**}(\mu,\chi_1,\chi_2)) (t-\tau_0)} \int_\Omega(u(\tau_0,x;u_0,v_0)+v(\tau_0,x;u_0,v_0))^{-q}dx+ \tilde   C_{\tau_0,u_0,v_0},
\end{align}
for all $\tau_0<t<\infty$, where $q=q_{\chi_1,\chi_2} (\le 1)$,  and
\begin{equation*}
\tilde C_{\tau,u_0,v_0} = \frac{(b_{\max}+c_{\max})|\Omega|}{a_{\min}-\chi^{**}(\mu,\chi_1,\chi_2)} \times
\Big(\frac{a_{\max}}{\min\{b_{\min},c_{\min}\}}\Big)^{1-q}.
\end{equation*}
This together with \eqref{uniform-bd-proof-eq1-2} implies that \eqref{new-uniform-bd-eq1}
holds with $M_1^*=\frac{(b_{\max}+c_{\max})|\Omega|}{a_{\min}-\chi^{**}(\mu,\chi_1,\chi_2)} \times
\Big(\frac{a_{\max}}{\min\{b_{\min},c_{\min}\}}\Big)^{1-q}$.

\medskip

Next,  it is clear that  \eqref{new-uniform-bd-eq2} with $M_2^*=M_1$ follows from Theorem \ref{new-main-thm2}(1).

\smallskip

{ Now, we prove \eqref{new-uniform-bd-eq3}. By \eqref{w-lower-bound-eq1} and \eqref{lower-bound-u+v-eqq0},
we have
\begin{align}
\label{w-low-bdd}
w(t,x;u_0,v_0)&\ge  \delta_0\int_\Omega(u+v)(t,x;u_0,v_0)\ge \frac{\delta_0 |\Omega|^{\frac{q+1}{q}}}{\Big(\int_\Omega (u+v)^{-q}(t,x;u_0,v_0)\Big)^{\frac{1}{q}}}
\end{align}
for all $0<t<\infty$. Note that $0<\theta<\frac{N}{p^*}=\frac{2N}{2p^*}$.
By Theorem \ref{new-main-thm2}(2), we have
\begin{align*}
&\|u(t,\cdot;u_0,v_0)+v(t,\cdot;u_0,v_0)\|_{C^\theta(\bar\Omega)}\nonumber\\
&\le  M_2 \Big[ (t-\tau)^{-\beta} e^{-\gamma (t-\tau)} \|u(\tau,\cdot;u_0,v_0)+v(\tau,\cdot;u_0,v_0)\|_{L^{2 p^*}}\nonumber\\
&\,\,\,\,+\frac{\displaystyle \sup_{\tau \le t <\hat T}\|u(t,\cdot;u_0,v_0)+
v(t,\cdot;u_0,v_0)\|_{L^{2 p^*}}^2}{\displaystyle \inf_{\tau \le t< \hat T, x\in\Omega} w(t,x;u_0,v_0)}\nonumber\\
&\,\,  \,\,+\sup_{\tau \le t<\hat T}\|u(t,\cdot;u_0,v_0)+v(t,\cdot;u_0,v_0)\|_{L^{2 p^*}}+1\Big]
\end{align*}
for any $0<\tau<t<\hat T<\infty $.
By Theorem \ref{new-main-thm2}(1)  and \eqref{w-low-bdd},
and letting $\hat T-\tau\to\infty$ and
$\tau\to\infty$,  we obtain \eqref{new-uniform-bd-eq3} with
$M_3^*=M_1\Big[\frac{M_1^{\frac{1}{p^*}} (M_1^*)^{\frac{1}{q}}}{\delta_0|\Omega|^{\frac{q+1}{q}}}+M_2^{\frac{1}{2p^*}}+1\Big]$.}}

\medskip

(2) We prove  it by contradiction.  Assume that there is no $M_0^*>0$ such that \eqref{new-uniform-bd-eq4} holds.
Then for any $n\in\N$,  there are $u_n,v_n$ satisfying {\bf (H2)} such that
\begin{equation}
\label{new-set-eq7}
\liminf_{t\to\infty} \inf_{x\in\Omega}(u(t,x;0,u_n,v_n)+v(t,x;0,u_n,v_n))\le m_n:=\frac{1}{n}.
\end{equation}

By  { \eqref{new-uniform-bd-eq1}, \eqref{new-uniform-bd-eq3},  and \eqref{w-low-bdd}}, there are $T_n>0$
and $\tilde M_1^*$  such that
\begin{equation}
\label{new-set-eq9}
\int_\Omega u(t,x;u_n,v_n)+v(t,x;u_n,v_n)dx\ge {\tilde M_1^*}\quad \forall\, t\ge T_n
\end{equation}
and
\begin{equation}
\label{new-set-eq8}
\|u(t,x;u_n,v_n)+v(t,x;u_n,v_n)\|_{C^\theta(\bar\Omega)}\le 2M^*_3\quad \forall \, t\ge T_n.
\end{equation}
By \eqref{new-set-eq7}, there are $t_n\in\R$ with $t_n\ge T_n+1$ and $x_n\in\bar\Omega$ such that
\begin{equation}
\label{new-set-eq10}
u(t_n,x_n;u_n,v_n)+v(t_n,x_n;u_n,v_n)\le \frac{2}{n}.
\end{equation}
Note that
\begin{align}
\label{new-set-eq11}
u(t_n,x;u_n,v_n)&=u(1,x;u(t_n-1,\cdot;u_n,v_n), v(t_n-1, u_n,v_n))
\end{align}
and
\begin{align}
\label{new-set-eq11*}
v(t_n,x;0,u_n,v_n)&=v(1,x;u(t_n-1,\cdot;u_n,v_n),v(t_n-1,\cdot;u_n,v_n)).
\end{align}
By \eqref{new-set-eq8}, without loss of generality, we may assume that there are $u_0^*,v_0^*,u_1^*,v_1^*\in C(\bar\Omega)$ such that
\begin{equation*}
    \lim_{n\to\infty} u(t_n-1,x;u_n,v_n)=u_0^*(x),\quad   \lim_{n\to\infty} v(t_n-1,x;u_n,v_n))=v_0^*(x),
\end{equation*}
and
\begin{equation*}
    \lim_{n\to\infty} u(t_n,x;u_n,v_n)=u_1^*(x),\quad \lim_{n\to\infty} v(t_n,x;u_n,v_n))=v_1^*(x).
\end{equation*}
uniformly in $x\in\bar\Omega.$ Then, by \eqref{new-set-eq11} and \eqref{new-set-eq11*}, we have that
\begin{equation}
\label{new-set-eq12}
u_1^*(x)+v_1^*(x)=u(1,x;u_0^*,v_0^*)+v(1,x;u_0^*,v_0^*).
\end{equation}

By \eqref{new-set-eq9}, \eqref{new-set-eq8}, and the Dominated Convergence Theorem,
$$
\int_\Omega u_0^*(x)+v_0^*(x)=\lim_{n\to\infty}\int_\Omega (u(t_n-1,x;0,u_n,v_n)+v(t_n-1,x;0,u_n,v_n))dx\ge {\tilde M_1^*}.
$$
Then by \eqref{new-set-eq12} and comparison principles for parabolic equations,
$$
\inf_{x\in\bar\Omega} (u_1^*(x)+v_1^*(x))>0.
$$
This together with \eqref{new-set-eq10}  implies that
$$
\frac{2}{n}\ge \inf_{x\in \Omega} (u(t_n,x;0,u_n,v_n)+v(t_n,x;0,u_n,v_n))\ge \frac{1}{2}\inf_{x\in\Omega} (u_1^*(x)+v_1^*(x))\quad \forall\, n\gg 1,
$$
which is a contradiction.
Therefore (2) holds.
\end{proof}

\section{Appendix: {Proof of Proposition  \ref{new-main-prop2}}}

{   Proposition \ref{new-main-prop2} can be proved by the similar arguments as those in
\cite[Proposition 4.2]{HKWS}.  For the reader's convenience, we provide a  proof of Proposition \ref{new-main-prop2} in  this section.}

Throughout this section, for given $u_0,v_0$ satisfying   \eqref{initial-cond-eq}, we put
$$
(u(t,x),v(t,x),w(t,x)):=(u(t,x;u_0,v_0),v(t,x;u_0,v_0),w(t,x;u_0,v_0)),
$$
and if no confusion occurs, we may drop $(t,x)$ in $u(t,x)$ (resp. $v(t,x)$, $w(t,x)$).

We first present some lemmas.

\begin{lemma}
\label{lem-4-5}
Let $p \ge 3$  and $k \ge 2.$  Then
\begin{equation}
    \label{aaux-new-eq00}
    (k-1)\int_{\Omega}\frac{|\nabla w|^{2p}}{w^{k}} \leq (p-1) \int_{\Omega} \frac{|\nabla w|^{2p-4}}{w^{k-1}} \nabla w \cdot \nabla |\nabla w|^2 +  \mu \int_{\Omega}\frac{|\nabla w|^{2p-2}}{w^{k-2}}
\end{equation}
{for all $t \in (0, T_{\rm max}(u_0,v_0))$}.
\end{lemma}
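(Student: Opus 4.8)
The plan is to obtain the inequality from a single integration-by-parts (divergence-theorem) identity applied to the vector field $F:=\dfrac{|\nabla w|^{2p-2}}{w^{k-1}}\nabla w$ on $\Omega$, combined with the elliptic equation $-\Delta w+\mu w=\nu u+\lambda v$ and the Neumann boundary condition $\frac{\p w}{\p n}=0$ on $\p\Omega$.

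First I would record the regularity needed to apply the divergence theorem. Since $(u(t,\cdot),v(t,\cdot),w(t,\cdot))$ is a classical solution, $w(t,\cdot)\in C^2(\bar\Omega)$ and $w>0$ on $\bar\Omega$; moreover, because $p\ge 3$, the map $s\mapsto s^{p-1}$ is $C^1$ on $[0,\infty)$, so $|\nabla w|^{2p-2}=(|\nabla w|^2)^{p-1}$ is $C^1(\bar\Omega)$ and the field $F$ is $C^1(\bar\Omega;\R^N)$. The divergence theorem then gives
\[
\int_\Omega \nabla\cdot F \,dx=\int_{\p\Omega}\frac{|\nabla w|^{2p-2}}{w^{k-1}}\,\frac{\p w}{\p n}\,dS=0 .
\]
Next I would expand $\nabla\cdot F$ by the product rule, using $\nabla\big(w^{-(k-1)}\big)=-(k-1)w^{-k}\nabla w$ and $\nabla\big(|\nabla w|^{2p-2}\big)=(p-1)|\nabla w|^{2p-4}\nabla|\nabla w|^2$, to get
\[
\nabla\cdot F=\frac{|\nabla w|^{2p-2}}{w^{k-1}}\,\Delta w+(p-1)\frac{|\nabla w|^{2p-4}}{w^{k-1}}\,\nabla|\nabla w|^2\cdot\nabla w-(k-1)\frac{|\nabla w|^{2p}}{w^{k}} .
\]
Integrating this over $\Omega$ and using $\int_\Omega\nabla\cdot F=0$ yields the identity
\[
(k-1)\int_\Omega\frac{|\nabla w|^{2p}}{w^{k}}=\int_\Omega\frac{|\nabla w|^{2p-2}}{w^{k-1}}\,\Delta w+(p-1)\int_\Omega\frac{|\nabla w|^{2p-4}}{w^{k-1}}\,\nabla|\nabla w|^2\cdot\nabla w .
\]

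Finally I would close the estimate using the sign of the source term: from the third equation in \eqref{main-eq}, $\Delta w=\mu w-(\nu u+\lambda v)\le\mu w$ since $u,v\ge 0$ and $\nu,\lambda>0$, while $\frac{|\nabla w|^{2p-2}}{w^{k-1}}\ge 0$; hence
\[
\int_\Omega\frac{|\nabla w|^{2p-2}}{w^{k-1}}\,\Delta w\le\mu\int_\Omega\frac{|\nabla w|^{2p-2}}{w^{k-2}} .
\]
Substituting this into the identity above gives \eqref{aaux-new-eq00}. I do not expect a genuine obstacle here: the argument is a direct computation. The only point requiring a little care is the differentiability of $|\nabla w|^{2p-2}$ at interior critical points of $w$, which is guaranteed by $p\ge 3$ (alternatively one may replace $|\nabla w|^2$ by $|\nabla w|^2+\varepsilon$ in $F$, carry out the same computation, and let $\varepsilon\to 0^+$ by dominated convergence); the hypothesis $k\ge 2$ only serves to keep the exponent of $w$ on the right-hand side nonnegative, consistent with the later use of the lemma.
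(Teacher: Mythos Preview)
Your proof is correct and follows essentially the same approach as the paper: the paper phrases it as ``multiplying the third equation in \eqref{main-eq} by $\frac{|\nabla w|^{2p-2}}{w^{k-1}}$ and integrating over $\Omega$,'' which after one integration by parts on the $\Delta w$ term is exactly your divergence-theorem computation on $F=\frac{|\nabla w|^{2p-2}}{w^{k-1}}\nabla w$, followed by dropping the nonnegative terms $\nu\int_\Omega \frac{u|\nabla w|^{2p-2}}{w^{k-1}}$ and $\lambda\int_\Omega \frac{v|\nabla w|^{2p-2}}{w^{k-1}}$. Your explicit remarks on why $p\ge 3$ guarantees $|\nabla w|^{2p-2}\in C^1$ and on the role of $k\ge 2$ are details the paper leaves implicit.
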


\begin{proof}
Multiplying  the third equation in \eqref{main-eq} by $\frac{|\nabla w|^{2p-2}}{w^{k-1}}$ and integrating it over  $\Omega$ yields that
\begin{align*}
  & (k-1) \int_{\Omega} \frac{|\nabla w|^{2p}}{w^{k}} + \nu \int_{\Omega}\frac{u|\nabla w|^{2p-2}}{w^{k-1}} + \lambda  \int_{\Omega}\frac{v|\nabla w|^{2p-2}}{w^{k-1}}\\
&= (p-1) \int_{\Omega} \frac{|\nabla w|^{2p-4}}{w^{k-1}} \nabla w \cdot \nabla |\nabla w|^2 + \mu \int_{\Omega}\frac{|\nabla w|^{2p-2}}{w^{k-2}}
\end{align*}
for all $t\in (0,T_{\max}(u_0,v_0))$. This implies \eqref{aaux-new-eq00}.
\end{proof}

To prove Proposition \ref{new-main-prop2}, it is then essential to provide proper estimate for the integral $\int_{\Omega} \frac{|\nabla w|^{2p-4}}{w^{k-1}} \nabla w \cdot \nabla |\nabla w|^2$.
By Young's inequality,  for any $0<\varepsilon<p-\frac{3}{2}$, we have
 \begin{align}
\label{aaux-new-eq1}
\int_{\Omega} \frac{ |\nabla w|^{2p-4}}{w^{k-1}} \nabla w \cdot \nabla |\nabla w|^2 &\leq \frac
{p-\frac{3}{2}-\varepsilon}{ p+k-3}\int_{\Omega} \frac{|\nabla w|^{2p-6}}{w^{k-2}}\big |\nabla |\nabla w|^2\big|^2 \nonumber\\
    & \quad + \frac{p+k-3}{4\left(p-\frac{3}{2}-\varepsilon\right)} \int_{\Omega}\frac{|\nabla w|^{2p}}{w^{k}}.
\end{align}
By a direct calculation, we have
\begin{align}
\label{aaux-new-eq2}
    (p-2) \int_{\Omega} \frac{|\nabla w|^{2p-6}}{w^{k-2}}\big |\nabla |\nabla w|^2\big|^2 &= \int_{\partial \Omega} \frac{|\nabla w|^{2p-4}}{w^{k-2}} \frac{\partial |\nabla w|^2}{\partial \nu} + (k-2)\int_{\Omega} \frac{ |\nabla w|^{2p-4}}{w^{k-1}} \nabla w \cdot \nabla |\nabla w|^2 \nonumber\\
    & \quad -  \int_{\Omega} \frac{|\nabla w|^{2p-4}}{w^{k-2}}  \Delta |\nabla w|^2.
\end{align}
Note that
$$\Delta |\nabla w|^2=2 \nabla w \cdot \nabla(\Delta w)+{ 2|D^2 w|^2}= 2 \nabla w \cdot \nabla(\mu w-\nu u -\lambda v)+{ 2|D^2 w|^2},$$
and
\begin{equation*}
  \big |\nabla |\nabla w|^2\big|^2 = 2\sum_{i=1}^n |\nabla w \cdot\nabla w_{x_i} |^2 \leq 4 |\nabla w|^2 \sum_{i=1}^n {|\nabla w_{x_i}|^2}= 4|\nabla w|^2 |D^2w|^2.
\end{equation*}
It then follows that
\begin{align}
    \label{aaux-new-eq3}
   - \int_{\Omega} \frac{|\nabla w|^{2p-4}}{w^{k-2}}  \Delta |\nabla w|^2 & \leq 2 \int_{\Omega} \frac{|\nabla w|^{2p-4}}{w^{k-2}} \nabla (\nu u+\lambda v) \cdot \nabla w\nonumber\\
   &\quad -2 \mu \int_{\Omega} \frac{|\nabla w|^{2p-2}}{w^{k-2}}-\frac{1}{2}  \int_{\Omega} \frac{|\nabla w|^{2p-6}}{w^{k-2}} \big |\nabla |\nabla w|^2\big|^2.
\end{align}
By \eqref{aaux-new-eq2} and \eqref{aaux-new-eq3}, we have
\begin{align}
\label{aaux-new-eq4}
    \Big(p-\frac{3}{2}\Big) \int_{\Omega} \frac{|\nabla w|^{2p-6}}{w^{k-2}}\big |\nabla |\nabla w|^2\big|^2 &\le  \int_{\partial \Omega} \frac{|\nabla w|^{2p-4}}{w^{k-2}} \frac{\partial |\nabla w|^2}{\partial \nu} + (k-2)\int_{\Omega} \frac{ |\nabla w|^{2p-4}}{w^{k-1}} \nabla w \cdot \nabla |\nabla w|^2 \nonumber\\
    & \quad + 2 \int_{\Omega} \frac{|\nabla w|^{2p-4}}{w^{k-2}} \nabla (\nu u+\lambda v) \cdot \nabla w-2 \mu \int_{\Omega} \frac{|\nabla w|^{2p-2}}{w^{k-2}}.
\end{align}
To get proper estimate for $\int_{\Omega} \frac{|\nabla w|^{2p-4}}{w^{k-1}} \nabla w \cdot \nabla |\nabla w|^2$, it is then essential to provide proper estimates for the integrals $  \int_{\partial \Omega} \frac{|\nabla w|^{2p-4}}{w^{k-2}} \frac{\partial |\nabla w|^2}{\partial \nu}$ and $ \int_{\Omega} \frac{|\nabla w|^{2p-4}}{w^{k-2}} \nabla (\nu u+\lambda v) \cdot \nabla w$.

\begin{lemma}
\label{lem-4-4}
For every $\epsilon>0$, there is $C>0$ such that
\begin{equation*}
     \int_{\partial \Omega} \frac{|\nabla w|^{2p-4}}{w^{k-2}} \frac{\partial |\nabla w|^2}{\partial \nu} \leq \epsilon \int_{ \Omega} \frac{|\nabla w|^{2p-6}}{w^{k-2}}\big |\nabla |\nabla w|^2\big|^2 + C \int_{ \Omega} \frac{|\nabla w|^{2p-2}}{w^{k-2}},
\end{equation*}
for all $t\in (0,T_{\max}(u_0,v_0))$.
\end{lemma}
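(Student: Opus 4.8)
The plan is to turn the boundary integral into interior integrals in two moves: first a pointwise estimate on $\partial\Omega$ that uses only the Neumann condition $\frac{\partial w}{\partial\nu}=0$ and the smoothness of $\Omega$, and then a trace--interpolation (Ehrling-type) inequality that transfers the resulting boundary integral into $\Omega$; the term produced by differentiating the singular weight $w^{-(k-2)/2}$ will then be absorbed with the help of Lemma \ref{lem-4-5}.

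First I would record the geometric boundary estimate. Since $\frac{\partial w}{\partial\nu}=0$ on $\partial\Omega$, the vector $\nabla w$ is tangent to $\partial\Omega$ there, and a standard computation (differentiating $\nabla w\cdot\nu=0$ along tangential directions) expresses $\frac{\partial}{\partial\nu}|\nabla w|^2$ in terms of the second fundamental form of $\partial\Omega$ evaluated at $(\nabla w,\nabla w)$. Because $\Omega$ is bounded and smooth, this yields a constant $c_\Omega>0$, depending only on $\Omega$, with $\frac{\partial}{\partial\nu}|\nabla w|^2\le c_\Omega|\nabla w|^2$ pointwise on $\partial\Omega$. Hence
\[
\int_{\partial\Omega}\frac{|\nabla w|^{2p-4}}{w^{k-2}}\frac{\partial|\nabla w|^2}{\partial\nu}\ \le\ c_\Omega\int_{\partial\Omega}\frac{|\nabla w|^{2p-2}}{w^{k-2}},
\]
so it remains only to estimate $\int_{\partial\Omega}\frac{|\nabla w|^{2p-2}}{w^{k-2}}$ by interior quantities.

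For that I would set $\varphi:=|\nabla w|^{\,p-1}\,w^{-(k-2)/2}$, so that $\int_{\partial\Omega}\frac{|\nabla w|^{2p-2}}{w^{k-2}}=\int_{\partial\Omega}\varphi^2$ and $\int_\Omega\varphi^2=\int_\Omega\frac{|\nabla w|^{2p-2}}{w^{k-2}}$; note $\varphi\in C^1(\bar\Omega)\subset W^{1,2}(\Omega)$ since $p\ge 3$ and $w$ is $C^2$ and bounded away from $0$. Because the trace map $W^{1,2}(\Omega)\to L^2(\partial\Omega)$ factors through a space into which $W^{1,2}(\Omega)$ embeds compactly, for every $\eta>0$ there is $C_\eta>0$ with $\|\varphi\|_{L^2(\partial\Omega)}^2\le\eta\|\nabla\varphi\|_{L^2(\Omega)}^2+C_\eta\|\varphi\|_{L^2(\Omega)}^2$. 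A direct computation of $\nabla\varphi$ gives
\[
|\nabla\varphi|^2\ \le\ \frac{(p-1)^2}{2}\,\frac{|\nabla w|^{2p-6}}{w^{k-2}}\big|\nabla|\nabla w|^2\big|^2+\frac{(k-2)^2}{2}\,\frac{|\nabla w|^{2p}}{w^{k}},
\]
so, after choosing $\eta$ small, the first term of $|\nabla\varphi|^2$ produces exactly an $\epsilon\int_\Omega\frac{|\nabla w|^{2p-6}}{w^{k-2}}\big|\nabla|\nabla w|^2\big|^2$ contribution, while $C_\eta\|\varphi\|_{L^2(\Omega)}^2$ is of the form $C\int_\Omega\frac{|\nabla w|^{2p-2}}{w^{k-2}}$.

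The hard part will be the leftover term $\int_\Omega\frac{|\nabla w|^{2p}}{w^{k}}$ coming from the derivative of $w^{-(k-2)/2}$: no elementary pointwise inequality bounds it by the two admissible right-hand sides. Here I would invoke Lemma \ref{lem-4-5}, which controls precisely this quantity: applying Young's inequality to the cross term on its right-hand side (splitting $\frac{|\nabla w|^{2p-4}}{w^{k-1}}\nabla w\cdot\nabla|\nabla w|^2$ as the product of $\big(\frac{|\nabla w|^{2p-6}}{w^{k-2}}\big|\nabla|\nabla w|^2\big|^2\big)^{1/2}$ and $\big(\frac{|\nabla w|^{2p}}{w^{k}}\big)^{1/2}$ and optimizing the free parameter) bounds $\int_\Omega\frac{|\nabla w|^{2p}}{w^{k}}$ by a \emph{fixed} multiple of $\int_\Omega\frac{|\nabla w|^{2p-6}}{w^{k-2}}\big|\nabla|\nabla w|^2\big|^2$ plus a multiple of $\int_\Omega\frac{|\nabla w|^{2p-2}}{w^{k-2}}$. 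Feeding this back, the total coefficient of the gradient term is again a fixed constant times $\eta$, hence can be forced down to the prescribed $\epsilon$ by shrinking $\eta$, and all remaining terms are constant multiples of $\int_\Omega\frac{|\nabla w|^{2p-2}}{w^{k-2}}$, which is the desired estimate.
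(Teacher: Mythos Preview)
Your argument is correct and follows the standard route that the paper implicitly invokes by citing \cite[Lemma 4.4]{HKWS}: the pointwise inequality $\partial_\nu|\nabla w|^2\le c_\Omega|\nabla w|^2$ on $\partial\Omega$ coming from the Neumann condition, followed by the Ehrling-type $W^{1,2}$ trace estimate applied to $\varphi=|\nabla w|^{p-1}w^{-(k-2)/2}$. The one organizational difference is that you absorb the extra term $\int_\Omega|\nabla w|^{2p}w^{-k}$ (produced when $\nabla$ hits the singular weight) \emph{inside} the proof of Lemma~\ref{lem-4-4} by appealing to Lemma~\ref{lem-4-5} and Young's inequality; in the paper's layout these two lemmas are kept independent and the corresponding absorption happens only later, in the proof of Proposition~\ref{new-main-prop2}. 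Either arrangement works, and there is no circularity since Lemma~\ref{lem-4-5} is proved directly from the elliptic equation. Your use of Lemma~\ref{lem-4-5} does require $k>1$ (so that the coefficient $k-1$ on its left-hand side is positive and the Young splitting can be closed), which is harmless in the intended range $p-\sqrt{2p-3}<k$ with $p\ge 3$.
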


\begin{proof}
It follows from the similar arguments in \cite[Lemma 4.4]{HKWS}.
\end{proof}

\begin{lemma}
\label{lem-4-3}
Let $p \ge  3$ and $k \ge p.$  For every given $\varepsilon>0,$ there is $M>0$ such that
\begin{equation*}
    \int_{\Omega}\frac{|\nabla w|^{2p-4}}{w^{k-2}} \nabla (\nu u+\lambda v)  \cdot \nabla w \leq M\int_{\Omega} \frac{(\nu u+\lambda v)^p}{w^{k-p}}  + {\varepsilon} \int_{\Omega}\frac{|\nabla w|^{2p-6}}{w^{k-2}}\big |\nabla |\nabla w|^2\big|^2 + {\varepsilon} \int_{\Omega}\frac{|\nabla w|^{2p}}{w^{k}}
    \end{equation*}
for all $t \in (0, T_{\rm max}(u_0,v_0))$.
\end{lemma}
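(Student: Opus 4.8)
The plan is to integrate by parts so as to move the gradient off $\nu u+\lambda v$, then use the third equation of \eqref{main-eq} to trade the resulting Laplacian for lower order terms, and finally dispatch every term by Young's inequality. Write $z:=\nu u+\lambda v$ and $\phi:=\frac{|\nabla w|^{2p-4}}{w^{k-2}}$. Since $\frac{\p w}{\p n}=0$ on $\p\Omega$, integration by parts produces no boundary contribution, and
\begin{align*}
\int_\Omega \phi\,\nabla z\cdot\nabla w
&=-\int_\Omega z\,\nabla\cdot(\phi\nabla w)
=-\int_\Omega z\,\phi\,\Delta w-\int_\Omega z\,\nabla\phi\cdot\nabla w.
\end{align*}
Using $\nabla\phi\cdot\nabla w=\frac{(p-2)|\nabla w|^{2p-6}}{w^{k-2}}\,\nabla w\cdot\nabla|\nabla w|^2-\frac{(k-2)|\nabla w|^{2p-2}}{w^{k-1}}$ together with $\Delta w=\mu w-z$, and discarding the nonpositive term $-\mu\int_\Omega\frac{z|\nabla w|^{2p-4}}{w^{k-3}}$, I arrive (after bounding $|\nabla w\cdot\nabla|\nabla w|^2|\le|\nabla w|\,|\nabla|\nabla w|^2|$) at
\begin{align*}
\int_\Omega \phi\,\nabla z\cdot\nabla w
&\le \int_\Omega \frac{z^2|\nabla w|^{2p-4}}{w^{k-2}}
+(p-2)\int_\Omega \frac{z|\nabla w|^{2p-5}}{w^{k-2}}\big|\nabla|\nabla w|^2\big|
+(k-2)\int_\Omega \frac{z|\nabla w|^{2p-2}}{w^{k-1}}.
\end{align*}

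It then remains to estimate the three integrals on the right, each resting on an elementary homogeneity identity:
\begin{align*}
\frac{z^2|\nabla w|^{2p-4}}{w^{k-2}}&=\Big(\frac{z^p}{w^{k-p}}\Big)^{2/p}\Big(\frac{|\nabla w|^{2p}}{w^{k}}\Big)^{(p-2)/p},\\
\frac{z|\nabla w|^{2p-2}}{w^{k-1}}&=\Big(\frac{z^p}{w^{k-p}}\Big)^{1/p}\Big(\frac{|\nabla w|^{2p}}{w^{k}}\Big)^{(p-1)/p}.
\end{align*}
By Young's inequality (with exponents $\tfrac p2,\tfrac p{p-2}$ and $p,\tfrac p{p-1}$ respectively), the first and third integrands are bounded, for any prescribed $\varepsilon'>0$, by $C_{\varepsilon'}\frac{z^p}{w^{k-p}}+\varepsilon'\frac{|\nabla w|^{2p}}{w^{k}}$. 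For the middle integral I split
\begin{align*}
\frac{z|\nabla w|^{2p-5}}{w^{k-2}}\big|\nabla|\nabla w|^2\big|
=\Big(\frac{|\nabla w|^{p-3}}{w^{(k-2)/2}}\big|\nabla|\nabla w|^2\big|\Big)\cdot\Big(\frac{z|\nabla w|^{p-2}}{w^{(k-2)/2}}\Big),
\end{align*}
so Young's inequality yields a small multiple of $\frac{|\nabla w|^{2p-6}}{w^{k-2}}\big|\nabla|\nabla w|^2\big|^2$ plus a constant multiple of $\frac{z^2|\nabla w|^{2p-4}}{w^{k-2}}$, and the latter is reduced to the first case. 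Choosing all the Young parameters small relative to the given $\varepsilon$ (the lost weight being absorbed into a large constant $M$), integrating over $\Omega$, and summing gives the asserted inequality; the hypotheses $p\ge3$ and $k\ge p$ ensure that the exponents $2p-6,\,2p-5,\,p-3$ of $|\nabla w|$ and $k-3$ of $w^{-1}$ are nonnegative, so that all the monomials above are honest nonnegative integrands.

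The only genuinely delicate point is the bookkeeping of exponents in the Young inequalities: one must verify that each monomial $z^a|\nabla w|^b w^{-c}$ arising above is exactly an interpolant of the two ``pure'' quantities $\frac{z^p}{w^{k-p}}$ and $\frac{|\nabla w|^{2p}}{w^{k}}$ (and, for the cross term, also of the third quantity $\frac{|\nabla w|^{2p-6}}{w^{k-2}}|\nabla|\nabla w|^2|^2$), which is precisely what forces the exponent sums on $z$, $|\nabla w|$ and $w$ to match. Since everything here mirrors \cite[Lemma 4.3]{HKWS}, with $u$ replaced throughout by the combined quantity $z=\nu u+\lambda v$, no new idea beyond this substitution is required.
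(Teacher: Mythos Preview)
Your proposal is correct and follows essentially the same route as the paper's proof: integrate by parts to produce the three terms corresponding to the paper's $I_1,I_2,I_3$, use $\Delta w=\mu w-z$ to eliminate the Laplacian and drop the resulting nonpositive contribution, then close each term by Young's inequality against the two ``pure'' quantities $\frac{z^p}{w^{k-p}}$ and $\frac{|\nabla w|^{2p}}{w^k}$ (and, for the cross term, the third quantity $\frac{|\nabla w|^{2p-6}}{w^{k-2}}|\nabla|\nabla w|^2|^2$). Your explicit display of the homogeneity identities is a nice touch that makes the Young-inequality bookkeeping transparent, but the underlying argument is identical to the paper's.
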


\begin{proof}
 First,  we have that
\begin{align*}
    \int_{\Omega}\frac{|\nabla w|^{2p-4}}{w^{k-2}} \nabla (\nu u +\lambda v)\cdot \nabla w
  &= \underbrace{(k-2) \int_{\Omega}\frac{(\nu u+\lambda v) |\nabla w|^{2p-2}}{w^{k-1}}}_{I_{1}}  - \underbrace{\int_{\Omega}\frac{(\nu u+\lambda v) |\nabla w|^{2p-4}}{w^{k-2}}\Delta w}_{I_{2}} \\
&\,\,  - \underbrace{ (p-2) \int_{\Omega} \frac{(\nu u+\lambda v) |\nabla w|^{2p-6}}{w^{k-2}} \nabla w \cdot \nabla |\nabla w|^2}_{I_{3}}.
\end{align*}

Next, by Young's inequality, for every $B_1>0$, there exists a positive constant $A_1=A_1(k,p,B_1)>0$ such that
\begin{align}
\label{eq-4-31}
     I_1=(k-2) \int_{\Omega}\frac{(\nu u+\lambda v)|\nabla w|^{2p-2}}{w^{k-1}}&=(k-2) \int_{\Omega} \frac{\nu u+\lambda v}{w^{\frac{k-p}{p}}} \cdot  \frac{|\nabla w|^{2p-2}}{w^{\frac{k(p-1)}{p}}}\nonumber\\
     & \leq A_1 \int_{\Omega} \frac{(\nu u+\lambda v)^p}{w^{k-p}}+ B_1 \int_{\Omega} \frac{|\nabla w|^{2p}}{w^{k}}.
\end{align}

Now, by the fact that $\Delta w =\mu w-\nu u -\lambda v$, and Young's inequality, for every $B_2>0$, there exists a positive constant $A_2=A_2(k,p,\nu,\lambda, B_2)>0$ such that
\begin{align}
    \label{eq-4-32}
    {-I_2}&=-\int_{\Omega}\frac{(\nu u+\lambda v)|\nabla w|^{2p-4}}{w^{k-2}}\Delta w \nonumber\\
&=-\mu \int_{\Omega}\frac{(\nu u+\lambda v)|\nabla w|^{2p-4}}{w^{k-3}} +  \int_{\Omega}\frac{(\nu u+\lambda v)^2|\nabla w|^{2p-4}}{w^{k-2}}  \nonumber\\
    & \leq  \int_{\Omega} \frac{(\nu u+\lambda v)^2}{w^{\frac{2(k-p)}{p}}} \cdot \frac{|\nabla w|^{2p-4}}{w^{\frac{k(p-2)}{p}}}  \nonumber\\
    & \leq A_2 \int_{\Omega} \frac{(\nu u+\lambda v)^p}{w^{k-p}}+ B_2 \int_{\Omega} \frac{|\nabla w|^{2p}}{w^{k}}.
\end{align}

Finally, for every ${\varepsilon}>0$ and $B_3>0$,  there are positive constants $A_3=A_3({\varepsilon}, k,p,\nu,B_3)>0$ and
$A_4=A_4(A_4,B_3)$  such that
\begin{align}
\label{eq-4-33}
    {-I_3}&= -(p-2) \int_{\Omega} \frac{(\nu u+\lambda v) |\nabla w|^{2p-6}}{w^{k-2}} \nabla w \cdot \nabla |\nabla w|^2\nonumber\\
     & \leq  A_3 \int_{\Omega}\frac{(\nu u+\lambda v)^2|\nabla w|^{2p-4}}{w^{k-2}} + {{ \varepsilon}} \int_{\Omega}\frac{|\nabla w|^{2p-6}}{w^{k-2}}\big |\nabla |\nabla w|^2\big|^2 \nonumber\\
    & \leq A_4 \int_{\Omega} \frac{(\nu u+\lambda v)^p}{w^{k-p}}+ B_3 \int_{\Omega} \frac{|\nabla w|^{2p}}{w^{k}}+ {{\varepsilon}} \int_{\Omega}\frac{|\nabla w|^{2p-6}}{w^{k-2}}\big |\nabla |\nabla w|^2\big|^2.
\end{align}
Combining  \eqref{eq-4-31}, \eqref{eq-4-32} and \eqref{eq-4-33}  with $B_1=B_2=B_3=\frac{1}{3} \varepsilon$ and $M=M(\varepsilon,k,p,\nu):=A_1+A_2+A_4$ completes the proof.
\end{proof}

We now prove Proposition \ref{new-main-prop2}.

\begin{proof}[Proof of Proposition \ref{new-main-prop2}]
First, by \eqref{aaux-new-eq4} and Lemmas \ref{lem-4-4} and \ref{lem-4-3},  for any $0<\varepsilon<p-\frac{3}{2}$, there are
$M,C>0$ such that
\begin{align}
\label{aaux-new-eq5}
    \left(p-\frac{3}{2}\right) \int_{\Omega} \frac{|\nabla w|^{2p-6}}{w^{k-2}}\big |\nabla |\nabla w|^2\big|^2 &\le  \varepsilon \int_{\Omega} \frac{|\nabla w|^{2p-6}}{w^{k-2}}\big |\nabla |\nabla w|^2\big|^2
+\varepsilon \int_{\Omega} \frac{|\nabla w|^{2p}}{w^k}\nonumber\\
&\quad + (k-2)\int_{\Omega} \frac{ |\nabla w|^{2p-4}}{w^{k-1}} \nabla w \cdot \nabla |\nabla w|^2 \nonumber\\
&\quad + M\int_{\Omega} \frac{(\nu u+\lambda v)^p}{w^{k-p}}+C\int_\Omega \frac{|\nabla w|^{2p-2}}{w^{k-2}}.
\end{align}

Next,  by  \eqref{aaux-new-eq1} and \eqref{aaux-new-eq5}, we have
\begin{align}
\label{aaux-new-eq6}
& \frac{p-1}{p+k-3} \int_{\Omega} \frac{ |\nabla w|^{2p-4}}{w^{k-1}} \nabla w \cdot \nabla |\nabla w|^2\nonumber\\
 &\leq  \frac{\varepsilon}{p+k-3}\int_\Omega \frac{|\nabla w|^{2p}}{w^k}+\frac{M}{p+k-3}\int_\Omega \frac{(\nu u+\lambda v)^p}{w^{k-p}}\nonumber\\
& \,\, +\frac{C}{p+k-3}\int_\Omega \frac{|\nabla w|^{2p-2}}{w^{k-2}}  + \frac{p+k-3}{4\left(p-\frac{3}{2}-\varepsilon\right)} \int_{\Omega}\frac{|\nabla w|^{2p}}{w^{k}}.
\end{align}

Now, by \eqref{aaux-new-eq00},  and \eqref{aaux-new-eq6}, for any $0<\varepsilon<p-\frac{3}{2}$, there holds
\begin{align}
\label{aaux-new-eq7}
&\Big (k-1-\frac{(p+k-3)^2}{4(p-\frac{3}{2}-\varepsilon)} -\varepsilon \Big)\int_\Omega \frac{|\nabla w|^{2p}}{w^k}\nonumber\\
&\le  M\int_\Omega \frac{(\nu u+\lambda v)^p}{w^{k-p}}+(C+\mu)\int_\Omega \frac{|\nabla w|^{2p-2}}{w^{k-2}}.
\end{align}
By Young's inequality, for any $0<\varepsilon<p-\frac{3}{2}$, there is $\tilde M(\varepsilon,p)$ such that
\begin{equation}
\label{aaux-new-eq8}
(C+
\mu) \int_\Omega \frac{|\nabla w|^{2p-2}}{w^{k-2}}\le \varepsilon \int_\Omega\frac{|\nabla w|^{2p}}{w^k}+\tilde M\int_\Omega w^{2p-k}.
\end{equation}
By \eqref{aaux-new-eq7} and \eqref{aaux-new-eq8},
 \begin{align}
\label{aaux-new-eq9}
&\Big (k-1-\frac{(p+k-3)^2}{4(p-\frac{3}{2}-\varepsilon)} -2\varepsilon \Big)\int_\Omega \frac{|\nabla w|^{2p}}{w^k}\nonumber\\
&\le  M\int_\Omega \frac{(\nu u+\lambda v)^p}{w^{k-p}}+\tilde M \int_\Omega w^{2p-k}.
\end{align}

Finally,
since $p-\sqrt{2p-3} \leq k <  p+\sqrt{2p-3}$, one can find $\varepsilon>0$ such that
\begin{equation*}
    k-1-2\varepsilon- \frac{(p+k-3)^2}{4\left(p-\frac{3}{2}-\varepsilon\right)} >0.
\end{equation*}
The proposition then follows from \eqref{aaux-new-eq9}.
\end{proof}

\end{document}